
\documentclass{amsart}


\usepackage{amsmath,amssymb,amsfonts,graphics,xcolor}
\usepackage{epsfig}
\usepackage{latexsym}
\usepackage{euscript}
\usepackage{float}

\title{Conditions on square geometric graphs}

\author[Chuangpishit et al.]{Huda Chuangpishit \and Jeannette Janssen}
\address{Department of Mathematics \& Statistics, Dalhousie University, Halifax, Nova Scotia, Canada, B3H 3J5}{
\subjclass{F.2.2 Nonnumerical Algorithms and Problems, G.2.2 Graph Theory} 
\keywords{Unit interval graph, cubicity, intersection graph, geometric graph}


\date{\today}

\newcommand{\ignore}[1]{}
\newtheorem{theorem}{Theorem}[section]
\newtheorem{corollary}[theorem]{Corollary}

\newtheorem{definition}[theorem]{Definition}

\newtheorem{assumption}[theorem]{Assumption}

\newtheorem{lemma}[theorem]{Lemma}
\newtheorem{proposition}[theorem]{Proposition}

\newcommand{\cal}{\mathcal}
\newcommand{\Rrr}{{\mathbb R}}


\def\C{{\mathcal C}}

\def\B{{\mathcal B}}
\def\A{{\mathcal A}}

\begin{document}

\begin{abstract} 
For any metric $d$ on $\Rrr^2$, an ($\Rrr^2,d$)-geometric graph is a graph whose vertices are points in $\Rrr^2$, and two vertices are adjacent if and only if their distance is at most 1.
If $d=\|.\|_{\infty}$, the metric derived from the $L_{\infty}$ norm, then $(\Rrr ^2,\|.\|_{\infty})$-geometric graphs are precisely those graphs that are the intersection of two unit interval graphs.  
We refer to $(\Rrr^2,\|.\|_{\infty})$-geometric graphs as square geometric graphs. We represent a characterization of square geometric graphs. Using this characterization we provide necessary conditions for the class of square geometric $B_{a,b}$-graphs, a generalization of cobipartite graphs.  Then by applying some restrictions on these necessary conditions we obtain sufficient conditions for $B_{a,b}$-graphs to be square geometric. 
\end{abstract}

\maketitle

\section{Introduction}
A given graph $G$ can be represented in very different layouts. Different representations of a graph have broad applications in areas such as social network analysis, graph visualization, etc. 
  
The $n$-dimensional geometric representation of a graph is a representation in which, the vertices of the graph are embedded in $\mathbb{R}^n$ equipped with an arbitrary metric $d$, and two vertices are adjacent if and only if their distance is at most 1. A graph $G$ is an ($\Rrr^n,d$)-geometric graph, if it has an $n$-dimensional geometric representation. If $d=\|.\|_{\infty}$, the metric derived from the $L_{\infty}$ norm, then $(\Rrr ^n,\|.\|_{\infty})$-geometric graphs are precisely those graphs that are the intersection of $n$ unit interval graphs. For $x=(x_1, \ldots, x_n),$ and $y=(y_1, \ldots, y_n)$ the distance of
$x$ and $y$ in $d_{\infty}$ metric is $\|x-y\|_\infty={\mbox max}_i|x_i-y_i|$.

Another way to define $(\Rrr^n,\|.\|_{\infty})$-geometric graphs is to look at it as the problem of representing a graph as the intersection graph of \emph{$n$-cubes} where an $n-$cube is the cartesian product of $n$ closed intervals of unit length of real line $\Rrr$.    
The minimum dimension of the space $\mathbb{R}^n$ for which $G$ has an $n$-cube presentation is a graph parameter called the \emph{cubicity} of a graph.
The concept of cubicity of graphs was first introduced and studied by Roberts in \cite{roberts1969}. In his paper, \cite{roberts1969}, Roberts indicates that there is a tight connection between graphs with cubicity $k$ and unit interval graphs. 

\begin{theorem}[\cite{roberts1969} ]\label{thm:Robert-char}
The cubicity of a graph $G$ is $k$, where $k$ is a positive integer, if and only if $G$ is the intersection of $k$ unit interval graphs.
\end{theorem}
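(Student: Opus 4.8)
The plan is to exploit the single structural fact that governs intersections of axis-parallel boxes: two $n$-cubes $\prod_{i=1}^{n} I^{i}_{u}$ and $\prod_{i=1}^{n} I^{i}_{v}$ meet if and only if $I^{i}_{u} \cap I^{i}_{v} \neq \emptyset$ for every coordinate $i$. This is immediate, since a point $(x_{1},\dots,x_{n})$ lies in both products exactly when $x_{i} \in I^{i}_{u} \cap I^{i}_{v}$ for each $i$, and such a point exists precisely when none of the coordinate intersections is empty. I would first state and prove this as a preliminary observation, since it is the hinge on which both implications turn.

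For the ``if'' direction, suppose $G = \bigcap_{i=1}^{k} G_{i}$ where each $G_{i}$ is a unit interval graph on the common vertex set $V(G)$. Fix a unit interval representation $v \mapsto I^{i}_{v}$ of $G_{i}$ for each $i$, and assign to each vertex $v$ the $k$-cube $C_{v} = \prod_{i=1}^{k} I^{i}_{v}$. By the box-intersection observation, $C_{u} \cap C_{v} \neq \emptyset$ iff $I^{i}_{u} \cap I^{i}_{v} \neq \emptyset$ for all $i$, i.e.\ iff $uv \in E(G_{i})$ for all $i$, i.e.\ iff $uv \in E(G)$. Hence $v \mapsto C_{v}$ is a $k$-cube representation of $G$, so the cubicity of $G$ is at most $k$.

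For the ``only if'' direction, suppose $G$ has cubicity $k$ and fix a $k$-cube representation $v \mapsto C_{v} = \prod_{i=1}^{k} I^{i}_{v}$. For each coordinate $i$, let $G_{i}$ be the graph on $V(G)$ in which $u$ and $v$ are adjacent iff $I^{i}_{u} \cap I^{i}_{v} \neq \emptyset$; the family $\{I^{i}_{v}\}_{v \in V(G)}$ is by construction a unit interval representation of $G_{i}$, so each $G_{i}$ is a unit interval graph. Applying the box-intersection observation again, $uv \in E(G)$ iff $C_{u} \cap C_{v} \neq \emptyset$ iff $I^{i}_{u} \cap I^{i}_{v} \neq \emptyset$ for every $i$ iff $uv \in \bigcap_{i=1}^{k} E(G_{i})$; thus $G$ is the intersection of the $k$ unit interval graphs $G_{1}, \dots, G_{k}$.

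Finally I would tie the two halves together: the argument above in fact shows that $G$ admits an $m$-cube representation if and only if $G$ is an intersection of $m$ unit interval graphs on $V(G)$, for every positive integer $m$, so the minimal values of the two parameters coincide; in particular a hypothetical intersection of $k-1$ unit interval graphs would force cubicity at most $k-1$. (Since a complete graph is itself a unit interval graph, adjoining one does not change an intersection, so ``intersection of $k$ unit interval graphs'' may be read as ``intersection of at most $k$''.) I do not expect a serious obstacle here; the only points needing care are bookkeeping ones --- insisting that all the $G_{i}$ share the vertex set $V(G)$, and checking that the coordinate projections of a unit cube are genuinely unit-length intervals, so that each $G_{i}$ is a bona fide unit interval graph.
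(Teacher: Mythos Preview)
The paper does not prove this theorem; it is quoted as a classical result of Roberts with a citation to \cite{roberts1969} and no argument is given. Your proposal is correct and is exactly the standard proof: the coordinate-wise box-intersection observation reduces a $k$-cube representation to $k$ independent unit interval representations and conversely, and your closing remark that the equivalence holds level-by-level (for every $m$) is precisely what is needed to conclude that the minima coincide. There is nothing in the paper to compare against beyond noting that your argument matches the original source.
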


Earlier results on cubicity study the complexity of recognition of graphs with a certain cubicity. In his paper, \cite{yannakakis}, Yannakakis shows that recognition of graphs with cubicity $k$ is NP-hard for any $k\geq 3$. Later Brue in \cite{Heinz-Brue-1996} proves that the problem of recognition of graphs with cubicity 2 in general is an NP-hard problem. As for $(\Rrr,\|.\|_{\infty})$-geometric graphs or unit interval graphs, there are several results presenting linear time algorithms for recognition of unit interval graphs. See \cite{unit-interval-1, corneil1998}. 

One of the main directions in the study of $(\Rrr^n,\|.\|_{\infty})$-geometric graphs is investigating the cubicity of specific families of graphs. In \cite{upper-cub-2016}, the authors study graphs with low chromatic number. The cubicity of interval graphs has been studied in \cite{upper-cub-interval-2009}. The cubicity of threshold graphs, bipartite graphs, and hypercube graphs has been investigated in \cite{cub-threshold-2009, cub-vertex-cover-2009, cub-certaingraphs-2005, chandran-etal-2008}. More results on cubicity of graphs can be found in \cite{cub-asteroidal-free-2010,cub-product-2015, cub-certaingraphs-2005}.

There are several characterizations of unit interval graphs. Here we state a result which characterizes unit interval graphs based on their forbidden subgraphs. 
\begin{theorem}\label{thm:unit-int-char}
\cite{golumbic}
A graph $G$ is a unit interval graph if and only if $G$ is claw-free, chordal, and asteroidal triple-free.
\end{theorem}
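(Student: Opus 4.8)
The plan is to prove the two implications separately: for the ``only if'' direction I would argue directly from a unit interval representation $\{I_v=[x_v,x_v+1]:v\in V(G)\}$, and for the ``if'' direction I would reduce the statement to two classical results.

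\emph{Necessity.} Suppose $G$ has a unit interval representation. For \emph{chordality}, given an induced cycle $C$ let $v$ be the vertex of $C$ whose interval has the leftmost right endpoint; both neighbours of $v$ on $C$ meet $I_v$ and, by the choice of $v$, must contain the right endpoint of $I_v$, hence are adjacent -- impossible if $C$ is induced of length $\ge 4$. For \emph{claw-freeness}, if a vertex $x$ were adjacent to an independent triple $\{a,b,c\}$, order the pairwise disjoint intervals so that $I_a$ lies left of $I_b$ left of $I_c$; since $I_x$ meets $I_a$ and $I_c$ it contains points $p\in I_a$ and $q\in I_c$ with $p<q$, and then $l(I_b)>r(I_a)\ge p$ and $r(I_b)<l(I_c)\le q$, so $q-p>r(I_b)-l(I_b)=1=|I_x|$, a contradiction. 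For \emph{AT-freeness}, take pairwise non-adjacent $a,b,c$ and let $b$ be the one whose interval is the middle one of the three disjoint intervals; tracing any path from $a$ to $c$, at the first step where the current interval stops lying entirely to the left of $I_b$ the corresponding vertex must contain the left endpoint of $I_b$ and therefore lies in $N[b]$, so no $a$--$c$ path avoids $N[b]$ and $\{a,b,c\}$ is not an asteroidal triple.

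\emph{Sufficiency.} Assume $G$ is claw-free, chordal, and asteroidal-triple-free. By the Lekkerkerker--Boland theorem, a graph is an interval graph if and only if it is chordal and asteroidal-triple-free, so $G$ is an interval graph. It remains to show that a claw-free interval graph is a unit interval graph, which is Roberts' theorem. I would prove this by taking an interval representation of $G$ with all $2n$ endpoints distinct and showing that if some $I_u$ is properly contained in some $I_v$, then -- choosing such a pair with $I_v$ of minimum length -- one can exhibit an induced $K_{1,3}$ with centre $v$, using a neighbour of $v$ overhanging $I_u$ on the left, one overhanging on the right, and $u$ itself, where minimality of $|I_v|$ and general position of the endpoints force these three to be pairwise non-adjacent. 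Hence $G$ admits a \emph{proper} interval representation, and a proper representation can be rescaled to a unit representation by the standard argument (sort the left endpoints, then shift each interval so that consecutive left endpoints differ by a common small amount), which changes no adjacencies.

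The main obstacle is this last reduction ``claw-free interval $\Rightarrow$ proper interval $\Rightarrow$ unit interval.'' The verifications from a unit model are routine and Lekkerkerker--Boland is quotable, but turning a proper containment $I_u\subsetneq I_v$ into a \emph{genuinely induced} claw requires care: one must ensure the two overhanging neighbours of $v$ are non-adjacent to each other and to $u$, which is exactly where the minimality of $|I_v|$ and the distinctness of endpoints are used, and one must also check that the rescaling step neither creates nor destroys an edge.
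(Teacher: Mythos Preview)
The paper does not prove this theorem; it merely cites it from \cite{golumbic}. So there is no ``paper's own proof'' to compare against, and your task is really to give a self-contained proof. Your necessity direction is fine, and invoking Lekkerkerker--Boland for ``chordal $+$ AT-free $\Rightarrow$ interval'' is standard. The genuine problems are in the last two reductions.

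\textbf{Claw-free interval $\Rightarrow$ proper interval.} Your argument fixes an arbitrary interval model with distinct endpoints, takes a containment $I_u\subsetneq I_v$ with $|I_v|$ minimal, and then asserts the existence of neighbours of $v$ overhanging $I_u$ on each side. Those overhanging neighbours need not exist. For instance, $K_2$ admits the model $I_u=[0.4,0.6]$, $I_v=[0,1]$; or take $P_3$ with $I_a=[0.9,1.1]$, $I_b=[0,2]$, $I_c=[1.9,2.1]$. Both graphs are claw-free, both models have strict containments, and in neither case can you build the claimed $K_{1,3}$. The point is that a claw-free interval graph may well have \emph{some} models with containments; what you must show is that it has \emph{at least one} model without them. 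So the argument cannot proceed by deriving a contradiction from a single, arbitrarily chosen model. You need either to transform the given model (shrinking $I_v$ or stretching $I_u$ when no overhanging neighbour obstructs, and iterating), or to use an ordering/clique-path characterisation to build a proper model directly.

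\textbf{Proper $\Rightarrow$ unit.} The sketch ``sort the left endpoints, then shift each interval so that consecutive left endpoints differ by a common small amount'' does not produce unit intervals, and it is not clear it preserves adjacencies: making left-endpoint gaps uniform says nothing about where the right endpoints land. The correct observation is that in a proper model the left-endpoint order and the right-endpoint order coincide; one then constructs new endpoints inductively so that each interval has length~$1$ while the relative order of all $2n$ endpoints is preserved (this is the Bogart--West/Gardi argument). Your write-up should make that order-preservation explicit, since that is exactly what guarantees no edge is created or destroyed.
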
 

The class of graphs we will look into in this paper is the class of {\em binate interval graphs}.

\begin{definition}\label{def:binate-interval}
A {\emph binate interval graph} is a graph whose vertex set can be partitioned into two sets $U$ and $W$ such that the graphs induced by $U$ and $W$ are connected unit interval graphs. 
\end{definition}

We are interested in studying this class of graphs mainly because of its structure, that is two unit interval graphs and some edges between them. Therefore, the binate interval graphs can be seen as a model of interaction between two unit interval graphs. Since unit interval graphs have broad applications in practical problems studying binate interval graphs may find its application in future.
Our aim here is to take the first steps towards studying  $(\Rrr^2,\|.\|_{\infty})$-geometric binate interval graphs.
For the sake of simplicity, we refer to $(\Rrr^2,\|.\|_{\infty})$-geometric graphs as square geometric graphs.
In this paper, we study a subclass of binate interval graphs, called {\sl $\B_{a,b}$-graphs}. 
\begin{definition}\label{def:B{a,b}}
A $\B_{a,b}$-graph is a binate interval graph  whose vertex set can be partitioned into two sets  $X_a\cup X_b$ and $Y$, where $X_a$, $X_b$, $Y$ are cliques and $X_a\cap X_b\neq\emptyset$. 
\end{definition}
Since unit interval graphs have a natural representation as a sequence of cliques, studying this special class, ${\cal B}_{a,b}$-graph, will definitely provides some insight into the problem of recognition of square geometric binate interval graphs.
Our approach to study square geometric graphs is inspired by the following characterization of unit interval graphs. 
\begin{theorem}\label{thm:conseq-1s-property}
\cite{olariu-looges}
A graph $G$ is a unit interval graph if and only if there is an ordering $<$ on the vertex set of G such that for any $u,v,z\in V(G)$ we have 
\begin{equation*}\label{eq:uiq-order}
u<z<v\quad \mbox{and\quad} u\sim v \Rightarrow u\sim z \quad\mbox{and}\quad v\sim z
\end{equation*} 
\end{theorem}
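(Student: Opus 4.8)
The plan is to prove both directions, with the forward direction (unit interval graph $\Rightarrow$ the ordering exists) being the more straightforward one and the converse being the step that needs care. For the forward direction, I would start from an interval representation in which every interval has the same length, say $[\ell_v, \ell_v + 1]$ for each vertex $v$; such a representation exists by definition of a unit interval graph. I would then define the ordering $<$ by declaring $u < v$ whenever $\ell_u < \ell_v$ (breaking ties arbitrarily, or by appealing to the fact that one may perturb the left endpoints to be distinct without changing the graph). Now suppose $u < z < v$ and $u \sim v$. Then $\ell_u \le \ell_z \le \ell_v$ and the intervals of $u$ and $v$ intersect, so $\ell_v \le \ell_u + 1$. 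Combining these, $\ell_u \le \ell_z \le \ell_v \le \ell_u + 1$, which forces the interval of $z$ to meet the interval of $u$, and symmetrically $\ell_z \ge \ell_u \ge \ell_v - 1$ gives that the interval of $z$ meets that of $v$. Hence $u \sim z$ and $v \sim z$, as required.

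For the converse, assume we have an ordering $<$ with the stated property; I want to produce a unit interval representation, and the cleanest route is to verify that $G$ is claw-free, chordal, and asteroidal-triple-free, then invoke Theorem \ref{thm:unit-int-char}. Claw-freeness: if $a$ is adjacent to three pairwise non-adjacent vertices $x, y, z$, order these three, say $x < y < z$; since $a$ is adjacent to both $x$ and $z$ but $x \not\sim z$, the vertex $a$ cannot lie strictly between them (that would force $x \sim z$ by the property applied with $a$ in the middle — wait, the property is stated the other way, so I must be careful about which vertex plays which role). The correct argument is: consider the positions of $a$ relative to $x, y, z$ in the ordering and in each of the few cases derive a contradiction using the property; for instance if $x < y < a$ then from $x \sim a$ we get $x \sim y$, contradiction. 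Chordality: given a chordless cycle of length $\ge 4$, take its $<$-minimum vertex $w$; its two cycle-neighbors $u, v$ satisfy $w < u$ and $w < v$, and walking along the cycle one reaches a vertex on the far side, producing a configuration $w < \cdot < \cdot$ with an edge "over" $w$ that the property turns into a chord. Asteroidal-triple-freeness: an asteroidal triple would similarly be obstructed by looking at the extreme vertex of the triple in the ordering and the path avoiding its neighborhood.

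The main obstacle I anticipate is the converse, and specifically making the case analyses for chordality and the absence of asteroidal triples clean rather than a sprawl of subcases; the property as stated is a one-directional implication (it promises adjacency, never non-adjacency), so every forbidden-subgraph argument must be set up so that the contradiction takes the form "the property forces an edge that the forbidden subgraph says is absent." An alternative to the forbidden-subgraph route would be a direct construction: process the vertices in the order $<$ and greedily assign left endpoints, arguing inductively that the property guarantees the neighborhood of each new vertex among the already-placed vertices is a contiguous block of the most recent ones, so a consistent unit-length assignment can be maintained. I expect the forbidden-subgraph approach to be shorter to write, but the direct construction is more self-contained; I would present whichever keeps the case analysis smallest, most likely the forbidden-subgraph one given that Theorem \ref{thm:unit-int-char} is already available.
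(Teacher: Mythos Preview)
The paper does not prove this theorem: it is quoted from \cite{olariu-looges} and used as a black box (most visibly in the proof of Theorem~\ref{thm:dim-2-cond-geom}), so there is no ``paper's own proof'' to compare your proposal against.

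That said, your outline is essentially sound. The forward direction is the standard left-endpoint argument and is fine as written. For the converse via Theorem~\ref{thm:unit-int-char}, each of the three forbidden-subgraph checks can be made short once you pick the right pivot in the ordering. Claw-freeness: with leaves $x<y<z$ and center $a$, split on whether $a<y$ or $y<a$; in the first case $a<y<z$ with $a\sim z$ forces $y\sim z$, in the second $x<y<a$ with $x\sim a$ forces $x\sim y$. Chordality: if $w$ is the $<$-minimum of a chordless cycle of length $\ge 4$ with cycle-neighbours $u,v$ and, say, $w<u<v$, then $w\sim v$ forces $u\sim v$, a chord. Asteroidal-triple-freeness is the one you left vaguest, but it is the easiest: for a triple $a<b<c$, any $a$--$c$ path must contain an edge $pq$ with $p<b<q$, and then the property makes $p$ (and $q$) adjacent to $b$, so no such path can avoid $N(b)$. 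Your alternative ``direct construction'' is also viable and is closer in spirit to how the original reference argues, but since Theorem~\ref{thm:unit-int-char} is already available in this paper, the forbidden-subgraph route is the more economical choice here.
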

In Section \ref{chapter3:Square Geometric-B-a,b}, we present a characterization of square geometric graphs based on the existence of two linear orders on the vertex set of $G$. Then in Section \ref{sec:nec-codistion} we use this ordering characterization of square geometric graphs to provide necessary conditions for square geometric $B_{a,b}$-graphs. These necessary conditions may not be sufficient conditions. But in Section \ref{sec:chapterp3-sufficiency}, by adding some restrictions to the necessary conditions we obtain sufficient conditions for a $B_{a,b}$-graph to be square geometric.  

\section{Square Geometric ${\cal B}_{a,b}-$graphs}
\label{chapter3:Square Geometric-B-a,b}

In this section, we first present a characterization of square geometric graphs. Then using this characterization we investigate the properties of square geometric $B_{a,b}$-graphs. 

\subsection{A characterization of square geometric graphs} 
Motivated by the characterization of unit interval graphs introduced in Theorem \ref{thm:unit-int-char}, we present the following characterization of square geometric graphs.
\begin{theorem}\label{thm:dim-2-cond-geom}
A graph $G$ is a square geometric graph if and only if there exist two linear orderings $<_1$ and $<_2$ on the vertex set of $G$ such that for every $u, v, x, y, a, b \in V(G)$, 
\begin{eqnarray}\label{eq:2-dim-geo}
\begin{array}{lllll}
u<_1 a <_1 v &\mbox{and}& u<_1 b <_1 v, &\mbox{and}& \quad u\sim v\\
x<_2 a <_2 y &\mbox{and}& x<_2 b <_2 y, &\mbox{and}& \quad x\sim y
\end{array}
\Rightarrow a\sim b
\end{eqnarray}
\end{theorem}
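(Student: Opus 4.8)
First, the forward direction. The plan is to start from a square geometric representation, i.e., an embedding $f: V(G) \to \Rrr^2$ with $f(u) = (f_1(u), f_2(u))$ such that $u \sim v$ iff $\|f(u) - f(v)\|_\infty \le 1$, which by definition means $|f_1(u) - f_1(v)| \le 1$ and $|f_2(u) - f_2(v)| \le 1$. I would define $<_1$ to be the order induced by the first coordinates $f_1$ (breaking ties arbitrarily, or perturbing so all coordinates are distinct) and $<_2$ the order induced by $f_2$. To verify \eqref{eq:2-dim-geo}, suppose the hypotheses hold for $u,v,x,y,a,b$. From $u <_1 a <_1 v$ and $u <_1 b <_1 v$ we get $f_1(a), f_1(b) \in [f_1(u), f_1(v)]$, and since $u \sim v$ gives $|f_1(u) - f_1(v)| \le 1$, the interval has length at most $1$, hence $|f_1(a) - f_1(b)| \le 1$. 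Symmetrically, the second line of hypotheses together with $x \sim y$ gives $|f_2(a) - f_2(b)| \le 1$. Therefore $\|f(a) - f(b)\|_\infty \le 1$, so $a \sim b$.

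Second, the converse. Given orders $<_1, <_2$ satisfying \eqref{eq:2-dim-geo}, I want to build a representation. The natural idea is to recall (via Theorem~\ref{thm:Robert-char}, or directly via Theorem~\ref{thm:conseq-1s-property}) that it suffices to exhibit two unit interval graphs $G_1, G_2$ with $G = G_1 \cap G_2$, where $G_i$ has $V(G_i) = V(G)$ and $E(G) = E(G_1) \cap E(G_2)$. I would define $G_1$ on the vertex set ordered by $<_1$ by putting $u \sim_{G_1} v$ (for $u <_1 v$) exactly when every vertex strictly between $u$ and $v$ in $<_1$ is... — more precisely, I would set $E(G_1) = E(G) \cup \{uv : \exists\, u', v' \text{ with } u' \le_1 u <_1 v \le_1 v',\ u' \sim v'\}$, i.e. take the "interval closure" of $E(G)$ with respect to $<_1$: $uv \in E(G_1)$ iff the closed $<_1$-interval $[u,v]$ is contained in some closed $<_1$-interval spanned by an edge of $G$. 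Define $G_2$ analogously with $<_2$. Each $G_i$ is then a unit interval graph: its edge set has the property that if $u <_i z <_i v$ and $uv \in E(G_i)$ then $uz, zv \in E(G_i)$, which is exactly the condition of Theorem~\ref{thm:conseq-1s-property}. It remains to check $E(G_1) \cap E(G_2) = E(G)$. The inclusion $\supseteq$ is immediate. For $\subseteq$: if $ab \in E(G_1) \cap E(G_2)$, then there are $u,v$ with $u \le_1 a, b \le_1 v$ and $u \sim v$, and $x,y$ with $x \le_2 a, b \le_2 y$ and $x \sim y$; feeding these into \eqref{eq:2-dim-geo} (after noting the non-strict inequalities can be handled, with the degenerate cases $a=b$ trivial) yields $a \sim b$, i.e. $ab \in E(G)$.

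The main obstacle I expect is the converse direction, specifically getting the bookkeeping of the "interval closure" exactly right so that (a) $G_i$ genuinely satisfies the hypothesis of Theorem~\ref{thm:conseq-1s-property} and (b) the hypotheses of \eqref{eq:2-dim-geo} apply verbatim when intersecting $E(G_1)$ and $E(G_2)$. The subtlety is in the boundary cases of the quantifiers: \eqref{eq:2-dim-geo} is stated with strict inequalities and with the "spanning" pair possibly equal to or distinct from $a,b$, so I need to make sure the closure is defined so that $ab \in E(G_1)$ produces a witnessing pair $u,v$ with $u <_1 a <_1 v$ and $u <_1 b <_1 v$ (WLOG $a \ne b$; and when $a$ or $b$ coincides with an endpoint, reflexivity-type adjacency $u \sim u$ is vacuous, so one uses that $u \sim v$ with $u = a$ forces looking one step further, or simply observes $a \sim v$ already lies in $E(G)$). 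A clean way to sidestep this is to first reduce to the case where all six vertices are distinct and then argue the small remaining cases separately. I would also double-check that $G_i$ contains all of $V(G)$ as vertices and that isolated-type vertices cause no trouble, since Theorem~\ref{thm:conseq-1s-property} is about the whole vertex set.
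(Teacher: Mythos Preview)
Your plan is exactly the paper's: forward direction via the coordinate-induced orders, converse via forming the interval closure $G_i$ of $G$ under $<_i$, verifying each $G_i$ is a unit interval graph by Theorem~\ref{thm:conseq-1s-property}, and concluding with Theorem~\ref{thm:Robert-char}. Your closure is in fact a bit cleaner than the paper's, which literally only adds the edges from a between-vertex to the two endpoints of a spanning $G$-edge; taken at face value that does not yield the consecutive-ones property, whereas your full closure does.

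That said, the boundary issue you flag is not cosmetic but a real gap, and neither of your suggested patches closes it. Take $G=K_{1,5}$ with center $c$ and leaves $l_1,\ldots,l_5$, and set $<_1:\,l_1,l_2,c,l_3,l_4,l_5$ and $<_2:\,l_3,l_4,c,l_1,l_2,l_5$. The only unordered pair of distinct vertices lying strictly between the endpoints of a $G$-edge in $<_1$ is $\{l_3,l_4\}$ (inside $cl_5$), and in $<_2$ it is $\{l_1,l_2\}$; since these differ, \eqref{eq:2-dim-geo} holds vacuously for every $a\neq b$. But $K_{1,5}$ has cubicity $3$ and is not square geometric; correspondingly, in your construction $E(G_1)\cap E(G_2)$ picks up the non-edges $l_1l_2$ and $l_3l_4$, so $G_1\cap G_2\neq G$. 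The paper's argument slips at precisely the same spot: its assertion that ``for all $1\le i\le 2$ there exist $u,v$ adjacent with $u<_i a<_i v$ and $u<_i b<_i v$'' is unjustified when $a$ or $b$ coincides with an endpoint of the witnessing edge. The statement becomes correct, and both your argument and the paper's go through verbatim, if \eqref{eq:2-dim-geo} is read with non-strict inequalities $u\le_1 a\le_1 v$, etc.; so the defect is in the formulation rather than in your strategy.
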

\begin{proof}
Suppose that $G$ is a square geometric graph. By definition, there exists an embedding of $G$ in $\mathbb{R}^2$ such that two vertices $u, v$ of $G$ are adjacent if and only if $\|{u-v}\|_\infty\leq 1$. Define $<_i$, $1\leq i\leq 2$, to be the ordering of vertices based on the increasing order of their coordinates in the $i$-th dimension, respectively. It is clear that $<_i$, $1\leq i\leq 2$, satisfy the condition mentioned in the statement of the theorem. More precisely, let $a,b\in V(G)$, and $a=(a_1,a_2)$ and $b=(b_1,b_2)$. Suppose that we have the following:
\begin{eqnarray}\label{eq:2}
\begin{array}{lllll}
u<_1 a <_1 v &\mbox{and}& u<_1 b <_1 v, &\mbox{and}& \quad u\sim v\\
x<_2 a <_2 y &\mbox{and}& x<_2 b <_2 y, &\mbox{and}& \quad x\sim y
\end{array}
\end{eqnarray}
We prove that for all $1\leq i\leq 2$, we have that $|a_i-b_i|\leq 1$, which implies that $\|a-b\|_{\infty}\leq 1$, and thus $a\sim b$.
Let $i=1$, and suppose that $u$ and $v$ are the vertices corresponding to $<_1$ in Equation \ref{eq:2}.
%
%
Let $u=(u_1,u_2)$ and $v=(v_1,v_2)$. 
Since $u\sim v$, we have that $\|u-v\|_{\infty}\leq 1$, and thus $|u_i-v_i|\leq 1$ for all $1\leq i\leq 2$. By definition of $<_1$, we have that $u_1< a_1 < v_1$ and $u_1< b_1< v_1$ in the first dimension. This implies that $|a_1-b_1|\leq 1$. A similar discussion for $i=2$ shows that $|a_2-b_2|\leq 1$, and thus $\|a-b\|_{\infty}\leq 1$. So $a\sim b$.

Now suppose that $G$ is a graph with linear orderings $<_i$, $1\leq i\leq 2$, which satisfy Equation \ref{eq:2-dim-geo}.
For all $<_i$, $1\leq i\leq 2$, we construct a corresponding set $E_i$ as follows. If $v_r, v_s\in V(G) $ such that $v_r\sim v_s$ and $v_r<_i v_s$, then for any $v_t\in V(G)$ such that $v_r<_i v_t<_i v_s$, we add edges $v_tv_r$ and $v_tv_s$ to $E_i$.

Now define $G_i$, $1\leq i\leq 2$, to be the graph with vertex set $V(G_i)=V(G)$, and edge set $E(G_i)=E(G)\cup E_i$.
For all $1\leq i\leq 2$, the linear order $<_i$ on vertices $V(G_i)$ satisfies Equation \ref{eq:uiq-order}. Then, by Theorem \ref{thm:conseq-1s-property}, we have that, for all $1\leq i\leq 2$, the graph $G_i$ is a unit interval graph. 
Now suppose that $ab$ is an edge in $E_1\cup E_2$. This implies that for all $1\leq i\leq 2$ there exist vertices $u, v\in V(G)$ such that $u$ and $v$ are adjacent, and moreover $u<_i a<_i v$ and $u<_i b<_i v$. Since linear orderings $<_i$, $1\leq i\leq 2$, satisfy Equation \ref{eq:2-dim-geo} then $ab\in E(G)$.
This implies that $E(G_1)\cap E(G_2)= E(G)$. Therefore, $G=G_1\cap G_2$. Since all $G_i$, $1\leq i\leq 2$, are unit interval graphs, then by Theorem \ref{thm:Robert-char} we have that $G$ is square geometric.
\end{proof}
Given the two orderings $<_1$ and $<_2$ as in Definition \ref{def:completion}, how can we say if they satisfy Equation \ref{eq:2-dim-geo}? In what follows, we will address this question.
 
\begin{definition}\label{def:completion}
Let $G$ be a square geometric graph with linear orders $<_1$ and $<_2$ as in Theorem \ref{thm:dim-2-cond-geom}. Define
$$E_i=\{wz |\exists u,v\in V(G),  u<_i w<_i v  \quad\mbox{and }\quad  u<_i z<_i v \quad\mbox{and }\quad u\sim v\}.$$
The \emph{completion} of $<_i$, denoted by $\C_i$, is $(E(G))^c\cap E_i$. Indeed $\C_i$ is the set of the non-edges of $G$ whose ends are in between two adjacent vertices in $<_i$. 
\end{definition}
Note that the completions $\C_1$ and $\C_2$ of Definition \ref{def:completion} are subsets of the set of non-edges of $G$. 
The following lemma shows the relation between linear orders of Theorem \ref{thm:dim-2-cond-geom}, $<_1$ and $<_2$, and the completions of Definition \ref{def:completion}, $\C_1$ and $\C_2$.
%
\begin{lemma}\label{lem:emptycompletion-general}
Let $G$ be a square geometric graph, and let $<_1$ and $<_2$ be linear orders on the vertex set of $G$. Then $<_1$ and $<_2$ satisfy Equation (\ref{eq:2-dim-geo}) if and only if $\C_1$ and $\C_2$, the completions of  $<_1$ and $<_2$ respectively, have empty intersection.
\end{lemma}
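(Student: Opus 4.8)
The plan is to unwind the definitions and show that the two statements—"$<_1$ and $<_2$ satisfy Equation~(\ref{eq:2-dim-geo})" and "$\C_1 \cap \C_2 = \emptyset$"—are contrapositives of each other, essentially verbatim. The key observation is that the edge sets $E_1, E_2$ appearing in Definition~\ref{def:completion} are exactly the sets $E_i$ constructed in the proof of Theorem~\ref{thm:dim-2-cond-geom}: a pair $\{w,z\}$ lies in $E_i$ precisely when there exist adjacent $u \sim v$ with $u <_i w <_i v$ and $u <_i z <_i v$. So for a pair of distinct vertices $a, b$, the hypothesis of the implication in (\ref{eq:2-dim-geo})—namely that witnesses $u,v$ exist for $<_1$ and witnesses $x,y$ exist for $<_2$—says precisely that $\{a,b\} \in E_1$ and $\{a,b\} \in E_2$, i.e. $\{a,b\} \in E_1 \cap E_2$.

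First I would prove the forward direction by contraposition: assume $\C_1 \cap \C_2 \neq \emptyset$ and produce a violation of (\ref{eq:2-dim-geo}). Pick a pair $\{a,b\} \in \C_1 \cap \C_2$. By definition of $\C_i$ as $(E(G))^c \cap E_i$, the pair $ab$ is a non-edge of $G$, and $\{a,b\} \in E_1$ and $\{a,b\} \in E_2$. Membership in $E_1$ supplies vertices $u, v$ with $u \sim v$, $u <_1 a <_1 v$, $u <_1 b <_1 v$; membership in $E_2$ supplies vertices $x, y$ with $x \sim y$, $x <_2 a <_2 y$, $x <_2 b <_2 y$. These are exactly the hypotheses on the left side of (\ref{eq:2-dim-geo}), yet $a \not\sim b$, so the implication fails. (One should note $a \neq b$ here, since a non-edge has distinct endpoints, so it makes sense to speak of $a \sim b$ failing.)

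Conversely, assume (\ref{eq:2-dim-geo}) fails, so there exist $u,v,x,y,a,b$ with $u <_1 a <_1 v$, $u <_1 b <_1 v$, $u \sim v$, $x <_2 a <_2 y$, $x <_2 b <_2 y$, $x \sim y$, but $a \not\sim b$. Then $a \neq b$ (adjacent-or-not is only asserted to fail for a genuine pair; and in any case $a=b$ would make $a \sim b$ vacuous/trivial under the convention, so a real violation forces $a \neq b$). The first line of witnesses shows $\{a,b\} \in E_1$, the second shows $\{a,b\} \in E_2$, and $a \not\sim b$ means $\{a,b\} \in (E(G))^c$. Hence $\{a,b\} \in ((E(G))^c \cap E_1) \cap ((E(G))^c \cap E_2) = \C_1 \cap \C_2$, so the intersection is nonempty.

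I do not expect a genuine obstacle here; the lemma is a direct translation between the quantifier form of (\ref{eq:2-dim-geo}) and the set-theoretic form of the completions. The only point requiring a word of care is the handling of the degenerate case $a = b$ and, relatedly, whether the "witnesses" $u, v$ in the definition of $E_i$ are required to be distinct from $a, b$—but since $\C_i \subseteq (E(G))^c$ consists of non-edges and we only ever invoke pairs $\{a,b\}$ that are non-edges (hence $a \neq b$), this causes no trouble. The rest is bookkeeping: matching the roles of $(u,v)$ with the first line and $(x,y)$ with the second, and observing $E_1, E_2$ of Definition~\ref{def:completion} coincide with the $E_i$ built in Theorem~\ref{thm:dim-2-cond-geom}.
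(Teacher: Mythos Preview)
Your proposal is correct and matches the paper's proof essentially line for line: both directions are handled by unwinding Definition~\ref{def:completion} and observing that a common element of $\C_1\cap\C_2$ is exactly a counterexample to Equation~(\ref{eq:2-dim-geo}). The only cosmetic difference is that the paper phrases the forward direction as a proof by contradiction rather than contraposition.
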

\begin{proof}
First suppose $<_1$ and $<_2$ satisfy Equation (\ref{eq:2-dim-geo}). By contradiction suppose $w,z\in V(G)$, and $wz\in \C_1\cap \C_2$. Then by Definition \ref{def:completion}, there are $u, v, x, y\in V(G)$ such that 
\begin{eqnarray}
\begin{array}{lllll}
u<_1 w <_1 v &\mbox{and}& u<_1 z <_1 v,&\mbox{and}& \quad u\sim v\\
x<_2 w <_2 y &\mbox{and}& x<_2 z <_2 y,&\mbox{and}& \quad x\sim y
\end{array}
\end{eqnarray}

Since $<_1$ and $<_2$ satisfy Equation (\ref{eq:2-dim-geo}), we have $w\sim z$. This contradicts the fact that $\C_1$ and $\C_2$ are subsets of non-edges of $G$. Therefore, $\C_1\cap \C_2=\emptyset$. 

Now suppose that $<_1$ and $<_2$ do not satisfy Equation (\ref{eq:2-dim-geo}). This implies that there are $u, v, x, y, w, z\in V(G)$ such that $w\nsim z$, and
\begin{eqnarray}
\begin{array}{lllll}
u<_1 w <_1 v &\mbox{and}& u<_1 z <_1 v,&\mbox{and}& \quad u\sim v\\
x<_2 w <_2 y &\mbox{and}& x<_2 z <_2 y,&\mbox{and}& \quad x\sim y
\end{array}
\end{eqnarray}

Then by the definition of completions (Definition \ref{def:completion}) we have that $wz\in\C_1$ and $wz\in\C_2$.  
 Therefore, $\C_1\cap\C_2\neq\emptyset$.

\end{proof}

\subsection{Square geometric $B_{a,b}$-graphs}
\label{subsec:Square-geo-B-a,b}
We now collect the properties of a square geometric $B_{a,b}$-graph. Let us start with the following definition.

\begin{definition}\label{def:rigid-pair}
Let $G$ be a $B_{a,b}$-graph and $x_1y_1,x_2y_2$ are two edges of $G$ with $x_1, x_2\in Xa\cup X_b$ and $y_1, y_2\in Y$. Then $\{x_1y_1,x_2y_2\}$ is called a {\sl rigid pair} of $G$ if $x_1y_1y_2x_2$ is an induced $4$-cycle of $G$. Moreover the non-edges $x_1y_2$ and $x_2y_1$ are called the {\sl chords} of the rigid pair $\{x_1y_1,x_2y_2\}$.
\end{definition}
Note that by Theorem \ref{thm:conseq-1s-property} we know that an induced 4-cycle is a forbidden subgraph of a unit interval graph. 
\begin{proposition}\label{prop:completion-empty}
Let $G$ be a square geometric $B_{a,b}$-graph with linear orders $<_1, <_2$ as in Equation (\ref{eq:2-dim-geo}). Then every completion  $\C_i$, $i\in\{1,2\}$ contains exactly one chord of any rigid pair.
\end{proposition}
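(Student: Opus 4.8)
The plan is to analyze a rigid pair $\{x_1y_1, x_2y_2\}$ with induced $4$-cycle $x_1 y_1 y_2 x_2$ and chords $x_1y_2$, $x_2y_1$, using the two linear orders $<_1,<_2$ satisfying Equation~(\ref{eq:2-dim-geo}). The two key facts I would exploit are: first, Lemma~\ref{lem:emptycompletion-general}, which tells us $\C_1 \cap \C_2 = \emptyset$; and second, that each of $X_a \cup X_b$ and $Y$ induces a unit interval graph, so the restriction of each $<_i$ to these sets is (up to the completion-edge argument of Theorem~\ref{thm:dim-2-cond-geom}) compatible with a unit interval ordering. The claim has two halves: (a) every $\C_i$ contains \emph{at least} one chord of the rigid pair, and (b) no $\C_i$ contains \emph{both} chords.

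For half (b), suppose some $\C_i$, say $\C_1$, contained both chords $x_1y_2$ and $x_2y_1$. I would argue this is impossible purely from the order structure: consider the positions of $x_1, x_2, y_1, y_2$ in $<_1$. If $x_1y_2 \in \C_1$ then there are adjacent $u,v$ with $u <_1 x_1, y_2 <_1 v$; similarly for $x_2 y_1$. The edge $x_1 y_1$ itself forces one of $x_1,y_1$ between $x_2$ and $y_2$ or vice versa in a way that, combined with $x_1 y_1 \in E(G)$, $x_2 y_2 \in E(G)$, and the two non-edges, produces a configuration where Equation~(\ref{eq:2-dim-geo}) (applied to the real edge $x_1y_1$ or $x_2y_2$ as the "$u\sim v$" witness) forces a missing edge of the induced $4$-cycle — contradiction. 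The cleanest route is probably a short case analysis on the relative $<_1$-order of the four vertices, eliminating all orderings in which both chords are "spanned" by adjacent pairs while the $4$-cycle stays induced.

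For half (a), the point is that at least one chord lies in $\C_1 \cup \C_2$ as a matter of geometry, and then by (b) and the disjointness $\C_1 \cap \C_2 = \emptyset$ the two chords must be split one to each completion. I would get "at least one chord in $\C_1\cup\C_2$" from the geometric representation directly: place the four vertices as points in $\Rrr^2$; since $x_1 \sim y_1$ and $x_2 \sim y_2$ but $x_1 \not\sim y_2$ and $x_2\not\sim y_1$, in at least one coordinate the four projected points must interleave so that one of the non-edge pairs is sandwiched between an adjacent pair. Concretely, look at the first coordinates: if neither chord is sandwiched in dimension $1$, I would show the interleaving pattern is forced into dimension $2$. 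Then combining: each chord, being a non-edge with a "sandwiching" adjacent pair in \emph{some} dimension, lies in the corresponding $\C_i$; by (b) a single $\C_i$ cannot absorb both, so one goes to $\C_1$ and the other to $\C_2$, giving exactly one chord in each.

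The main obstacle I anticipate is half (a) — showing each chord genuinely lands in \emph{some} completion, i.e. that for each chord there really is an adjacent pair sandwiching it in at least one dimension; this requires carefully using that $x_1y_1x_2y_2$ is a $4$-cycle (so we have two edges to serve as witnesses) together with the $B_{a,b}$ structure (the $x$'s lie in a clique, the $y$'s lie in a clique, and the cliques $X_a \cup X_b$ and $Y$ each have an interval-order structure). The interaction of the two orders — making sure the \emph{same} chord doesn't end up needing to be in both $\C_1$ and $\C_2$, which Lemma~\ref{lem:emptycompletion-general} forbids — is what pins down "exactly one" rather than "at least one."
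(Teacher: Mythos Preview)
Your decomposition into (a) and (b) is reasonable in spirit, but both halves have genuine gaps, and the paper's route is actually simpler than the one you sketch.

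\textbf{On (b).} You propose to show directly, by a case analysis on $<_1$ alone, that $\C_1$ cannot contain both chords. This will not work. Take the ordering $y_1 <_1 x_1 <_1 x_2 <_1 y_2$. Since $y_1,y_2\in Y$ we have $y_1\sim y_2$, and this edge witnesses that both non-edges $x_1y_2$ and $x_2y_1$ lie in $\C_1$. Nothing about the induced $4$-cycle or Equation~(\ref{eq:2-dim-geo}) restricted to $<_1$ is violated. So ``both chords in $\C_1$'' is perfectly consistent with $<_1$ by itself; it can only be excluded by bringing in $<_2$ and the disjointness $\C_1\cap\C_2=\emptyset$. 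But to exploit that you must first know that $\C_2$ also contains at least one chord --- which is exactly the statement you were hoping to \emph{derive} from (b), not feed into it.

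\textbf{On (a).} You appeal to the geometric embedding (``place the four vertices as points in $\Rrr^2$ \ldots look at the first coordinates''). But the hypothesis gives you abstract linear orders $<_1,<_2$ satisfying Equation~(\ref{eq:2-dim-geo}), not coordinate orders of some fixed embedding; the completions $\C_i$ are defined from those abstract orders. There is no reason an arbitrary pair $<_1,<_2$ satisfying~(\ref{eq:2-dim-geo}) arises as the coordinate orders of a single embedding, so the geometric picture does not tell you anything about $\C_i$.

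\textbf{What the paper does instead.} The paper proves in one stroke the stronger statement ``each $\C_i$ contains \emph{at least one} chord,'' purely combinatorially. Fix $i$ and assume $x_1<_i x_2$ (they are adjacent, both in $X_a\cup X_b$). A three-case split on where $y_1$ and $y_2$ sit --- (i) $x_1<_i y_1$, (ii) $y_2<_i x_2$, (iii) neither, i.e.\ $y_1<_i x_1<_i x_2<_i y_2$ --- uses the edges $x_1x_2$, $x_1y_1$, $x_2y_2$, $y_1y_2$ as sandwiching witnesses to put $x_2y_1$, $x_1y_2$, or both into $\C_i$. Once you know each of $\C_1,\C_2$ contains at least one of the two chords and $\C_1\cap\C_2=\emptyset$, ``exactly one in each'' (and in particular your (b)) follows immediately; the case (iii) where both land in a single $\C_i$ is ruled out a posteriori. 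This is the missing key step in your plan: show ``$\geq 1$ chord in each $\C_i$'' directly from the single order $<_i$ and the clique edges $x_1x_2$, $y_1y_2$, rather than trying to prove (b) first.
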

\begin{proof}
Suppose $G$ is a square geometric $B_{a,b}$-graph with linear orders $<_1, <_2$ satisfying Equation (\ref{eq:2-dim-geo}). By Lemma \ref{lem:emptycompletion-general}, we know that $\C_1\cap \C_2=\emptyset$. This implies that a chord of a rigid pair belongs to at most one of the completions $\C_i$, $i=1,2$.
We now show that a chord of a rigid pair belongs to either $\C_1$ or $\C_2$.
Let $\{x_1y_1,x_2y_2\}$ be a rigid pair of $G$. Without loss of generality let $x_1<_1 x_2$. Using the fact that Equation (\ref{eq:2-dim-geo}) holds for $<_1$ and $<_2$, and $x_1\sim y_1$ we have:
\begin{itemize}
\item If $x_1<_1 y_1$, then either $x_1<_1 y_1 <_1 x_2$ or $x_1<_1 x_2 <_1 y_1$. Thus $x_2y_1\in \C_1$.
\item If $y_2<_1 x_2$, then either $y_2<_1 x_1 <_1 x_2$ or $x_1<_1 y_2 <_1 x_2$. Thus $x_1y_2\in \C_1$.
\item If neither $x_1<_1 y_1$ nor $y_2<_1 x_2$, then we have $y_1<_1 x_1<_1 x_2<_1 y_2$.  This implies that $x_1y_2\in \C_1$ and $x_2y_1\in \C_1$.
\end{itemize}
Therefore $\C_1$ includes at least one chord of $\{x_1y_1,x_2y_2\}$. A similar discussion for $\C_2$ proves that $\C_2$ includes at least one chord of $\{x_1y_1,x_2y_2\}$. Note that since $\C_1$ and $\C_2$ contains  at least one chord of $\{x_1y_1,x_2y_2\}$ and $\C_1\cap\C_2=\emptyset$ then the third case never occurs.
\end{proof}
We know by Proposition \ref{prop:completion-empty} that the completions $\C_1$ and $\C_2$ provide a bipartion of the non-edges of $G$ which are chords of some rigid pairs of $G$. To study these non-edges we define a graph associated with $G$. 
\begin{definition}\label{def:chord-graph}
Let $G$ be a $B_{a,b}$-graph with clique bipartition $X=X_a\cup X_b$ and $Y$. 
The {\sl chord graph} of $G$, denoted by $\tilde{G}$ is defined as follows. 
$$V(\tilde{G})= \{xy | x\in X, y\in Y, xy\not\in E\}.$$
Two vertices of $\tilde{G}$ are adjacent if and only if they are the missing chords of an induced $4$-cycle of $G$, namely

$$E(\tilde{G})= \{uv | u=xy, v=x'y', xy'\hspace{.2cm}\mbox{and}\hspace{.2cm} x'y \hspace{.2cm} \mbox{are in }E\}. $$
\end{definition}
The vertex set of the chord graph, $\tilde{G}$, as in Definition \ref{def:chord-graph} is the set of non-edges of $G$. From now on we may use a ``vertex of $\tilde{G}$'' and  a ``non-edge of $G$'' interchangeably.  For clarity, we denote the adjacency in graph $\tilde{G}$ by $\sim ^*$. 
Note that by Definition \ref{def:chord-graph}, two vertices of $\tilde{G}$ are adjacent if and only if they are chords of a rigid pair. Therefore a non-edge of $G$ is either a chord of a rigid pair or an isolated vertex of $\tilde{G}$. Proposition \ref{prop:completion-empty} shows that the set of non-isolated vertices of $\tilde{G}$ is a subset of $\C_1\cup\C_2$, and two adjacent vertices of $\tilde{G}$ belong to different completions $\C_1$ and $\C_2$. This provides us with a bipartition  for the set of non-isolated vertices of $\tilde{G}$. The following corollary is an immediate consequence of this bipartition which presents a necessary condition for a $B_{a,b}$-graph to be square geometric.
\begin{corollary}\label{cor:coB-necessary-cond}
Let $G$ be a square geometric $B_{a,b}$-graph. Then its chord graph $\tilde{G}$ is bipartite.
\end{corollary}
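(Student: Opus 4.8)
The plan is to derive this directly from Proposition~\ref{prop:completion-empty} together with the structural observations recorded right after Definition~\ref{def:chord-graph}. Since $G$ is square geometric, fix linear orders $<_1,<_2$ on $V(G)$ satisfying Equation~(\ref{eq:2-dim-geo}), and let $\C_1,\C_2$ be their completions. By Lemma~\ref{lem:emptycompletion-general} we have $\C_1\cap\C_2=\emptyset$. By Proposition~\ref{prop:completion-empty}, each completion contains exactly one chord of every rigid pair; in particular, for every rigid pair $\{x_1y_1,x_2y_2\}$, one of its two chords lies in $\C_1$ and the other lies in $\C_2$.

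First I would observe that the edges of $\tilde G$ are, by Definition~\ref{def:chord-graph}, precisely the pairs of chords of rigid pairs: if $u\sim^* v$ in $\tilde G$, then $u=xy$, $v=x'y'$ with $xy'$ and $x'y$ edges of $G$, so $\{xy',x'y\}$ is a rigid pair (an induced $4$-cycle $x\,y'\,x'\,y$) and $u,v$ are exactly its two chords. Hence by the previous paragraph, $u$ and $v$ lie in different completions, one in $\C_1$ and one in $\C_2$. This means that the map assigning to each non-isolated vertex of $\tilde G$ the completion ($\C_1$ or $\C_2$) containing it is a proper $2$-coloring of the non-isolated part of $\tilde G$: adjacent vertices always get different colors, and this is well-defined because $\C_1\cap\C_2=\emptyset$ (so no vertex receives both colors). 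The isolated vertices of $\tilde G$ can be colored arbitrarily, which does not affect bipartiteness. Therefore $\tilde G$ is bipartite.

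I do not anticipate a serious obstacle here, since all the work has been done in Proposition~\ref{prop:completion-empty}: the only thing to check carefully is the correspondence between edges of $\tilde G$ and rigid pairs, i.e.\ that an edge $uv$ of $\tilde G$ forces $u,v$ to be exactly the two chords of a single rigid pair (so that Proposition~\ref{prop:completion-empty} applies to them). One minor point to state explicitly is that a chord of a rigid pair is indeed a non-edge of $G$ (Definition~\ref{def:rigid-pair}), so it is a genuine vertex of $\tilde G$, and that every non-edge lying on an edge of $\tilde G$ is non-isolated and hence covered by the coloring. Putting these together gives a proper $2$-coloring of $\tilde G$, which is the definition of bipartite.

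\begin{proof}
Since $G$ is square geometric, by Theorem~\ref{thm:dim-2-cond-geom} there exist linear orders $<_1,<_2$ on $V(G)$ satisfying Equation~(\ref{eq:2-dim-geo}). Let $\C_1,\C_2$ be the completions of $<_1,<_2$ as in Definition~\ref{def:completion}. By Lemma~\ref{lem:emptycompletion-general}, $\C_1\cap\C_2=\emptyset$.

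We $2$-color the vertices of $\tilde G$ as follows. If a vertex (non-edge of $G$) lies in $\C_1$, color it with color $1$; if it lies in $\C_2$, color it with color $2$; since $\C_1\cap\C_2=\emptyset$, at most one of these applies. Any vertex of $\tilde G$ in neither $\C_1$ nor $\C_2$ is colored with color $1$.

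We claim this is a proper coloring. Let $u\sim^* v$ be an edge of $\tilde G$, say $u=xy$ and $v=x'y'$ with $xy',x'y\in E(G)$. Then $x\,y'\,x'\,y$ is an induced $4$-cycle of $G$, so $\{xy',x'y\}$ is a rigid pair of $G$ and $u=xy$, $v=x'y'$ are its two chords. By Proposition~\ref{prop:completion-empty}, each of $\C_1$ and $\C_2$ contains exactly one chord of this rigid pair; since $\C_1\cap\C_2=\emptyset$, one of $u,v$ lies in $\C_1\setminus\C_2$ and the other in $\C_2\setminus\C_1$. Hence $u$ and $v$ receive different colors.

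Thus $\tilde G$ admits a proper $2$-coloring, so $\tilde G$ is bipartite.
\end{proof}
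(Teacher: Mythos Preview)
Your argument is correct and follows exactly the reasoning the paper sketches in the paragraph preceding the corollary: use Proposition~\ref{prop:completion-empty} to see that the two chords of any rigid pair land in different completions, and then read this as a proper $2$-coloring of the non-isolated part of $\tilde G$. One cosmetic slip: the induced $4$-cycle witnessing the rigid pair should be written $x\,y'\,y\,x'$ (so that the edges are $xy'$, $y'y$, $yx'$, $x'x$ and the missing chords are $xy$ and $x'y'$), not $x\,y'\,x'\,y$; and you are implicitly using $x\sim x'$, which is part of the ``induced $4$-cycle'' description in Definition~\ref{def:chord-graph}. These are labeling issues only and do not affect the argument.
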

\section{Necessary Conditions}
\label{sec:nec-codistion}
We saw in Subsection \ref{subsec:Square-geo-B-a,b} that a necessary condition for a $B_{a,b}$-graph $G$ to be square geometric is that its chord graph $\tilde{G}$ is bipartite. In this section, we will present more necessary conditions targeting the structure of the graph $G$ as well as its chord graph. 

Let $G$ be a $B_{a,b}$-graph with clique bipartition $X, Y$, where $X=X_a\cup X_b$, and $X_a$, $X_b$ and $Y$ are cliques. A vertex $v\in\tilde{G}$ is called an {\sl $a$-vertex} if there there exists $y\in Y$ such that $v=ay$ for $a\in X_a\setminus X_b$. For 
$b\in X_b\setminus X_a$, a {\sl $b$-vertex} is defined similarly.

\begin{assumption}\label{ass:1}
Let $G$ be a $\B_{a,b}$-graph with a connected chord graph, and clique bipartition $X_a\cup X_b$ and $Y$. Let $X_a\setminus X_b=\{a_1,\ldots, a_s\}$ and $X_b\setminus X_a=\{b_1,\ldots, b_r\}$.
Suppose that for all $v$ in $X_a\setminus X_b$ and $X_b\setminus X_a$ we have $0<|N_Y(v)|<|Y|$. Also, for all $b\in X_b\setminus X_a$, suppose that there exists $u\in X_a\cup X_b$ such that $N_Y(b)\not\subseteq N_Y(u)$. We assume $E(\tilde{G})$ has an edge such that none of its ends is an $a$-vertex or a $b$-vertex.
\end{assumption}

The reason we study the $B_{a,b}$-graphs whose chords graphs are connected, is that part of our methods are based on specific properties of some proper 2-colorings of the chord graphs. A disconnected chord graph $\tilde{G}$ has $2^c$ possible colorings, where $c$ is the umber of components of $\tilde{G}$. The process of searching among $2^c$ possible colorings to obtain the coloring which satisfies the required properties is challenging and needs more complicated discussions.     

Moreover, in Assumption \ref{ass:1}, we exclude some particular structures of a $\B_{a,b}$-graph $G$. 
All of the excluded cases of Assumption \ref{ass:1} have a simple-structured chord graph $\tilde{G}$ which makes dealing with these cases easier. However, the proofs for the excluded cases are slightly different from the proofs of the general $B_{a,b}$-graphs. So in this paper we focus on the general cases of Assumption \ref{ass:1}.

Throughout this section we assume that $G$ is a square geometric $B_{a,b}$-graph.
We assume that there are linear orders $<_1$ and $<_2$ for a $B_{a,b}$-graph $G$ which satisfy Equation (\ref{eq:2-dim-geo}) {\sl i.e.} $\C_1\cap\C_2=\emptyset$. This implies that the non-edges of $G$ belong to at most one completion $\C_1$ and $\C_2$. We use this fact to collect some necessary conditions on the structure of the graph $G$. The non-edges of a $B_{a,b}$-graph can be partitioned into the following classes: (1) The isolated vertices of $\tilde{G}$, (2) The non-isolated vertices of $\tilde{G}$, and (3) The non-edges of form $ab$ where $a\in X_a\setminus X_b$ and $b\in X_b\setminus X_a$. 

The isolated vertices of $\tilde{G}$ force no restriction on the structure of the graph $G$ as they are the non-edges that can be dealt with when defining the linear orders $<_1$ and $<_2$. 
We already saw in Corollary \ref{cor:coB-necessary-cond} that the non-edges of class (2) or the non-isolated vertices of $\tilde{G}$ force the chord graph to be bipartite. So for the rest of this section we will study the restrictions caused by the non-edges of class (3). 
There are some specific structures of the neighborhoods of vertices $a\in X_a\setminus X_b$ and $b\in X_b\setminus X_a$ which force the non-edges of part (3) to belong to both completions $\C_1$ and $\C_2$ for any two linear orders $<_1$ and $<_2$. In what follows, we introduce such forbidden structures called {\sl rigid-free conditions}. 
\begin{definition}\label{def:rigid-free}
Let $G$ be a $\B_{a,b}$-graph with bipartition $X_a\cup X_b$ and $Y$. Assume $v\in X_a\cup X_b$ and $S\subseteq X_a\cup X_b$. Then $v$ is called \emph{rigid-free with respect to $S$} if there is no rigid pair $\{x_1y_1,x_2y_2\}$ of $G$ with $\{x_1, x_2\}\subseteq S$ and $y_1, y_2\in N_Y(v)$. 
\end{definition}

\begin{definition}\label{def:rigid-free-conditions}
[Rigid-free conditions]
Let $G$ be as in Assumption \ref{ass:1}. Then the {\sl rigid-free conditions} are as follow. Either of the following statements is true.
\begin{itemize}
\item[(i)] For all $a\in X_a$, $a$ is rigid-free with respect to the sets $\{x_1,x_2\}$ and $\{x,b\}$, where $x, x_1, x_2\in X_a\cap X_b$ and $b\in X_b\setminus X_a$.
\item[(ii)] For all $b\in X_b\setminus X_a$, $b$ is rigid-free with respect to the sets $\{x_1,x_2\}$ and $\{x,a\}$, where $x, x_1, x_2\in X_a\cap X_b$ and $a\in X_a\setminus X_b$.

\end{itemize} 
\end{definition}
We will prove, in Subsection \ref{subsec:chap3:Nec-rigid-free}, that if a graph $G$, as given in Assumption \ref{ass:1}, is square geometric, then the rigid-free condition of Definition \ref{def:rigid-free-conditions} hold. 

Besides the rigid free conditions there is a coloring condition which must be satisfied if $G$ is a $B_{a,b}$-graph as in Assumption \ref{ass:1} and is square geometric. In sequel, we give an insight into this necessary coloring condition. 
%
%
%
%
Let $\mathcal{A}$ be the set of all non-isolated $a$-vertices which have a neighbor in $V(\tilde{G})$ that is not an $a$-vertex. Similarly, let $\mathcal{B}$ be the set of all $b$-vertices which have a neighbor in $V(\tilde{G})$ that is not a $b$-vertex. 
%

A necessary condition for a $B_{a,b}$-graph $G$ to be square geometric is that there exists a 2-coloring of $\tilde{G}$ such that $\mathcal{A}$ is a subset of one color class, and $\mathcal{B}$ is the subset of the other color class. 

The following theorem presents necessary conditions for a graph $G$ as in Assumption \ref{ass:1} to be square geometric.

\begin{theorem}\label{thm:chp3-main-result}
Let $G$ be a square geometric $B_{a,b}$-graph as given in Assumption \ref{ass:1}. Then the following conditions are satisfied:
\begin{itemize}
\item[(i)] There is a proper 2-coloring $f:V(\tilde{G})\rightarrow\{\mbox{red, blue}\}$ which colors all vertices of $\mathcal{A}$ red and all vertices of $\mathcal{B}$ blue.
\item[(ii)] The rigid-free conditions of Definition \ref{def:rigid-free-conditions} hold. 
\end{itemize} 
\end{theorem}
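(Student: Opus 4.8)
The plan is to assemble the theorem from the two facts promised in the following subsections, after first recording the combinatorial data that squareness supplies. Since $G$ is square geometric, fix once and for all linear orders $<_1$ and $<_2$ witnessing this, so that by Lemma \ref{lem:emptycompletion-general} the completions satisfy $\C_1\cap\C_2=\emptyset$, and by Proposition \ref{prop:completion-empty} every rigid pair has exactly one chord in $\C_1$ and the other in $\C_2$. Because every non-isolated vertex of $\tilde G$ is a chord of some rigid pair and two adjacent vertices of $\tilde G$ are the two chords of a common rigid pair, the assignment ``$\C_1\mapsto$ red, $\C_2\mapsto$ blue'' is a proper $2$-colouring of $\tilde G$ (this is the content behind Corollary \ref{cor:coB-necessary-cond}); as $\tilde G$ is connected by Assumption \ref{ass:1}, this is, up to swapping the two colour names, the \emph{only} proper $2$-colouring. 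Hence part (i) reduces to a single claim: after possibly interchanging the roles of $<_1$ and $<_2$, every vertex of $\mathcal A$ lies in $\C_1$ and every vertex of $\mathcal B$ lies in $\C_2$.

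For part (i) the heart of the matter is therefore the ``consistency'' statement that all $a$-vertices of $\mathcal A$ lie in one common completion, all $b$-vertices of $\mathcal B$ lie in one common completion, and these two completions are distinct. To prove it I would analyse, in each order $<_i$, the relative positions of the cliques $X_a\cap X_b$ and $Y$ and of a fixed $a\in X_a\setminus X_b$. In the unit interval completion $G_i$ constructed in the proof of Theorem \ref{thm:dim-2-cond-geom}, the interval left-endpoints of each clique occupy a window of length $1$; since $a$ is adjacent in $G$ to all of $X_a\cap X_b$ but, by Assumption \ref{ass:1}, to some but not all of $Y$, it is pinned to one side of the $Y$-window, and — because the ``core'' rigid pair guaranteed by Assumption \ref{ass:1} has both chords between $X_a\cap X_b$ and $Y$ — the two windows are placed consistently in $<_1$ and in $<_2$, so the side is the same for every $a\in X_a\setminus X_b$. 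Given that pinning, for $v=ay\in\mathcal A$ one writes the rigid pair $\{ay',x'y\}$ whose other chord $x'y'$ is a non-$a$-vertex (so $x'\in X_b$), and reads off from the case analysis of Proposition \ref{prop:completion-empty} which completion $v$ is forced into; it is the one determined by the side of $a$, hence the same for all of $\mathcal A$. The argument for $\mathcal B$ is symmetric, and $\mathcal A$, $\mathcal B$ land in opposite completions because otherwise the core non-edges between $X_a\cap X_b$ and $Y$ would be driven into $\C_1\cap\C_2$.

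For part (ii) I would argue by contradiction: suppose neither alternative (i) nor (ii) of the rigid-free conditions of Definition \ref{def:rigid-free-conditions} holds. Failure of (i) yields some $a\in X_a$ together with a rigid pair $\{x_1y_1,x_2y_2\}$ with $\{x_1,x_2\}$ of the form $\{x_1,x_2\}\subseteq X_a\cap X_b$ or $\{x,b\}$ and $y_1,y_2\in N_Y(a)$; failure of (ii) yields a mirror configuration around some $b\in X_b\setminus X_a$. Feeding both configurations into the same ordering/inequality bookkeeping used for Proposition \ref{prop:completion-empty} — forcing the positions of $a$, $b$, the $x$'s, and the $y$'s in $<_1$ and in $<_2$ — produces a non-edge that is sandwiched between two adjacent vertices in $<_1$ and also between two adjacent vertices in $<_2$, i.e.\ a member of $\C_1\cap\C_2$, contradicting Lemma \ref{lem:emptycompletion-general}. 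Hence at least one of the two rigid-free alternatives holds.

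The step I expect to be the genuine obstacle is the pinning/consistency claim inside part (i): showing that every vertex of $X_a\setminus X_b$ sits on the \emph{same} side of the $Y$-block in both orders (and dually for $X_b\setminus X_a$), since a priori the $Y$-non-neighbours of different $a$'s could be separated in different coordinates. This is exactly where Assumption \ref{ass:1} must be used in full force — the nontriviality $0<|N_Y(v)|<|Y|$, the non-domination of the $b$'s, and in particular the existence of a rigid pair none of whose chords is an $a$- or $b$-vertex — and it is the part demanding the most careful case analysis.
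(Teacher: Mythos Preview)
Your overall framework is correct and matches the paper: colour the non-isolated vertices of $\tilde G$ by the completion they fall in, prove $\mathcal A$ and $\mathcal B$ land consistently in opposite completions, and for (ii) show that simultaneous failure of both rigid-free alternatives forces some non-edge $ab$ into $\C_1\cap\C_2$. Part (ii) in the paper is carried out exactly along your lines, by pairing the four possible witnessing rigid pairs (those with $y_1,y_2\in N_Y(b)$ against those with $y_1',y_2'\in N_Y(a)$) and showing each combination traps $a$ (or $b$) between two common neighbours in both orders.

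The gap is exactly where you flag it, and your proposed ``window/pinning'' mechanism is neither how the paper proceeds nor, as stated, sufficient. The paper does not argue that every $a$ sits on one side of a $Y$-block anchored by a single core rigid pair. It works entirely through rigid-pair manipulations. First (Lemma \ref{lem:help-mega}(2)), for a \emph{fixed} $a$, any two rigid pairs $\{ay_1,x_2y_2\}$ and $\{ay_1',x_2'y_2'\}$ put $ay_2,ay_2'$ in the same completion, since otherwise $x_2'<_i a<_i x_2$ in both $i$ forces $ab\in\C_1\cap\C_2$. Second, for \emph{distinct} $a_1,a_2$ whose $\mathcal A$-vertices sit in opposite completions, Lemma \ref{lem:help-mega-2} shows that for each $b$ either every $xy$ with $y\in N_Y(b)\setminus N_Y(x)$ lies in a fixed $\C_i$, or every $by$ with $y\notin N_Y(b)$ does; running this in both orders forces $N_Y(b)=Y$ or $N_Y(b)\subseteq N_Y(x)$ for all $x$, both excluded by Assumption \ref{ass:1}. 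Third, $\mathcal A$ and $\mathcal B$ are separated not by the core rigid pair but as follows: if $ay\in\mathcal A$ and $by'\in\mathcal B$ lay in the same $\C_i$, Lemma \ref{lem:help-mega}(1) gives $ab\in\C_i$, and a position analysis in $<_{3-i}$ then either yields $a<_{3-i} X_a\cap X_b<_{3-i} b$ together with $a<_{3-i} Y<_{3-i} b$ (so by Lemma \ref{lem:help-mega}(3) every edge of $\tilde G$ meets an $a$- or $b$-vertex, excluded by Assumption \ref{ass:1}) or traps $b$ between two $Y$-vertices and forces $by'\in\C_{3-i}$. Your sketch names the right hypotheses from Assumption \ref{ass:1} but does not supply these rigid-pair arguments, which are where the work actually lies; the geometric picture of interval windows does not by itself rule out different $a$'s being separated from their $Y$-non-neighbours in different coordinates.
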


We will see, in Section \ref{sec:chp3:main-algorithm}, that the conditions of Theorem \ref{thm:chp3-main-result} can be checked in polynomial-time.
For the rest of this section, we assume that a graph $G$ is as in the following assumption.
\begin{assumption}\label{ass:nec-condition}
Let $G$ be a $B_{a,b}$-graph as given in Assumption \ref{ass:1}. Suppose $G$ is square geometric with linear orders $<_1$ and $<_2$ as in Equation (\ref{eq:2-dim-geo}). Let $\C_1$ and $\C_2$ be completions of $<_1$ and $<_2$, respectively. 
\end{assumption}
\subsection{Properties of completions $\C_1$ and $\C_2$}
\label{subsec:chp3-completion-properties}
In this subsection, we collect some properties of completions $\C_1$ and $\C_2$ of Assumption \ref{ass:nec-condition}. These properties will be used to prove Theorem \ref{thm:chp3-main-result}.
The following lemma is an easy consequence of Definition \ref{def:completion} (definition of completions). However the lemma is very useful, as the results of the lemma will be used to a great extent in future proofs. 
\begin{lemma}\label{lem:completion-properties}
Let $G$ be a square geometric graph with linear orders $<_1$ and $<_2$ and corresponding completions $\C_1$ and $\C_2$. Then the following statements hold for all $i\in\{1,2\}$. 
\begin{itemize}
\item[(1)] Let $u, v, w, z\in V(G)$ be such that $w\notin N(z)$ and $u, v\in N(z)$. If $u<_i w<_i v$ then $zw\in\C_i$.
\item[(2)] Let $u, v, w\in V(G)$ be such that $w\in N(v)\setminus N(u)$. If $u<_i v$ and $wu\notin\C_i$ then $u<_i w$. Similarly if $v<_i u$ and $wu\notin\C_i$ then $w<_i u$.
\item[(3)] Let $G$ be a $B_{a,b}$-graph. Suppose $x\in X_a\cup X_b$ and $y_1, y_2\in Y$. If $y_1<_i x<_i y_2$ then for all $y\in Y\setminus N_Y(x)$ we have $xy\in\C_i$. Similarly, let $y\in Y$ and $x_1, x_2\in X_a\cap X_b$. If $x_1<_i y<_i x_2$ then for all $x\in (X_a\cap X_b)\setminus N_X(y)$ we have $xy\in\C_i$.
\item[(4)] Let $u_1, u_2, w, z\in V(G)$ be such that $w\notin N(z)$, $u_1\in N(z)$ and $u_2\in N(w)$. If $u_1<_i w$ and $u_2<_i z$ then $zw\in\C_i$. Similarly, if $w<_i u_1$ and $z<_i u_2$ then $zw\in\C_i$.
\item[(5)] Let $u_1, u_2, v_1, v_2, w, z\in V(G)$ be such that $w\notin N(z)$, $u_1\in N(u_2)$ and $v_1\in N(v_2)$. If $u_1<_i w<_i v_2$ and $v_1<_i z<_i u_2$ then $zw\in\C_i$. 
\end{itemize}
\end{lemma}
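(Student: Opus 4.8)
The plan is to prove each of the five statements directly from Definition~\ref{def:completion}, the definition of the completion $\C_i$. In every case the strategy is the same: exhibit a pair of $<_i$-comparable, adjacent ``witness'' vertices $u,v$ (possibly in the roles already named) such that both endpoints of the candidate non-edge lie strictly between $u$ and $v$ in $<_i$, then invoke the defining formula for $E_i$ and the hypothesis that the candidate pair is a non-edge to conclude membership in $\C_i = (E(G))^c \cap E_i$.

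For (1): the hypothesis $u,v\in N(z)$ means $z\sim u$ and $z\sim v$; since $<_i$ is a linear order and $u<_i w<_i v$, exactly one of $u<_i z$, $z<_i w$ configurations arises, but in all cases $w$ and $z$ both lie strictly between the $<_i$-smaller and $<_i$-larger of the adjacent pair chosen among $\{u,z\}$ or $\{z,v\}$ — so $zw\in E_i$, and since $w\notin N(z)$ we get $zw\in\C_i$. Statement (3) is simply the specialization of (1) to the $B_{a,b}$ setting, taking $z=x$, $w=y$ and the two $Y$-neighbours (resp.\ two $X_a\cap X_b$-vertices) flanking $x$ (resp.\ flanking $y$) as the witnesses $u,v$; I would dispatch it in one line by citing (1). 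For (4): here the witnesses are $u_1$ (adjacent to $z$) and $u_2$ (adjacent to $w$); if $u_1<_i w$ and $u_2<_i z$, then $z$ lies $<_i$-above $u_2\in N(w)$ and $w$ lies $<_i$-above $u_1\in N(z)$, so each of $z,w$ sits between an adjacent pair with the other endpoint on the far side — more carefully, WLOG $z<_i w$, then $u_2<_i z<_i w$ with $u_2\sim w$ gives $zw\in E_i$ via the pair $(u_2,w)$; symmetrically if $w<_i z$ use $(u_1,z)$. The mirror-image inequalities are handled by reversing $<_i$. For (5): with $u_1\sim u_2$ and $v_1\sim v_2$, the hypotheses $u_1<_i w<_i v_2$ and $v_1<_i z<_i u_2$ are designed so that $\{w,z\}$ can be squeezed between one of these two adjacent pairs; assuming WLOG $w<_i z$, combine $u_1<_i w$ with $z<_i u_2$ to get $u_1<_i w<_i z<_i u_2$, and since $u_1\sim u_2$ this yields $wz\in E_i$, hence $zw\in\C_i$ as $w\notin N(z)$. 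If instead $z<_i w$, use $v_1<_i z<_i w<_i v_2$ with $v_1\sim v_2$.

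Statement (2) is the only one not of ``membership in $\C_i$'' form; it is a contrapositive-style consequence of (1). Suppose $u<_i v$, $w\in N(v)\setminus N(u)$, and, for contradiction, $w<_i u$ (the case $w=u$ is impossible since $w\notin N(u)$ while a vertex is not its own non-neighbour in this context — or is simply excluded by $<_i$ being strict). Then $w<_i u<_i v$ with $v\sim w$ and... wait, we need two neighbours of $w$ straddling $u$; instead apply (1) with the roles $z:=u$ (so $u\notin N(w)$ is the non-edge $uw$), and neighbours of $u$: but we only know one neighbour relation. The cleaner route: from $w<_i u<_i v$, $v\sim u$? No. Let me restate — apply (1) taking the non-edge to be $uw$, with $z=u$; we need $u',u''\in N(u)$ with $u'<_i w<_i u''$, which we do not have. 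So instead I will use (1) in the form with $z=w$: we need two neighbours of $w$ flanking $u$. We have $v\in N(w)$ and $v>_i u$. If also $w<_i u$ then I need a neighbour of $w$ that is $<_i u$; there may be none. Hence the correct argument is: if $w<_i u$, consider the pair $(w,v)$ — it is an adjacent pair with $w<_i u<_i v$, so by the definition of $E_i$, $uw\in E_i$; but $uw\notin E(G)$, so $uw\in\C_i$, contradicting $wu\notin\C_i$. Therefore $u<_i w$. The second half (with $v<_i u$) follows by the symmetric argument using the pair $(v,w)$ under $<_i$, or by reversing the order.

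The whole lemma is routine bookkeeping with linear orders; the only mild obstacle is keeping the roles of the various $u_j,v_j$ straight and treating each order-reversal case, particularly in (4) and (5) where two symmetric hypothesis-clauses are bundled into one statement. No deeper idea is needed beyond the observation that $\C_i$ records exactly the non-edges ``trapped'' beneath some edge in $<_i$.
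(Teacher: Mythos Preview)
Your proposal is correct and aligns with the paper, which in fact omits the proof entirely, calling the lemma ``an easy consequence of Definition~\ref{def:completion}''; your case analyses for (1), (2), (4), (5) are exactly the intended routine verifications. One small slip: in your reduction of (3) to (1) the roles of $z$ and $w$ are reversed---take $w=x$ and $z=y$, so that the flanking vertices $y_1,y_2$ (which straddle $x$, not $y$) are neighbours of $z=y$ via the clique $Y$, and then (1) applies directly.
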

%
%
%
%
%
3
Let $<_1$ and $<_2$ be linear orders of a square geometric graph. Suppose $S\subseteq V(G)$ and $v\in V(G)$. For any $i\in\{1,2\}$, we denote the statement \lq\lq for all $s\in S$ we have $v<_i s$ '' by $v<_i S$.
\begin{lemma}\label{lem:x_1<x_2iffy_1<y_2}
Let $G$ be a $B_{a,b}$-square geometric graph with linear orders $<_1$ and $<_2$, and corresponding completions $\C_1$ and $\C_2$. Let $\{xy,x'y'\}$ be a rigid pair of $G$ and $x'y\in \C_1$.
\begin{itemize}
\item[(1)] If $x<_1 x'$ then either $x<_1 x'<_1 y<_1 y'$ or $x<_1 y<_1 x'<_1 y'$. Moreover for any $u\in N(y')$ we have $x<_1 u$, and for any $v\in N(x)$ we have $v<_1 y'$. In particular, $x<_1 Y$ and $(X_a\cap X_b)<_1 y'$.
\item[(2)] If $x'<_1 x$ then either $y'<_1 y<_1 x'<_1 x$ or $y'<_1 x'<_1 y<_1 x$. Moreover for any $u\in N(y')$ we have $u<_1 x$, and for any $v\in N(x)$ we have $y'<_1 v$. In particular, $Y<_1 x$ and $y'<_1 (X_a\cap X_b)$.
\item[(3)] $x<_1 x'$ if and only if $y<_1 y'$.
\end{itemize}
If $x'y\in\C_2$ then statements (1)-(3) hold if we replace $<_1$ by $<_2$.
\end{lemma}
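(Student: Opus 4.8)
The plan is to work throughout with the hypothesis $x'y\in\C_1$ (the case $x'y\in\C_2$ being verbatim identical with subscripts changed), and to use the basic completion toolkit of Lemma \ref{lem:completion-properties} repeatedly. Since $\{xy,x'y'\}$ is a rigid pair, $xy,x'y'\in E(G)$ while $xy',x'y\notin E(G)$, and $x'y\in\C_1$ means there exist adjacent $p,q\in V(G)$ with $p<_1 x'<_1 q$ and $p<_1 y<_1 q$; but I expect the cleaner route is not to unpack the witnesses of $x'y\in\C_1$ directly, but rather to use it together with $\C_1\cap\C_2=\emptyset$ — since $x'y$ and $xy'$ are the two chords of the rigid pair, Proposition \ref{prop:completion-empty} forces $xy'\in\C_2$ and $xy'\notin\C_1$. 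This non-membership $xy'\notin\C_1$ is the main lever: it says $y'$ does not lie $<_1$-between $x$ and any neighbour of $x$ in a way that would put $xy'$ in $\C_1$, and symmetrically for $x$ relative to $y'$. Concretely, applying Lemma \ref{lem:completion-properties}(2) to the pair $\{x,y'\}$ (noting $x\in N(y)\setminus N(y')$-type configurations, or more directly $y\in N(x)$, etc.) will pin down the relative $<_1$-order of $x,x',y,y'$.

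For part (1), assume $x<_1 x'$. First I would show $y$ cannot come before $x$ and $y'$ cannot come after $x'$: if $y<_1 x<_1 x'$ then since $x'\sim y'$ with $x'\notin N(y)$... hmm, rather: apply part (3)'s eventual symmetry, or argue directly. The clean argument: since $xy'\notin\C_1$ and $x\sim y$ with $y'\notin N(x)$, Lemma \ref{lem:completion-properties}(1) (contrapositive) forbids $y'$ from lying strictly $<_1$-between two neighbours of $x$; combined with $x<_1 x'$ and $x'\sim y'$, one deduces the ordering of the four points falls into exactly the two stated patterns $x<_1 x'<_1 y<_1 y'$ or $x<_1 y<_1 x'<_1 y'$ (the excluded patterns each create a $\C_1$-membership of $xy'$ or a contradiction with $x'y\in\C_1$). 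Once the four-point order is known, the "moreover" clauses follow quickly: for $u\in N(y')$, if $u<_1 x$ then $u<_1 x<_1 y'$ with... wait, I want $u$ between two neighbours — use Lemma \ref{lem:completion-properties}(4) with the non-edge $xy'$, neighbour $y$ of $x$ and neighbour $u$ of $y'$: if $u<_1 x$ then together with $y<_1 y'$ (already shown $x<_1 y$, so in particular a neighbour of $x$, namely $y$, satisfies $x<_1 y$) one gets $xy'\in\C_1$, contradiction; hence $x<_1 u$. Symmetrically for $v\in N(x)$: if $y'<_1 v$ fails, i.e. $v<_1 y'$, combine with $x<_1 x'$ (neighbour $x'$ of $y'$) via Lemma \ref{lem:completion-properties}(4)/(5) to force $xy'\in\C_1$. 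Specializing $u$ over $Y$ (every $Y$-vertex adjacent to some vertex, in particular $y'\in Y$ so... actually $x<_1 Y$ needs $x<_1 y''$ for all $y''\in Y$: take $u=y''$, but $y''$ need not be a neighbour of $y'$) — here I would instead use that $y'\in N(x')$ wait no. I'd use: for $y''\in Y$, $y''\sim y'$ since $Y$ is a clique, so $y''\in N(y')$ and the "moreover" gives $x<_1 y''$; thus $x<_1 Y$. Likewise for $x''\in X_a\cap X_b$, $x''\sim x$ (since $X_a\cap X_b\subseteq N(x)$ as $X$ is a clique — wait, $X=X_a\cup X_b$ need not be a clique; but $x\in X_a\cup X_b$ and $x''\in X_a\cap X_b$, and each of $X_a,X_b$ is a clique, so $x''\sim x$), so $x''\in N(x)$ and the "moreover" gives $x''<_1 y'$; thus $(X_a\cap X_b)<_1 y'$.

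Part (2), the case $x'<_1 x$, is the mirror image of part (1) under reversing $<_1$ (which swaps the roles $x\leftrightarrow x'$, $y\leftrightarrow y'$ while preserving "is a rigid pair" and "$x'y\in\C_1$" up to the symmetry $x'y$ vs $xy'$ — I should be slightly careful that reversal sends the chord $x'y$ to the chord $xy'$, so I'd phrase it as: the reversed order $<_1^{\mathrm{rev}}$ has completion $\C_1$ still, the same rigid pair, and now $xy'\in\C_1$ plays the role; since by symmetry of the rigid pair the labels $x\leftrightarrow x'$, $y\leftrightarrow y'$ are interchangeable, part (1) applies and translates back). Part (3) then drops out: if $x<_1 x'$, part (1) gives $y<_1 y'$ in both sub-patterns; conversely if $y<_1 y'$ but $x'<_1 x$, part (2) would force $y'<_1 y$, a contradiction, so $x<_1 x'$. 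The main obstacle I anticipate is bookkeeping — systematically ruling out the "bad" orderings among the $4!=24$ arrangements of $x,x',y,y'$ and verifying each forbidden one yields $xy'\in\C_1$ (contradicting $\C_1\cap\C_2=\emptyset$ via Proposition \ref{prop:completion-empty}) or contradicts $x'y\in\C_1$ directly; careful use of Lemma \ref{lem:completion-properties}(1)–(5) with the single non-edge $xy'$ and its two "guarding" edges $xy$ and $x'y'$ should make each case a one-liner, but getting the dichotomy exactly right (rather than a larger list of patterns) is where the care is needed.
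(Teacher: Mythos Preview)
Your plan is correct and follows essentially the same route as the paper: deduce $xy'\notin\C_1$ from Proposition~\ref{prop:completion-empty}, then use Lemma~\ref{lem:completion-properties} (the paper leans on parts (1)--(2), you invoke (4)--(5) as well, but these are interchangeable here) to force the two admissible four-point patterns, derive the ``moreover'' clauses by the same contrapositive trick, and specialize to $Y$ and $X_a\cap X_b$ via the clique structure exactly as you describe. Your treatment of (2) by reversal and of (3) as an immediate corollary of (1)--(2) matches the paper's ``analogous'' and ``immediate'' remarks; the only thing to tighten when you write it out is the four-point case analysis---make sure you explicitly secure $x'<_1 y'$ (e.g.\ via $x\sim x'$ and $xy'\notin\C_1$) in addition to $x<_1 y$ and $y<_1 y'$, since the two target patterns require all of $x<_1\{x',y\}<_1 y'$.
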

\begin{proof}
Let $<_1$ and $<_2$ be linear orders on $V(G)$ as in Equation \ref{eq:2-dim-geo}, with corresponding completions $\C_1$ and $\C_2$. By Proposition \ref{prop:completion-empty} we know that every chord of a rigid pair belongs to exactly one completion. Since $x'y\in\C_1$ then $x'y\notin\C_2$, and moreover $xy'\in\C_2\setminus\C_1$. We now prove (1). Assume $x<_1 x'$. We know $xy'\notin\C_1$, and $y'\in N_Y(x')\setminus N_Y(x)$. Then by part (2) of Lemma \ref{lem:completion-properties} we have $x<_1 x'<_1 y$. If $y<_1 x<_1 x'<_1 y'$ then by definition of completion and the fact that $y\sim y'$ we have $xy'\in\C_1$ which is not true. Similarly if $x<_1 x'<_1 y'<_1 y$ then by definition of completion and the fact that $x\sim y$ we have $xy'\in\C_1$ which is not true. This implies that either $x<_1 x'<_1 y<_1 y'$ or $x<_1 y<_1 x'<_1 y'$. 

Now suppose $u\in N(y')$. Then $y'\in N(u)\setminus N(x)$. If $u<_1 x<_1 y'$ then by part (2) of Lemma \ref{lem:completion-properties} we have $xy'\in\C_1$ which contradicts our assumption ($xy'\in\C_2$). Therefore $x<_1 u$. Since for all $u\in Y$ we have $u\in N(y')$ then $x<_1 Y$. Now let $v\in N(x)$. Then $x\in N(v)\setminus N(y')$. If $x<_1 y'<_1 v$ then by part (2) of Lemma \ref{lem:completion-properties} we have $xy'\in\C_1$ which contradicts our assumption ($xy'\in\C_2$). Therefore $v<_1 y'$. Since for all $v\in X_a\cap X_b$ we have $v\in N(x)$ then $(X_a\cap X_b)<_1 y'$. 

The proof of (2) is analogous. To prove part (3), suppose that $x<_1 x'$. Then, by part (1), we have that $y<_1 y'$. Moreover, if $y<_1 y'$ then we know that part (2) does not occur, and thus we have $x<_1 x'$. This finishes the proof of the lemma.
The proof for the case $x'y\in\C_2$ is analogous. Part (3) is an immediate result of (1) and (2).
\end{proof}
Lemma \ref{lem:x_1<x_2iffy_1<y_2} says that the embedding of a rigid pair $\{xy,x'y'\}$ in $(\Rrr^2,\|.\|_{\infty})$ has a general form as shown in Figure \ref{fig:emb-rigid-pair}. 
In Figure \ref{fig:emb-rigid-pair}, we assumed that the $x$-coordinates of the vertices give us the relation $<_1$ and the $y$-coordinates give the ordering $<_2$. 
\begin{figure}[H]
\label{fig:emb-rigid-pair}
\centerline
{\includegraphics[width=0.25\textwidth]{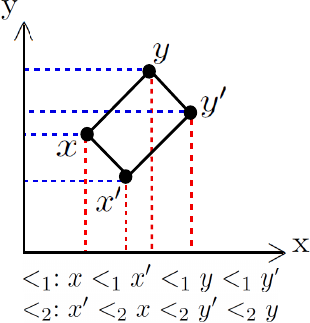}}
\caption{The embedding of a rigid pair $\{xy,x'y'\}$ in $(\Rrr^2,\|.\|_{\infty})$.}
\end{figure} 
\begin{lemma}\label{lem:x1<x2-iff-x'1<x'2}
Let $G$ be a $B_{a,b}$-square geometric graph with linear orders $<_1$ and $<_2$, and corresponding completions $\C_1$ and $\C_2$. Suppose $\{x_1y_1,x_2y_2\}$ and $\{x'_1y'_1,x'_2y'_2\}$ are rigid pairs of $G$ with $x_1, x'_1\in X_a$ or $x_1, x'_1\in X_b$. 
\begin{itemize}
\item[(1)] If $x_1y_2$ and $x'_1y'_2$ belong to the same completion then, for all $i\in\{1,2\}$, we have $x_1<_i x_2$ if and only if $x'_1<_i x'_2$. 
\item[(2)] If $x_1y_2$ and $x'_1y'_2$ belong to different completions then, for all $i\in\{1,2\}$, we have $x_1<_i x_2$ if and only if $x'_2<_i x'_1$. 
\end{itemize}
\end{lemma}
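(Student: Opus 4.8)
The plan is to prove Lemma~\ref{lem:x1<x2-iff-x'1<x'2} by reducing it to repeated applications of Lemma~\ref{lem:x_1<x_2iffy_1<y_2}, which already pins down the order-type of a single rigid pair in each of the two linear orders once we know which completion its chord lies in. The key observation is that Lemma~\ref{lem:x_1<x_2iffy_1<y_2} gives, for a rigid pair $\{xy,x'y'\}$ with $x'y\in\C_1$, the ``fanning out'' facts $x<_1 Y$ and $(X_a\cap X_b)<_1 y'$ in the case $x<_1 x'$, and the mirror-image facts $Y<_1 x$ and $y'<_1(X_a\cap X_b)$ in the case $x'<_1 x$; and likewise with $<_1$ replaced by $<_2$ when the chord $x'y$ lies in $\C_2$. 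So the first step is to fix $i\in\{1,2\}$ and ask which of the two chords $x_1y_2$, $x_2y_1$ of the first rigid pair lies in $\C_i$, and similarly which chord of the second pair lies in $\C_i$; this is where the hypothesis $x_1,x_1'\in X_a$ (or both in $X_b$) is used, together with the labelling convention so that $x_1y_2$ and $x_1'y_2'$ are the named chords.

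For part~(1), suppose $x_1y_2$ and $x_1'y_2'$ lie in the same completion, say both in $\C_i$ (the other case, both outside $\C_i$, means $x_2y_1$ and $x_2'y_1'$ are both in $\C_i$, handled symmetrically by swapping the roles of the two vertices in each pair). Applying Lemma~\ref{lem:x_1<x_2iffy_1<y_2} to the first pair in the order $<_i$: if $x_1<_i x_2$ then we are in its case~(1), which forces $x_1<_i Y$ and $(X_a\cap X_b)<_i y_2$; if instead $x_2<_i x_1$ we are in case~(2), forcing $Y<_i x_1$ and $y_2<_i(X_a\cap X_b)$. Now I would use these global position facts to compare the two pairs. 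Since $x_1,x_1'$ lie in the same one of $X_a,X_b$, and $y_1',y_2'\in Y$ while $X_a\cap X_b\neq\emptyset$, picking a common vertex of $X_a\cap X_b$ (or using $x_1,x_1'$ themselves as test vertices against the $Y$-vertices of the other pair) lets me derive a contradiction if $x_1<_i x_2$ but $x_2'<_i x_1'$: in that configuration one pair's ``left'' fan ($x_1'<_i Y$) collides with the other pair's ``right'' fan ($Y<_i x_1$), since $x_1$ and $x_1'$ are both comparable to every vertex of $Y$ via these inequalities and one also gets $x_1<_i x_1'$ and $x_1'<_i x_1$ simultaneously. Hence $x_1<_i x_2\iff x_1'<_i x_2'$, which is the claim for this $i$; running the argument for $i=1$ and $i=2$ separately finishes part~(1).

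For part~(2), the chords $x_1y_2$ and $x_1'y_2'$ lie in different completions, so there is an $i$ for which $x_1y_2\in\C_i$ while $x_1'y_2'\notin\C_i$, i.e.\ $x_2'y_1'\in\C_i$. Then Lemma~\ref{lem:x_1<x_2iffy_1<y_2} applied to the first pair with chord $x_1y_2\in\C_i$ relates $x_1<_i x_2$ to the fan-out of $x_1$ and $y_2$, while applied to the second pair it must be used with the chord $x_2'y_1'$ in place of $x'y$ — effectively swapping the roles of the primed $1$- and $2$-subscripts — so that the ``$x<_i x'$'' branch of the lemma now corresponds to $x_2'<_i x_1'$. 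Tracking this relabelling through the same fan-collision argument as in part~(1) yields $x_1<_i x_2\iff x_2'<_i x_1'$. For the other index $j\neq i$ one has $x_1y_2\notin\C_j$ hence $x_2y_1\in\C_j$, and $x_1'y_2'\in\C_j$; the same swap-and-compare argument runs again, and again gives $x_1<_j x_2\iff x_2'<_j x_1'$.

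I expect the main obstacle to be the careful bookkeeping in the case analysis: there are two choices of index $i$, two choices of which chord of each pair lies in $\C_i$, and the two sub-pairs may individually be in either order, so one must set up the labelling conventions cleanly (using the hypothesis $x_1,x_1'\in X_a$ or $\in X_b$ and the naming of chords in Definition~\ref{def:chord-graph}/Lemma~\ref{lem:x_1<x_2iffy_1<y_2}) and then observe that every branch reduces to the single ``collision of a left fan with a right fan'' contradiction. Exploiting the symmetry (swapping the two vertices within a rigid pair turns its named chord into the other chord and flips the relevant inequality) should collapse the $2\times2\times2$ bookkeeping down to essentially one argument run four times, so the proof is conceptually short once the conventions are fixed; writing it so the reader can follow the relabelling in part~(2) is the delicate point.
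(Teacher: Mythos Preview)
Your plan is the paper's: for each $i$, apply Lemma~\ref{lem:x_1<x_2iffy_1<y_2} to each rigid pair using the chord that lies in $\C_i$, then combine the resulting order constraints through the adjacency $x_1\sim x_1'$ (from $x_1,x_1'\in X_a$) to rule out opposite orientations; part~(2) reduces to part~(1) by relabelling one pair, exactly as you say.

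Two bookkeeping points to tighten. First, under your stated assumption $x_1y_2\in\C_i$, the matching in Lemma~\ref{lem:x_1<x_2iffy_1<y_2} is $x'y=x_1y_2$, so $x=x_2$; hence $x_1<_i x_2$ is the lemma's case~(2) and yields $Y<_i x_2$, not $x_1<_i Y$. The fan picture you wrote is the one obtained under the symmetric choice $x_2y_1\in\C_i$, which is precisely the paper's WLOG. Second, the contradiction is not literally ``$x_1<_i x_1'$ and $x_1'<_i x_1$'': from $x_1<_i Y$ and $Y<_i x_1'$ you only get $x_1<_i x_1'$, once. What actually closes the argument is the $N(x)$-clause of Lemma~\ref{lem:x_1<x_2iffy_1<y_2}: feeding $x_1\in N(x_1')$ into the second pair's case~(2) gives $y_2'<_i x_1$, which collides with $x_1<_i Y$ (equivalently, the paper observes $y_2'<_i x_1'<_i y_2$ with $y_2\sim y_2'$ forces $x_1'y_2'\in\C_i$, contradicting $x_1'y_2'\in\C_j$). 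So your ``test $x_1,x_1'$ against the $Y$-vertices of the other pair'' idea is right, but it is the $N(x_1)$/$N(x_1')$ clause, not the bare $Y$-fan, that delivers the contradiction.
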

\begin{proof}
We only prove the lemma for $X_a$. The proof for $X_b$ follows by symmetry of $X_a$ and $X_b$.
We prove (1) for $i=1$. The proof for $i=2$ is analogous. Assume $\{x_1y_1,x_2y_2\}$ and $\{x'_1y'_1,x'_2y'_2\}$ are rigid pairs of $G$. First note that if $x_1y_2$ and $x'_1y'_2$ belong to the same completion then $x_2y_1$ and $x'_2y'_1$ belong to the same completion as well.

Suppose without loss of generality that $x_2y_1\in\C_1$ and $x'_2y'_1\in\C_1$. Consequently, $x_1y_2\in \C_2$ and $x'_1y'_2\in\C_2$.
Let $x_1<_1 x_2$. Since $x_2y_1\in\C_1$ by part (1) of Lemma \ref{lem:x_1<x_2iffy_1<y_2} we have $x_1<_1 y_2$. Also since $y_2'\in N(y_2)$ then by part (1) of Lemma \ref{lem:x_1<x_2iffy_1<y_2} we have $x_1<_1 y_2'$. Similarly, since $x_1, x'_1\in X_a$ we know that, $x_1'\in N(x_1)$, and thus $x_1'<_1 y_2$. Consequently, if $y_2'<_1 x_1'$ then $y_2'<_1 x_1'<_1 y_2$ and by part (2) of Lemma \ref{lem:completion-properties} we have that $x_1'y_2'\in\C_1$, which contradicts $x'_1y'_2\in\C_2$. Therefore, $x_1'<_1 y_2'$, and thus by Lemma \ref{lem:x_1<x_2iffy_1<y_2} we know that $x_1'<_1 x_2'$. If $x_1'<_1 x_2'$ then an analogous discussion proves that $x_1<_1 x_2$.

We now prove (2). Suppose that $x_1y_2$ and $x'_1y'_2$ belong to different completions. Without loss of generality let $x_1'y_2'\in\C_1$ and $x_1y_2\in\C_2$. This implies that $x'_2y'_1\in\C_2$ and $x_2y_1\in\C_1$. Then we have that $x_1'y_2'$ and $x_2y_1$ belong to the same completion. Therefore by part (1) we have that $x_1'<_i x_2'$ if and only if $x_2<_i x_1$.
\end{proof}
In the next few lemmas, we assume that the $B_{a,b}$-graph $G$ is as in Assumption \ref{ass:nec-condition}. We collect some properties of the non-edges of class (3) {\sl i.e.} the non-edges of form $ab$, where $a\in X_a\setminus X_b$ and $b\in X_b\setminus X_a$.
The next two auxiliary lemmas (Lemmas \ref{lem:location-1-a,b-rigid-free-conditions} and \ref{lem:location-2-a,b-rigid-free-conditions}) give us some information about the relation of the vertices of $G$ in the linear orders $<_1$ and $<_2$. 
%
\begin{lemma}\label{lem:location-1-a,b-rigid-free-conditions}
Let $G$ be a square geometric $\B_{a,b}$-graph as in Assumption \ref{ass:nec-condition}. Suppose $ab\notin\C_1$.
\begin{itemize}
\item[(1)] Let $b<_1 a$. Then we have $b<_1 (X_a\cap X_b)<_1 a$. Moreover, $N_Y(b)<_1 a$ and $b<_1 N_Y(a)$.
\item[(2)] Let $a<_1 b$. Then we have $a<_1 (X_a\cap X_b)<_1 b$. Moreover, $N_Y(a)<_1 b$ and $a<_1 N_Y(b)$.
\end{itemize}
If $ab\notin\C_2$ then (1) and (2) hold if we replace $<_1$ by $<_2$.
\end{lemma}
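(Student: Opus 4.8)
The plan is to prove the two parts symmetrically, so I will focus on part (1), the case $b <_1 a$; part (2) follows by swapping the roles of $a$ and $b$, and the $<_2$ case is identical since the only hypothesis used is $ab \notin \C_i$. The backbone of the argument is Lemma \ref{lem:completion-properties}, especially parts (2), (3) and (4), together with the structural facts that $X_a \cap X_b$ is a clique containing both $a$ and $b$ in its closed neighborhood (since $X_a$ and $X_b$ are cliques), and that by Assumption \ref{ass:1} we have $0 < |N_Y(v)| < |Y|$ for $v \in \{a,b\}$, so the neighborhoods $N_Y(a)$, $N_Y(b)$ are nonempty and there genuinely are non-edges of class (3) to reason about.

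First I would establish $b <_1 (X_a \cap X_b) <_1 a$. Let $x \in X_a \cap X_b$. Then $x \in N(a) \cap N(b)$, while $ab \notin E(G)$. Apply Lemma \ref{lem:completion-properties}(2) with the triple $(b, a, x)$: since $x \in N(a) \setminus N(b)$ — wait, that is false, so instead I use that $a,b$ are both adjacent to $x$ but not to each other, i.e.\ I apply Lemma \ref{lem:completion-properties}(1): with $u = b$, $v = a$, $w = x$ we would get $ax$ or $bx$ in $\C_1$, which is absurd since both are edges. So in fact $x$ cannot lie strictly between $b$ and $a$ in $<_1$ only if — hmm, I need to be careful about direction. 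The correct move: suppose for contradiction $b <_1 a <_1 x$ or $x <_1 b <_1 a$. In the first case, $a$ lies between $b$ and $x$; since $b \sim x$ is false we can't use that pair, but $b, x$ are both in $N(x')$ for any $x' \in X_a \cap X_b$, and more directly: I want to use Lemma \ref{lem:completion-properties}(2) with $w = b$, the vertex $v$ adjacent to $w$ but not to the other. Since $b \in N(x) \setminus N(a)$ — no, $a \sim x$. Let me restructure: apply Lemma \ref{lem:completion-properties}(2) to the pair $(a, x)$ using the vertex $b$ with $b \in N(x) \setminus N(a)$; if $x <_1 a$ then... this forces $b <_1 a$ region; and applied to $(b,x)$ using $a \in N(x)\setminus N(b)$ forces a symmetric conclusion. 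Combining, together with $ab \notin \C_1$ to rule out $b$ and $a$ sitting inside an adjacent-pair interval, I get $b <_1 x <_1 a$ for every $x \in X_a \cap X_b$, which is the claim $b <_1 (X_a \cap X_b) <_1 a$.

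Next, for $N_Y(b) <_1 a$: let $y \in N_Y(b)$. Since $y \sim b$ but $y \not\sim a$ is possible; if $y \in N_Y(a)$ too there is nothing relevant, so take $y \in N_Y(b)$ and suppose toward contradiction $a <_1 y$. Pick $x \in X_a \cap X_b$; we have $x <_1 a <_1 y$ from the previous step, and $x \sim y$ (as $x \in X_b$, $y \in N_Y(b) \subseteq$ neighbors of the $X_b$-clique... actually $y \sim x$ since $x \in X_b$ and... no, $y \in Y$ and $x \in X$, adjacency is not automatic). So instead I use that $b \sim y$ and $b \sim x$ with $x <_1 b$ (wait, we have $b <_1 x$): so $b <_1 x$, and if $a <_1 y$ then since $b <_1 a$ we get $b <_1 a <_1 y$; now $b \sim y$, and $b \sim b$ trivially — I need two neighbors of $y$ bracketing $a$. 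Use $b <_1 a$ and find a neighbor of $y$ above $a$: if $y <_1 a$ we are done for this sub-case, so assume $a <_1 y$; then $b <_1 a <_1 y$ with $b \in N(y)$, and I need a neighbor of $y$ that is $\geq y$, trivially $y$ itself won't help, so I instead apply Lemma \ref{lem:completion-properties}(4) with $w = a$, $z$ being... hmm, $a$ is not a non-edge endpoint with $b$? Actually $ab$ \emph{is} a non-edge. So apply Lemma \ref{lem:completion-properties}(4) with $z = b$, $w = a$, $u_1 \in N(b)$ chosen as $y$ with $y >_1 a$ giving $a <_1 y$ hence "$u_1 <_i w$" fails... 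I see the cleaner route is Lemma \ref{lem:completion-properties}(2) directly: $y \in N(b) \setminus N(a)$, and we have $b <_1 a$; if $ab \notin \C_1$ then by part (2) of Lemma \ref{lem:completion-properties} (second sentence, the "$v <_i u$" variant applied appropriately) we conclude $y <_1 a$, i.e.\ $N_Y(b) <_1 a$. Symmetrically, $b <_1 N_Y(a)$ comes from applying Lemma \ref{lem:completion-properties}(2) to each $y' \in N_Y(a)$ using $y' \in N(a) \setminus N(b)$ and $b <_1 a$, $ab \notin \C_1$.

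\textbf{Main obstacle.} The delicate point is getting the \emph{directions} right in each invocation of Lemma \ref{lem:completion-properties}(2): that lemma has two mirrored conclusions depending on whether $u <_i v$ or $v <_i u$, and one must consistently track which of $a$, $b$ plays the role of the vertex with the larger neighborhood and which is "between" which adjacent pair. The hypothesis $ab \notin \C_1$ is exactly what licenses these applications (it says $ab$ is not trapped inside an interval bounded by an adjacent pair), so every step must genuinely use it. I expect the bracketing claim $b <_1 (X_a \cap X_b) <_1 a$ to be the load-bearing one, proved first, with the two neighborhood claims $N_Y(b) <_1 a$ and $b <_1 N_Y(a)$ then falling out by the same mechanism applied to $Y$-vertices instead of $X_a\cap X_b$-vertices; no case analysis beyond "$y$ above or below $a$" should be needed.
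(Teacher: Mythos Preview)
Your proposal has the right overall shape (prove (1), derive (2) by symmetry, and likewise for $<_2$), but the execution contains genuine errors that prevent the argument from going through.

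For the bracketing claim $b <_1 (X_a\cap X_b) <_1 a$: you set up the correct contradiction (``suppose $b<_1 a <_1 x$ or $x<_1 b<_1 a$''), but then assert that $b\sim x$ is false. This is wrong: $x\in X_a\cap X_b\subseteq X_b$ and $b\in X_b$, and $X_b$ is a clique, so $b\sim x$. Once you use this, the argument is immediate and is exactly what the paper does: if $b<_1 a<_1 x$ then $a$ lies between the adjacent pair $b,x$, so $ab\in\C_1$ by the definition of completion; if $x<_1 b<_1 a$ then $b$ lies between the adjacent pair $x,a$ (since $x\in X_a$ gives $x\sim a$), again forcing $ab\in\C_1$. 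No appeal to Lemma~\ref{lem:completion-properties}(2) is needed.

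For $N_Y(b)<_1 a$: your final route via Lemma~\ref{lem:completion-properties}(2) is misapplied. With $u=a$, $v=b$, $w=y$, that lemma requires the hypothesis $wu=ya\notin\C_1$, \emph{not} $ab\notin\C_1$; you have no information about whether $ya\in\C_1$. In addition, this setup presupposes $y\in N(b)\setminus N(a)$, but the claim must also cover $y\in N_Y(b)\cap N_Y(a)$, a case you dismiss as ``nothing relevant'' without argument. The paper handles both issues at once by arguing directly from the completion definition: if $a<_1 y$ for some $y\in N_Y(b)$, then $b<_1 a<_1 y$ with $b\sim y$, so $a$ is between an adjacent pair and $ab\in\C_1$, contradicting the hypothesis. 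This works uniformly regardless of whether $y\sim a$. The claim $b<_1 N_Y(a)$ is symmetric.
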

\begin{proof}
We only prove (1). The proof of (2) follows by exchanging $a$ and $b$ in the discussion for the proof of (1). Let $b<_1 a$ and $x\in X_a\cap X_b$. Assume by contradiction that $b<_1 a<_1 x$ or $x<_1 b<_1 a$. Since $x\in N(b)$ and $x\in N(a)$, by the definition of completion, for both cases we have $ab\in\C_1$, which contradicts our assumption. Therefore, for all $x\in X_a\cap X_b$ we have $b<_1 x<_1 a$.
Now let $y\in N_Y(b)$. Then if $b<_1 a<_1 y$ then since $y\in N(b)$ we have that $ab\in\C_1$ which contradicts $ab\notin\C_1$. Therefore, for all $y\in N(b)$ we have $y<_1 a$. Similarly for all $y\in N(a)$ we have $b<_1 y$.
\end{proof}
\begin{lemma}\label{lem:location-2-a,b-rigid-free-conditions}
Let $G$ be a square geometric $\B_{a,b}$-graph as in Assumption \ref{ass:nec-condition}. Suppose $ab\notin\C_1$.
\begin{itemize}
\item[(1)] Let $b<_1 a$. Then either $b<_1 Y$ or $Y<_1 a$.
\item[(2)] Let $a<_1 b$. Then either $a<_1 Y$ or $Y<_1 b$.
\end{itemize}
If $ab\notin\C_2$ then (1) and (2) hold if we replace $<_1$ by $<_2$.
\end{lemma}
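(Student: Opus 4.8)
The plan is a short proof by contradiction directly from the definition of the completion (Definition~\ref{def:completion}); the only structural fact needed is that $Y$ is a clique, so neither Lemma~\ref{lem:location-1-a,b-rigid-free-conditions} nor the rigid-free machinery will be invoked.

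For part (1), I would assume $b<_1 a$ and $ab\notin\C_1$, where $ab$ is a class~(3) non-edge (so $a\in X_a\setminus X_b$, $b\in X_b\setminus X_a$, and $ab\notin E(G)$), and suppose toward a contradiction that the conclusion fails, i.e.\ neither $b<_1 Y$ nor $Y<_1 a$ holds. Since $Y$ is disjoint from $X_a\cup X_b$ in the clique bipartition, every $y\in Y$ is distinct from both $a$ and $b$, so $<_1$ compares them strictly; hence the failure of $b<_1 Y$ yields a vertex $y_1\in Y$ with $y_1<_1 b$, and the failure of $Y<_1 a$ yields a vertex $y_2\in Y$ with $a<_1 y_2$. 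Together with $b<_1 a$ this gives the chain $y_1<_1 b<_1 a<_1 y_2$, so $y_1\neq y_2$ and, since $Y$ is a clique, $y_1\sim y_2$. I would then apply Definition~\ref{def:completion} with $u=y_1$ and $v=y_2$: we have $u<_1 a<_1 v$, $u<_1 b<_1 v$, $u\sim v$, and $ab$ a non-edge of $G$, which says exactly that $ab\in\C_1$, contradicting the hypothesis. Hence $b<_1 Y$ or $Y<_1 a$.

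For part (2) I would run the same argument with the roles of $a$ and $b$ exchanged: under $a<_1 b$, a failure of the conclusion produces $y_1\in Y$ with $y_1<_1 a$ and $y_2\in Y$ with $b<_1 y_2$, hence $y_1<_1 a<_1 b<_1 y_2$ with $y_1\sim y_2$, and Definition~\ref{def:completion} again forces $ab\in\C_1$. The final assertion, with $\C_2$ and $<_2$ in place of $\C_1$ and $<_1$, follows word for word, since the argument uses only the defining property of a completion together with the fact that $Y$ is a clique.

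I do not anticipate any real obstacle: the one point needing a moment's care is converting the negation ``$b<_1 Y$ fails'' into an explicit witness $y_1\in Y$ with $y_1<_1 b$ (and symmetrically for $a$), which relies on $Y\cap(X_a\cup X_b)=\emptyset$ so that the linear order $<_1$ gives a strict inequality rather than a possible equality. In effect the lemma strengthens the ``$ab$ lies between two adjacent vertices of $Y$'' obstruction implicit in Lemma~\ref{lem:location-1-a,b-rigid-free-conditions} to one phrased in terms of the whole of $Y$.
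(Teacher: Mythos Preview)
Your proof is correct and follows essentially the same route as the paper's: assume the conclusion fails to obtain $y_1<_1 b<_1 a<_1 y_2$ with $y_1,y_2\in Y$, use that $Y$ is a clique to get $y_1\sim y_2$, and apply the definition of completion to conclude $ab\in\C_1$, a contradiction. The paper's argument is slightly terser but identical in substance; your care in extracting the strict inequalities from the failed disjunction is a nice touch the paper leaves implicit.
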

\begin{proof}
We only prove (1). The proof of (2) follows by exchanging $a$ and $b$ in the proof of (1). Let $b<_1 a$. Suppose to the contrary that there are $y_1, y_2\in Y$ such that $y_1<_1 b<_1 a<_1 y_2$. Then, since $y_1\in N(y_2)$ by definition of completion we have $ab\in\C_1$, which is a contradiction.  
\end{proof}

The next two lemmas investigate the properties of the vertices of the sets $\mathcal{A}$, and $\mathcal{B}$. 
%
\begin{lemma}\label{lem:help-mega}
Let $G$ be a square geometric ${B}_{a,b}$-graph as in Assumption \ref{ass:nec-condition}.  
\begin{itemize}
\item[(1)] Let $\{ay_1,x_2y_2\}$ and $\{by_1',x_2'y_2'\}$ be rigid pairs. If $ay_2$ and $by_2'$ belong to the same completion $\C_i$, $1\leq i\leq 2$, then $ab\in\C_i$.
\item[(2)] Let $\{ay_1,x_2y_2\}$ and $\{ay_1',x_2'y_2'\}$ be rigid pairs. Then $ay_2$ and $ay_2'$ belong to the same completion $\C_1$ or $\C_2$. Similarly, if $\{by_1,x_2y_2\}$ and $\{by_1',x_2'y_2'\}$ are rigid pairs then $by_2$ and $by_2'$ belong to the same completion $\C_1$ or $\C_2$.
\item[(3)] Fix $i\in\{1,2\}$. If $a<_i X_a\cap X_b<_i b$ and $a<_i Y<_i b$ then all edges of $E(\tilde{G})$ has an end in either $a$-vertices or $b$-vertices .
\end{itemize}
\end{lemma}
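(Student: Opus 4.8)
The plan is to treat the three parts separately; part~(3) is the most direct, while parts~(1) and~(2) both go by contradiction and rely on the positional information that Lemmas~\ref{lem:x_1<x_2iffy_1<y_2}, \ref{lem:location-1-a,b-rigid-free-conditions} and~\ref{lem:location-2-a,b-rigid-free-conditions} extract from the completions.

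For part~(3) I would argue as follows. By Definition~\ref{def:chord-graph}, every edge of $\tilde G$ is the pair of chords $\{x_1y_2,x_2y_1\}$ of some rigid pair $\{x_1y_1,x_2y_2\}$, and by Definition~\ref{def:rigid-pair} we have $x_1,x_2\in X_a\cup X_b$; since a chord $x_jy_k$ with $x_j\in X_a\setminus X_b$ is an $a$-vertex and with $x_j\in X_b\setminus X_a$ is a $b$-vertex, it suffices to show that there is no rigid pair $\{x_1y_1,x_2y_2\}$ with $x_1,x_2\in X_a\cap X_b$. Suppose one exists. Since $X_a$ and $X_b$ are cliques, each of $x_1,x_2$ is adjacent to $a$ and to $b$; and since $y_1,y_2\in Y$ while $a<_iY<_ib$, each of $y_1,y_2$ lies strictly between $a$ and $b$ in $<_i$. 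Then Lemma~\ref{lem:completion-properties}(1), applied with $z=x_1$, $w=y_2$, $u=a$, $v=b$ (legitimate because $x_1y_2\notin E$), gives $x_1y_2\in\C_i$, and applied with $z=x_2$, $w=y_1$ it gives $x_2y_1\in\C_i$. So both chords of the rigid pair lie in $\C_i$, contradicting Proposition~\ref{prop:completion-empty}; hence no such pair exists, and every edge of $\tilde G$ has an $a$-vertex or a $b$-vertex as an endpoint.

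For part~(1) I would assume $ab\notin\C_i$ and, without loss of generality, $b<_ia$. Lemma~\ref{lem:location-1-a,b-rigid-free-conditions}(1) then gives $b<_i(X_a\cap X_b)<_ia$, $N_Y(b)<_ia$ and $b<_iN_Y(a)$, and Lemma~\ref{lem:location-2-a,b-rigid-free-conditions}(1) gives $b<_iY$ or $Y<_ia$. On the other hand, applying Lemma~\ref{lem:x_1<x_2iffy_1<y_2} to $\{ay_1,x_2y_2\}$ with the chord $ay_2\in\C_i$ locates $a$ strictly between $x_2$ and $y_1$ and yields either $x_2<_iY$ and $(X_a\cap X_b)<_iy_1$, or $Y<_ix_2$ and $y_1<_i(X_a\cap X_b)$; likewise for $b$ from $\{by_1',x_2'y_2'\}$ with $by_2'\in\C_i$. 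I would then run the (at most four) resulting cases against the location of $a$ and $b$ above, and, using that every vertex of $X_a\cap X_b$ is adjacent to both $a$ and $b$, exhibit in each case an edge straddling both $a$ and $b$ — with endpoints among $x_2$, $x_2'$ and a vertex of $X_a\cap X_b$ — so that $ab\in\C_i$ by Lemma~\ref{lem:completion-properties}(1) or~(4), the desired contradiction.

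Part~(2) I would also do by contradiction: assuming $ay_2\in\C_1$ and $ay_2'\in\C_2$, first note that $a$ can lie strictly between two vertices of $Y$ in neither order, since otherwise the first statement of Lemma~\ref{lem:completion-properties}(3), applied to the non-neighbour $y_2'$ (resp.\ $y_2$) of $a$ in $Y$, would put $ay_2'$ into $\C_1$ (resp.\ $ay_2$ into $\C_2$), violating $\C_1\cap\C_2=\emptyset$. Combining this with Lemma~\ref{lem:x_1<x_2iffy_1<y_2} applied to the two rigid pairs (using the chords $ay_2$, $ay_2'$ and their partners $x_2'y_1'\in\C_1$, $x_2y_1\in\C_2$), or equivalently with Lemma~\ref{lem:x1<x2-iff-x'1<x'2}(2), pins $x_2$ and $x_2'$ on opposite sides of $a$ in both orders; the finer orderings of Lemma~\ref{lem:x_1<x_2iffy_1<y_2} then force the contradiction, and the $b$-statement follows by interchanging $\C_1$ and $\C_2$. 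I expect the main obstacle to be the case analysis in part~(1): when all of $X_a\cap X_b$ lies between $a$ and $b$ and one of $x_2,x_2'$ fails to be adjacent to the nearer of $a,b$, producing the straddling edge needs the detailed orderings and the auxiliary neighbourhood clauses of Lemma~\ref{lem:x_1<x_2iffy_1<y_2} (such as $u\in N(y')$ implying $x<_iu$), whereas parts~(2) and~(3) are essentially forced once the right instance of Lemma~\ref{lem:completion-properties} is identified.
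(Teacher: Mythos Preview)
Your argument for part~(3) is correct and is exactly what the paper does.

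For part~(1), your approach by contradiction works, but it is more roundabout than necessary. The paper does \emph{not} assume $ab\notin\C_i$; instead, assuming $ay_2,by_2'\in\C_1$ and (say) $a<_1x_2$, it reads off from Lemma~\ref{lem:x_1<x_2iffy_1<y_2} that $y_1<_1a<_1x_2$, and then splits only on whether $b<_1x_2'$ or $x_2'<_1b$. In the first case Lemma~\ref{lem:completion-properties}(4) (with $u_1=x_2\in N(b)$, $u_2=x_2'\in N(a)$) gives $ab\in\C_1$ immediately; in the second, Lemma~\ref{lem:x_1<x_2iffy_1<y_2} gives $x_2'<_1b<_1y_1'$, and Lemma~\ref{lem:completion-properties}(5) (with $y_1\sim y_1'$ and $x_2\sim x_2'$) again yields $ab\in\C_1$. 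So the ``main obstacle'' you anticipate disappears: Lemmas~\ref{lem:location-1-a,b-rigid-free-conditions} and~\ref{lem:location-2-a,b-rigid-free-conditions} are not needed here at all.

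Part~(2) has a genuine gap. Your plan is to reach a contradiction using only $a$, the two rigid pairs, and the position of $Y$, never invoking a vertex of $X_b\setminus X_a$. But after you pin $x_2$ and $x_2'$ on opposite sides of $a$ in both orders (via Lemma~\ref{lem:x1<x2-iff-x'1<x'2}(2)), the ``finer orderings'' of Lemma~\ref{lem:x_1<x_2iffy_1<y_2} only tell you where $Y$ sits relative to $a$, $x_2$, $x_2'$; they do not by themselves produce two adjacent vertices sandwiching a non-edge, because $a,x_2,x_2'$ are pairwise adjacent in $X_a$. The paper's key step is precisely to bring in some $b\in X_b\setminus X_a$ (available by Assumption~\ref{ass:1}): since $x_2,x_2'\in X_a\cap X_b\subseteq N(b)$ and $x_2'<_ia<_ix_2$ for each $i$, Lemma~\ref{lem:completion-properties}(1) gives $ab\in\C_i$ for both $i$, contradicting $\C_1\cap\C_2=\emptyset$. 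You need this use of $b$; without it the argument does not close. (The ``$b$-statement'' is then obtained symmetrically, using a vertex $a\in X_a\setminus X_b$.)
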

\begin{proof}
To prove (1), suppose without loss of generality that $ay_2, by_2'\in\C_1$.
Consider the rigid pair $\{ay_1,x_2y_2\}$. Suppose without loss of generality $a<_1 x_2$. Since $ay_2\in\C_1$ then by (1) of Lemma \ref{lem:x_1<x_2iffy_1<y_2} we have that $y_1<_1 a<_1 x_2$, $y_1<_1 X$ and $Y<_1 x_2$. Now consider the rigid pair $\{by_1',x_2'y_2'\}$. First let $b<_1 x_2'$. Since $a<_1 x_2$ then by (4) of Lemma \ref{lem:completion-properties} we have that $ab\in\C_1$. Now let $x_2'<_1 b$. Since $by_2'\in\C_1$ then by Lemma \ref{lem:x_1<x_2iffy_1<y_2} we have that $x_2'<_1 b<_1 y_1'$. This together with $y_1<_1 a<_1 x_2$, $y_1\in N(y_1')$, $x_2\in N(x_2')$, and (5) of Lemma \ref{lem:completion-properties} implies that $ab\in\C_1$. 

We now prove (2). Let $\{ay_1,x_2y_2\}$ and $\{ay_1',x_2'y_2'\}$ be rigid pairs. Suppose to the contrary that $ay_2\in\C_1$ and $ay_2'\in\C_2$. Then, by (2) of Lemma \ref{lem:x1<x2-iff-x'1<x'2}, for all $i\in\{1,2\}$ we have that $a<_i x_2$ if and only if $x_2'<_i a$. First let $i=1$, and without loss of generality assume that $a<_1 x_2$. Then $x_2'<_1 a<_1 x_2$. Since $x_2, x_2'\in N(b)$ then by (1) of Lemma \ref{lem:completion-properties} we have that $ab\in\C_1$. An analogous discussion for $i=2$ shows that $ab\in\C_2$. This implies that $ab\in\C_1\cap\C_2$, which contradicts  $\C_1\cap\C_2=\emptyset$. 
This proves the first statement of (2). The proof of the second statement follows from an analogous discussion.

Without loss of generality, we prove (3) for $i=1$. Let $a<_1 X_a\cap X_b<_1 b$ and $a<_1 Y<_2 b$. Then, for all $y\in Y$, and all $x\in X_a\cap X_b$ we have that either $a<_1 x<_1 y<_1 b$, or $a<_1 y<_1 x<_1 b$. Moreover, $a, b\in N(x)$, and thus for all $y\in Y$ and all $x\in X_a\cap X_b$ we have $xy\in\C_1$. 
Now suppose to the contrary that $E(\tilde{G})$ has an edge which has no ends in $a$-vertices and $b$-vertices. Then there is a rigid pair $\{x_1y_1,x_2y_2\}$ in $G$ and $x_1y_2\sim^* x_2y_1$. By Proposition \ref{prop:completion-empty}, we know that $x_1y_2$ and $x_2y_1$ belong to different completions. But we know that $x_1y_2, x_2y_1\in\C_1$. This finishes the proof.

\end{proof}
\begin{lemma}\label{lem:help-mega-2}
Let $G$ be a square geometric ${B}_{a,b}$-graph as in Assumption \ref{ass:nec-condition}, $a_1, a_2$ be two different vertices of  $X_a\setminus X_b$, and $b\in X_b\setminus X_a$. Let $\{a_1y_1,x_2y_2\}$ and $\{a_2y_1',x_2'y_2'\}$ be rigid pairs. If $a_1y_2$ and $a_2y_2'$ belong to different completions $\C_1$ and $\C_2$, then
\begin{itemize}
\item[(1)] Each completion $\C_1$ and $\C_2$ contains exactly one of the non-edges $a_1b$ and $a_2b$.
\item[(2)] Fix $i\in\{1,2\}$ and suppose that $a_1b\notin\C_i$
\begin{itemize}
\item[(2.1)] If $a_1y_2\notin\C_i$ then for all $x\in X_a\cup X_b$ and all $y\in N_Y(b)\setminus N_Y(x)$, we have $xy\in\C_i$.
\item[(2.2)] If $a_1y_2\in\C_i$ then for all $y\in Y\setminus N_Y(b)$ we have $by\in\C_i$.
\end{itemize}
\end{itemize}
Moreover, if $b_1, b_2$ are different vertices of $X_b\setminus X_a$ and $a\in X_a\setminus X_b$ then the result holds.
\end{lemma}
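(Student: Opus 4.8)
The plan is to fix a normalisation and then carry out a finite but lengthy case analysis on the positions of $b$ and of the six vertices appearing in the two rigid pairs inside $<_1$ and $<_2$, reading off constraints from the ordering lemmas already established (Lemmas~\ref{lem:completion-properties}, \ref{lem:x_1<x_2iffy_1<y_2}, \ref{lem:x1<x2-iff-x'1<x'2}, \ref{lem:location-1-a,b-rigid-free-conditions}, \ref{lem:location-2-a,b-rigid-free-conditions}) and closing off the impossible configurations with $\C_1\cap\C_2=\emptyset$ and with Assumption~\ref{ass:1}. Normalise by assuming without loss of generality that $a_1y_2\in\C_1$; by Proposition~\ref{prop:completion-empty} the other chord $x_2y_1$ of $\{a_1y_1,x_2y_2\}$ is then in $\C_2$, and since $a_1y_2$ and $a_2y_2'$ lie in different completions this forces $a_2y_2'\in\C_2$ and hence $x_2'y_1'\in\C_1$. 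Also record that $a_1,a_2\in X_a$ are adjacent and, because $a_1y_2,a_2y_2'$ are in different completions, Lemma~\ref{lem:x1<x2-iff-x'1<x'2}(2) gives for each $i$ the linking equivalence $a_1<_i x_2\iff x_2'<_i a_2$; this is the bridge between the two rigid pairs.

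For part~(1), since $\C_1\cap\C_2=\emptyset$ it suffices to prove (a) each of $\C_1,\C_2$ contains at least one of $a_1b,a_2b$, and (b) neither contains both. For (a), assume $a_1b\notin\C_i$ and $a_2b\notin\C_i$. Applying Lemma~\ref{lem:location-1-a,b-rigid-free-conditions} to $a_1b$ and to $a_2b$ puts $X_a\cap X_b$ strictly between $b$ and $a_j$ in $<_i$ (for $j=1,2$) and controls $N_Y(b),N_Y(a_j)$; Lemma~\ref{lem:location-2-a,b-rigid-free-conditions} adds ``$b<_i Y$ or $Y<_i a_j$''. On the other hand, applying Lemma~\ref{lem:x_1<x_2iffy_1<y_2} to whichever rigid pair has its chord in $\C_i$, together with the linking equivalence, yields a second chain of inequalities among $y_1,y_2,x_2$ (and their primed copies) together with the sweeping bounds $Y<_i x_2$, $y_1<_i X_a\cap X_b$, etc. In most sub-cases the two chains force $N_Y(b)$ to lie entirely left and entirely right of one of $a_1,a_2$, impossible since $N_Y(b)\neq\emptyset$ by Assumption~\ref{ass:1}; in the remaining sub-cases they yield ``$b<_i X_a\cap X_b<_i a_j$ and $b<_i Y<_i a_j$'' (or its $X_a\leftrightarrow X_b$ mirror), which is excluded because by the contrapositive of Lemma~\ref{lem:help-mega}(3) it would make every edge of $\tilde G$ incident to an $a$- or $b$-vertex, contradicting the last clause of Assumption~\ref{ass:1}. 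For (b), if $a_1b,a_2b\in\C_i$ then both lie outside $\C_{3-i}$, and rerunning the same kind of analysis in the order $<_{3-i}$ (where the chords now in $\C_{3-i}$ are $x_2y_1,a_2y_2'$ or $a_1y_2,x_2'y_1'$) produces the same type of contradiction.

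For part~(2), fix $i$ and suppose $a_1b\notin\C_i$; by Lemma~\ref{lem:location-1-a,b-rigid-free-conditions} we may assume $b<_i a_1$, so $b<_i(X_a\cap X_b)<_i a_1$, $N_Y(b)<_i a_1$ and $b<_i N_Y(a_1)$ (both neighbourhoods nonempty by Assumption~\ref{ass:1}). If $a_1y_2\notin\C_i$ then the chord $x_2y_1$ is in $\C_i$, and Lemma~\ref{lem:x_1<x_2iffy_1<y_2} forces $x_2<_i a_1$ — the alternative $a_1<_i x_2$ would put all of $Y\supseteq N_Y(b)$ to the right of $a_1$, contradicting $N_Y(b)<_i a_1$ — whence $Y<_i a_1$ and $y_2<_i X_a\cap X_b$; combining these with $b<_i X_a\cap X_b<_i a_1$ and the adjacency of $b$ to $X_a\cap X_b$ and to $N_Y(b)$, one checks that for every $x\in X_a\cup X_b$ and every $y\in N_Y(b)\setminus N_Y(x)$ there is an adjacent pair (drawn from $\{b\}$, $X_a\cap X_b$, and $a_1$ with its $Y$-neighbours) trapping both $x$ and $y$ in $<_i$, giving $xy\in\C_i$. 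If $a_1y_2\in\C_i$, pick an edge $uv$ with $u<_i a_1<_i v$ and $u<_i y_2<_i v$; using $b<_i N_Y(a_1)$ for a $Y$-vertex to the right of $b$ and Lemma~\ref{lem:location-2-a,b-rigid-free-conditions}(1) together with Assumption~\ref{ass:1}/Lemma~\ref{lem:help-mega}(3) to rule out $b<_i Y$, one obtains $y^-,y^+\in Y$ with $y^-<_i b<_i y^+$, and Lemma~\ref{lem:completion-properties}(3) then gives $by\in\C_i$ for every $y\in Y\setminus N_Y(b)$ at once. The final clause, about $b_1,b_2\in X_b\setminus X_a$ and $a\in X_a\setminus X_b$, follows verbatim from the $X_a\leftrightarrow X_b$ symmetry in the definition of a $B_{a,b}$-graph.

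The main obstacle is the bookkeeping in part~(1): the number of sub-cases — positions of $a_1,a_2,x_2,x_2',b$ relative to one another and to $X_a\cap X_b$ and $Y$, in each of $<_1$ and $<_2$ — is large, and in each sub-case one must close the argument with exactly the right ingredient, namely the disjointness $\C_1\cap\C_2=\emptyset$, the nonemptiness $N_Y(v)\neq\emptyset$, or the existence of a $\tilde G$-edge avoiding $a$- and $b$-vertices (via Lemma~\ref{lem:help-mega}(3)). The ordering lemmas do all the local work; the delicate point is keeping straight which completion each chord and each class-(3) non-edge lands in throughout the case split.
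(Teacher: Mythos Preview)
Your plan for part~(1) is much more circuitous than the paper's, and your sketch for~(2.2) contains a step that actually fails.

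For~(1), the paper does not go through Lemmas~\ref{lem:location-1-a,b-rigid-free-conditions}, \ref{lem:location-2-a,b-rigid-free-conditions} or Lemma~\ref{lem:help-mega}(3), and it never invokes the ``no $\tilde G$-edge avoids $a$- and $b$-vertices'' clause of Assumption~\ref{ass:1}. The key observation you are missing is simply that $x_2,x_2'\in N(b)$. With the linking equivalence $a_1<_i x_2\iff x_2'<_i a_2$ in hand, normalise to $a_1<_i x_2$ and $x_2'<_i a_2$ and split on the position of $b$: if $b<_i a_1$ then $b<_i a_1<_i x_2$ puts $a_1b\in\C_i$; if $a_2<_i b$ then $x_2'<_i a_2<_i b$ puts $a_2b\in\C_i$; if $a_1<_i b<_i a_2$ (using $a_1\sim a_2$) or if $a_2<_i a_1$ (giving $x_2'<_i a_2<_i a_1<_i x_2$) then both land in $\C_i$. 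This already gives ``at least one in each $\C_i$'', and $\C_1\cap\C_2=\emptyset$ then forces ``exactly one'' and rules out the two ``both'' sub-cases. Your contradiction scheme via $N_Y(b)$ lying ``entirely left and entirely right'' of some $a_j$ is not how the obstruction arises.

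For~(2.2) as you set it up ($b<_i a_1$, $a_1b\notin\C_i$, $a_1y_2\in\C_i$), the step ``rule out $b<_i Y$ via Lemma~\ref{lem:help-mega}(3)'' does not go through. From $b<_i X_a\cap X_b<_i a_1$ you have $x_2<_i a_1$; reading the rigid pair $\{a_1y_1,x_2y_2\}$ through Lemma~\ref{lem:x_1<x_2iffy_1<y_2} with $a_1y_2\in\C_i$ (take $x=x_2$, $x'=a_1$) gives $x_2<_i Y$ and $a_1<_i y_1$. Thus $Y<_i a_1$ is \emph{false}, so the hypothesis of Lemma~\ref{lem:help-mega}(3) is unavailable, and in fact $b<_i x_2<_i Y$ holds \emph{necessarily}: there is no $y^-\in Y$ with $y^-<_i b$, and Lemma~\ref{lem:completion-properties}(3) cannot be applied as you intend. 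In the paper, the conclusion ``$by\in\C_i$ for all $y\in Y\setminus N_Y(b)$'' is obtained under the \emph{other} hypothesis $a_1y_2\notin\C_i$, and crucially uses the \emph{second} rigid pair $\{a_2y_1',x_2'y_2'\}$ (via the linking equivalence and $a_2y_2'\in\C_i$) to produce a $Y$-vertex $y_1'$ on the far side of $b$; your (2.2) sketch uses only the first rigid pair. Under $a_1y_2\in\C_i$ the paper instead proves the ``$xy\in\C_i$ for $y\in N_Y(b)\setminus N_Y(x)$'' conclusion, trapping $x$ and $y$ between $a_1$ and $b$ (or $a_2$ and $b$). The hypotheses in the statement's (2.1)/(2.2) are swapped relative to the proof; following the labels rather than the proof is why your (2.2) argument cannot close.
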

\begin{proof}
Suppose $\{a_1y_1,x_2y_2\}$ and $\{a_2y_1',x_2'y_2'\}$ are rigid pairs and $a_1y_2$ and $a_2y_2'$ belong to different completions $\C_1$ and $\C_2$. 

We first prove (1). By (2) of Lemma \ref{lem:x1<x2-iff-x'1<x'2}, for all $i\in\{1,2\}$ we have that $a_1<_i x_2$ if and only if $x_2'<_i a_2$. Suppose without loss of generality that $a_1<_i x_2$ and $x_2'<_i a_2$. Then either $a_1<_i a_2$ or $a_2<_i a_1$. 

\begin{itemize}
\item[(i)] Let $a_2<_i a_1$. Then $x_2'<_i a_2<_i a_1<_i x_2$. Since $x_2, x_2'\in N(b)$ then by (1) of Lemma \ref{lem:completion-properties} we have $a_1b, a_2b\in\C_i$.
\item[(ii)] Let $a_1<_i a_2$. If $b<_i a_1$ then $b<_i a_1<_i x_2$. Since $x_2\in N(b)$ then $a_1b\in\C_i$. If $a_2<_i b$ then $x_2'<_i a_2<_i b$, and thus $a_2b\in\C_i$. Moreover, if $a_1<_i b<_i a_2$ then $a_1b, a_2b\in\C_i$.
\end{itemize}

This implies that each completion $\C_1$ and $\C_2$ contains at least one of $a_1b$ and $a_2b$. Since $\C_1\cap\C_2=\emptyset$ then either $a_1b\in\C_2\setminus\C_1$ and $a_2b\in\C_1\setminus\C_2$ or $a_1b\in\C_1\setminus\C_2$ and $a_2b\in\C_2\setminus\C_1$. Therefore, case (i) and case (ii) when $a_1<_i b<_i a_2$ cannot occur.

We now prove (2) for $i=1$. The proof for $i=2$ is analogous. First let $a_1y_2\in\C_1$ and $a_1b\notin\C_1$. Consider the rigid pair $\{a_1y_1,x_2y_2\}$. Suppose without loss of generality that $a_1<_1 x_2$. Since $a_1y_2\in\C_1$ then by (2) of Lemma \ref{lem:x_1<x_2iffy_1<y_2} we know that $y_1<_1 a_1<_1 x_2$. Since $x_2\in N(b)$ and $a_1b\notin\C_1$ then by (2) of Lemma \ref{lem:completion-properties} we have $a_1<_1 b$. Then by (2) of Lemma \ref{lem:location-1-a,b-rigid-free-conditions} we have $a_1<_1 X_a\cap X_b<_1 b$. Moreover, since $y_1<_1 a_1$ then by (2) of Lemma \ref{lem:location-2-a,b-rigid-free-conditions}, we know that $Y<_1 b$, and thus $y_1<_1 a_1<_1 N_Y(b)<_1 b$.

For all $x\in X_a\cap X_b$ and all $y\in N_Y(b)\setminus N_Y(x)$, we have that $a_1<_1 x<_1 b$ and $a_1<_1 y<_1 b$. Therefore, either $x<_1 y<_1 b$ or $a_1<_1 y<_1 x$. Since $a_1, b\in N(x)$, we have that $xy\in\C_1$. 

We know that $y_1<_1 a_1<_1 N_Y(b)$, and thus by (3) of Lemma \ref{lem:completion-properties}, for all $y\in Y\setminus N_Y(a_1)$, we have that $a_1y\in\C_1$. In particular, for all $y\in N_Y(b)\setminus N_Y(a_1)$, $a_1y\in\C_1$.

Consider the rigid pairs $\{a_1y_1,x_2y_2\}$ and $\{a_2y_1',x_2'y_2'\}$. Since $a_1y_2$ and $a_2y_2'$ belong to different completions and $a_1<_1 x_2$ then by (2) of Lemma \ref{lem:x1<x2-iff-x'1<x'2} we have that $x_2'<_1 a_2$. This together with $a_1<_1 X_a\cap X_b<_1 b$ implies that $a_1<_1 x_2'<_1 a_2$. Since $a_2\in N(a_1)$ and $a_1b\notin\C_1$ then $a_1<_1 x_2'<_1 a_2<_1 b$. Moreover, $a_1<_1 N_Y(b)<_1 a_2$. Therefore, for all $y\in N_Y(b)\setminus N_Y(a_2)$ we have that either $x_2'<_1 y<_1 b$ or $a_1<_1 y<_1 x_2'<_1 a_2$. If the latter occurs then $a_2y\in \C_1$. If the former occurs then either $x_2'<_1 y<_1 a_2<_1 b$ or $x_2'<_1 a_2<_1 y<_1 b$. Since $x_2'\in N(b)$ then $a_2y\in\C_1$.    
This implies that, for all $x\in X_a\cup X_b$ and all $y\in N_Y(b)\setminus N_Y(x)$, we have that $xy\in\C_1$.

We now prove (2.2). If $a_1y_2\notin\C_i$ and $a_1b\notin\C_i$ then for all $x\in Xa\cup X_b$ and all $y\in N_Y(b)\setminus N_Y(x)$ we have $xy\in\C_i$.

Let $a_1y_2\notin\C_1$ and $a_1b\notin\C_1$. Consider the rigid pair $\{a_1y_1,x_2y_2\}$. Suppose without loss of generality that $a_1<_1 x_2$. Since $a_1y_2\notin\C_1$ then by (1) of Lemma \ref{lem:x_1<x_2iffy_1<y_2} we know that $a_1<_1 x_2<_1 y_2$ and $a_1<_1 Y$. This together with (1) of Lemma \ref{lem:location-1-a,b-rigid-free-conditions} implies that $N_Y(a_1)<_1 b$.

Now consider the rigid pairs $\{a_1y_1,x_2y_2\}$ and $\{a_2y_1',x_2'y_2'\}$. Since $a_1y_2$ and $a_2y_2'$ belong to different completions and $a_1<_1 x_2$ then by (2) of Lemma \ref{lem:x1<x2-iff-x'1<x'2} we have that $x_2'<_1 a_2$. This together with $a_1<_1 X_a\cap X_b<_1 b$ implies that $a_1<_1 x_2'<_1 a_2$. Moreover, since $a_1y_2\notin\C_1$ then $a_2y_2'\in\C_1$. Therefore, by (1) of Lemma \ref{lem:x1<x2-iff-x'1<x'2} we have that either $x_2'<_1 y_2'<_1 a_2<_1 y_1'$ or $x_2'<_1 a_2<_1 y_2'<_1 y_1'$. If $y_1'<_1 b$ then since $x_2'\in N(b)$ we have $x_2'y_1, a_2y_2'\in\C_1$ which contradicts Proposition \ref{prop:completion-empty}. This implies that $b<_1 y_1'$, and thus $N_Y(a_1)<_1 b<_1 y_1'$. Therefore, by (3) of Lemma \ref{lem:completion-properties} for all $y\in Y\setminus N_Y(b)$ we have that $by\in\C_1$.
\end{proof}
\subsection{Necessity of Condition (1) of Theorem \ref{thm:chp3-main-result}}
\label{subsec:chp3-necessity-coloring}
In this subsection, we will prove that if $G$ is a square geometric graph, as given in Assumption \ref{ass:1}, then there exits a proper 2-coloring of $\tilde{G}$ such that all vertices of $\mathcal{A}$ are red and all vertices of $\mathcal{B}$ are blue. 
\begin{lemma}\label{lem:mega-lemma}
Let $G$ be a square geometric $B_{a,b}$-graph as in Assumption \ref{ass:nec-condition}. Then
either $\mathcal{A}\subseteq\C_1$ and $\mathcal{B}\subseteq\C_2$, or $\mathcal{A}\subseteq \C_2$ and $\mathcal{B}\subseteq\C_1$.
\end{lemma}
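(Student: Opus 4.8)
The goal is to show that the two sets $\mathcal A$ and $\mathcal B$ cannot both meet a single completion; one of them lies entirely in $\C_1$ and the other entirely in $\C_2$. The proof has two halves: first, that $\mathcal A$ itself is "monochromatic" (contained in a single $\C_i$), and likewise $\mathcal B$; second, that the $\C_i$ chosen for $\mathcal A$ is different from the one chosen for $\mathcal B$.

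\medskip

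\textbf{Step 1 (each of $\mathcal A$, $\mathcal B$ lies in one completion).} Let $ay_2, a'y_2'\in\mathcal A$ be two $a$-vertices that are non-isolated in $\tilde G$, so each is a chord of some rigid pair, say $\{ay_1,x_2y_2\}$ and $\{a'y_1',x_2'y_2'\}$. If $a=a'$, then Lemma~\ref{lem:help-mega}(2) directly says $ay_2$ and $a'y_2'$ lie in the same completion. If $a\neq a'$, suppose for contradiction they lie in different completions; then Lemma~\ref{lem:help-mega-2}(1) forces each of $\C_1,\C_2$ to contain exactly one of $ab,a'b$ for every $b\in X_b\setminus X_a$ (Assumption~\ref{ass:1} guarantees such a $b$ exists with the required non-containment of neighborhoods). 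Combined with Lemma~\ref{lem:help-mega}(1) — which says that if $ay_2$ and $by_2'$ (for a $b$-vertex) lie in the same $\C_i$ then $ab\in\C_i$ — one derives a contradiction with $\C_1\cap\C_2=\emptyset$. Thus all of $\mathcal A$ is in a single $\C_i$; symmetrically, all of $\mathcal B$ is in a single $\C_j$.

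\medskip

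\textbf{Step 2 ($\mathcal A$ and $\mathcal B$ get opposite completions).} Suppose for contradiction that $\mathcal A\subseteq\C_1$ and $\mathcal B\subseteq\C_1$. Pick $ay_2\in\mathcal A$ with $ay_2\sim^*$ some non-$a$-vertex (this exists by definition of $\mathcal A$), and $by_2'\in\mathcal B$. By the last clause of Assumption~\ref{ass:1}, $E(\tilde G)$ contains an edge neither of whose ends is an $a$-vertex or $b$-vertex; call its ends $x_1y_2^\star\sim^* x_2y_1^\star$, arising from a rigid pair $\{x_1y_1^\star,x_2y_2^\star\}$ with $x_1,x_2\in X_a\cap X_b$. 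By Proposition~\ref{prop:completion-empty} these two chords lie in different completions, so one of them, say $x_1y_2^\star\in\C_2$. Now I use Lemma~\ref{lem:help-mega}(1) (and its $b$-analogue) together with Lemmas~\ref{lem:location-1-a,b-rigid-free-conditions}–\ref{lem:location-2-a,b-rigid-free-conditions} to locate $a$, $b$, the core clique $X_a\cap X_b$, and $Y$ in the order $<_1$: having $\mathcal A,\mathcal B\subseteq\C_1$ should squeeze both $X_a\cap X_b$ and all of $Y$ strictly between $a$ and $b$ in $<_1$, i.e. $a<_1 X_a\cap X_b<_1 b$ and $a<_1 Y<_1 b$. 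Then Lemma~\ref{lem:help-mega}(3) applies and yields that \emph{every} edge of $E(\tilde G)$ has an end that is an $a$-vertex or a $b$-vertex — contradicting the chosen edge $x_1y_2^\star\sim^* x_2y_1^\star$. The symmetric assumption $\mathcal A,\mathcal B\subseteq\C_2$ is excluded the same way, completing the proof.

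\medskip

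\textbf{Main obstacle.} The delicate part is Step 2: extracting the order relations $a<_1 X_a\cap X_b<_1 b$ and $a<_1 Y<_1 b$ from the hypothesis $\mathcal A,\mathcal B\subseteq\C_1$. One must carefully run Lemma~\ref{lem:help-mega}(1) against a $b$-vertex to force $ab$ into $\C_1$ — but $ab\in\C_1$ is only half the story; then Lemmas~\ref{lem:location-1-a,b-rigid-free-conditions} and \ref{lem:location-2-a,b-rigid-free-conditions} give the placement of $X_a\cap X_b$ and the "$b<_1 Y$ or $Y<_1 a$"-type dichotomies, and one has to rule out the wrong branch of each dichotomy using the non-isolated-vertex hypotheses ($0<|N_Y(v)|<|Y|$) and the existence of the special edge from Assumption~\ref{ass:1}. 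Keeping track of which chord of each rigid pair sits in which completion, and invoking the right part of Lemma~\ref{lem:completion-properties} at each turn, is where the bookkeeping is heaviest; everything else is a routine application of the auxiliary lemmas already established.
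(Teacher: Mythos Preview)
Your overall two-step structure matches the paper's, but both steps contain genuine gaps.

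In Step~1, for distinct $a,a'\in X_a\setminus X_b$, you invoke Lemma~\ref{lem:help-mega}(1) to finish. That lemma requires a rigid pair $\{by_1',x_2'y_2'\}$ with $b\in X_b\setminus X_a$, and you need control over which completion the chord $by_2'$ lies in. When $\mathcal B=\emptyset$ (precisely the case the paper isolates as its Case~1) there is no such $b$-chord available with a known completion, and the non-containment clause of Assumption~\ref{ass:1} does not by itself produce a rigid pair containing $b$. The paper does \emph{not} use Lemma~\ref{lem:help-mega}(1) here; it uses the finer parts (2.1)--(2.2) of Lemma~\ref{lem:help-mega-2}. In the two sub-cases $a_1b\in\C_1,\,a_2b\in\C_2$ versus $a_1b\in\C_2,\,a_2b\in\C_1$, those parts force certain non-edges $by$ (respectively $xy$ with $y\in N_Y(b)$) to lie in \emph{both} completions, whence $N_Y(b)=Y$ (respectively $N_Y(b)\subseteq N_Y(x)$ for all $x$), each excluded by Assumption~\ref{ass:1}. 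Your proposed combination of \ref{lem:help-mega-2}(1) with \ref{lem:help-mega}(1) does not reach this.

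In Step~2 there are two problems. First, an index slip: from $\mathcal A,\mathcal B\subseteq\C_1$ and Lemma~\ref{lem:help-mega}(1) you obtain $ab\in\C_1$, hence $ab\notin\C_2$; Lemmas~\ref{lem:location-1-a,b-rigid-free-conditions}--\ref{lem:location-2-a,b-rigid-free-conditions} then constrain $<_2$, not $<_1$. Second, and more substantively, you propose to \emph{rule out} the wrong branch of the dichotomy ``$a<_2 Y$ or $Y<_2 b$'' so as to obtain both inequalities and invoke Lemma~\ref{lem:help-mega}(3). That branch cannot be ruled out; the paper treats it by a separate argument. Taking (w.l.o.g.) $a<_2 b$ and $a<_2 Y$: if in addition $Y<_2 b$, then Lemma~\ref{lem:help-mega}(3) contradicts the special edge in Assumption~\ref{ass:1}. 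If instead some $y_3\in Y$ satisfies $b<_2 y_3$, then combining $N_Y(a)<_2 b$ (from Lemma~\ref{lem:location-1-a,b-rigid-free-conditions}) with $b<_2 y_3$ and part~(3) of Lemma~\ref{lem:completion-properties} forces $by'\in\C_2$ for the chosen $by'\in\mathcal B$, contradicting $\mathcal B\subseteq\C_1$ directly. Your suggested route (via $0<|N_Y(v)|<|Y|$ and the special edge alone) does not cover this second sub-case.
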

\begin{proof}
Let $G$ be as in Assumption \ref{ass:nec-condition}. We first prove (1). Let $a\in X_a\setminus X_b$ and $ay, ay'\in \mathcal{A}$. Then there are rigid pairs $\{ay_1, x_2y_2\}$ and $\{ay_1',x_2'y_2'\}$. By (2) of Lemma \ref{lem:help-mega}, we know that the $a$-vertices, $ay_2$ and $ay_2'$, belong to the same completions. This implies that all the $a$-vertices of $\mathcal{A}$ belong to the same completion. An analogous discussion and (2) of Lemma \ref{lem:help-mega} prove that all the $a_2$-vertices of $\mathcal{A}$ belong to the same completion, and all the $b_1$-vertices of $\mathcal{B}$ belong to the same completion. Now consider the following cases:

\vspace{0.2cm}

\vspace{0.2cm}
\noindent{\bf Case 1.} $\mathcal{A}\neq\emptyset$ and $\mathcal{B}=\emptyset$, or $\mathcal{A}=\emptyset$ and $\mathcal{B}\neq\emptyset$: We only discuss the former case. The proof of  the case $\mathcal{A}=\emptyset$ and $\mathcal{B}\neq\emptyset$ follows by symmetry of $X_a$ and $X_b$. Let $a_1, a_2$ be two distinct vertices of $X_a\setminus X_b$. By the above discussion, we know that all the $a_1$-vertices in $\mathcal{A}$ belong to the same completion, and all the $a_2$-vertices in $\mathcal{A}$ belong to the same completion. We now prove that all the $a_2$-vertices and $a_1$-vertices of $\mathcal{A}$ belong to the same completion.
So suppose that $a_1y_2, a_2y_2'\in\mathcal{A}$. Then there are rigid pairs $\{a_1y_1,x_2y_2\}$ and $\{a_2y_1',x_2'y_2'\}$. Suppose to the contrary that $a_1y_2$ and $a_2y_2'$ belong to different completions. Without loss of generality let $a_1y_2\in\C_1$ and $a_2y_2'\in\C_2$. Then by (1) of Lemma \ref{lem:help-mega-2} we know that, for $b\in X_b\setminus X_a$, either $a_1b\in\C_1\setminus\C_2$ and $a_2b\in\C_2\setminus\C_1$ or $a_1b\in\C_2\setminus\C_1$ and $a_2b\in\C_1\setminus\C_2$. 

First let $a_1b\in\C_1\setminus\C_2$ and $a_2b\in\C_2\setminus\C_1$. Since $a_1b\in\C_1$ and $a_1y_2\in\C_1$ then, by (2.1) of Lemma \ref{lem:help-mega-2}, for all $y\in Y\setminus N_Y(b)$, we have that $by\in\C_1$. Moreover, $a_2b\in\C_2$ and $a_2y_2'\in\C_2$, and thus by (2.1) of Lemma \ref{lem:help-mega-2} for all $y\in Y\setminus N_Y(b)$ we have that $by\in\C_2$. This implies that, for all $y\in Y\setminus N_Y(b)$, we have that $by\in\C_1\cap\C_2$. Since $\C_1\cap\C_2=\emptyset$ then $Y\setminus N_Y(b)=\emptyset$, which implies that $N_Y(b)=Y$. This case is not part of Assumption \ref{ass:nec-condition}.

Now let $a_1b\in\C_2\setminus\C_1$ and $a_2b\in\C_1\setminus\C_2$. Since $a_1b\in\C_2$ and $a_1y_2\notin\C_2$ then by (2.2) of Lemma \ref{lem:help-mega-2} for all $x\in X_a\cup X_b$ and all $y\in N_Y(b)\setminus N_Y(x)$ we have that $xy\in\C_2$.
Moreover, $a_2b\in\C_1$ and $a_2y_2'\notin\C_1$, and thus by (2.2) of Lemma \ref{lem:help-mega-2} for all $x\in X_a\cup X_b$ and all $y\in N_Y(b)\setminus N_Y(x)$ we have that $xy\in\C_2$. This implies that for all $x\in X_a\cup X_b$ and all $y\in N_Y(b)\setminus N_Y(x)$ we have that $xy\in\C_1\cap\C_2$. Since $\C_1\cap\C_2=\emptyset$ then, for all $x\in X_a\cup X_b$, we have that $N_Y(b)\setminus N_Y(x)=\emptyset$, which implies that for all $x\in X_a\cup X_b$, $N_Y(b)\subseteq N_Y(x)$. This case is not part of Assumption \ref{ass:nec-condition}. Therefore, for all $a\in X_a\setminus X_b$, we have that all vertices of $\mathcal{A}$ belong to the same completion.

\vspace{0.2cm}
\noindent {\bf Case 3.} $\mathcal{A}\neq\emptyset$ and  $\mathcal{B}\neq\emptyset$: First recall that, as we discussed in the very beginning of the proof, for all $a\in X_a\setminus X_b$ and all $b\in X_b\setminus X_a$, all the $a$-vertices  of $\mathcal{A}$ belong to the same completion, and all the $b$-vertices of $\mathcal{B}$ belong to the same completion. We now prove that an $a$-vertex of $\mathcal{A}$, and a $b$-vertex of $\mathcal{B}$ belong to different completions. 

Suppose that $ay\in\mathcal{A}$, and $by'\in\mathcal{B}$.  
Then there are rigid pairs $\{ay_1,xy\}$ and $\{by_2,x'y'\}$ in $G$ with $x,x'\in X_a\cap X_b$. By contradiction let $ay\in\C_1$ and $by'\in\C_1$. 
By (1) of Lemma \ref{lem:help-mega} we know that $ab\in\C_1$, and thus by Proposition \ref{prop:completion-empty} we have that $ab\notin\C_2$. Suppose without loss of generality that $a<_2 b$. Then by (2) of Lemma \ref{lem:location-1-a,b-rigid-free-conditions} we have that $a<_2 X_a\cap X_b<_2 b$, $a<_2 N_Y(b)$, and $N_Y(a)<_2 b$. Moreover, by (2) of Lemma \ref{lem:location-2-a,b-rigid-free-conditions} either $a<_2 Y$ or $Y<_2 b$. Without loss of generality let $a<_2 Y$. First suppose that we also have $Y<_2 b$. This implies that $a<_2 X_a\cap X_b<_2 b$ and $a<_2 Y<_2 b$. Then by (3) of Lemma \ref{lem:help-mega} we know that all edges of $E(\tilde{G})$ has either an end in $a$-vetices or an end in $b$-vertices. This case is excluded in Assumption \ref{ass:1}. 

Therefore, $Y<_2 b$ cannot occur, and thus there is a vertex $y_3\in Y$ such that $b<_2 y_3$. We also have $N_Y(a)<_2 b$. Then $N_Y(a)<_2 b<_2 y_3$. Since $y'\in Y\setminus N_Y(b)$ then, by (3) of Lemma \ref{lem:completion-properties}, we have that $by'\in\C_2$, which contradicts our assumption ($by'\in\C_1$).
This proves that either $\mathcal{A}\subseteq\C_1$ and $\mathcal{B}\subseteq\C_2$, or $\mathcal{A}\subseteq\C_2$ and $\mathcal{B}\subseteq\C_1$.
\end{proof}
The necessity of Condition (1) of Theorem \ref{thm:chp3-main-result} is a direct result of Lemma \ref{lem:mega-lemma}.
%
%
\subsection{Necessity of Condition (2) of Theorem \ref{thm:chp3-main-result}}
\label{subsec:chap3:Nec-rigid-free}
We devote this subsection to the proof of necessity of Condition (2) of Theorem \ref{thm:chp3-main-result}. As we mentioned in the beginning of the section, the rigid-free conditions exclude specific structures of neighborhoods of vertices of $a\in X_a\setminus X_b$ and $b\in X_b\setminus X_a$. We prove in this subsection that the occurrence of these structures does not allow the existence of linear orders $<_1$ and $<_2$ as in Equation \ref{eq:2-dim-geo}. Indeed, if such structures occur then for every pair of linear orders $<_1$ and $<_2$ at least one of the non-edges of form $ab$ belongs to $\C_1\cap\C_2$, where $\C_1$ and $\C_2$ are completions corresponding to $<_1$ and $<_2$, respectively. 

We prove the necessity of Condition (2) of Theorem \ref{thm:chp3-main-result} in stages through a number of lemmas. These lemmas collect information on non-edges of form $ab$, and the location of the vertices  $a\in X_a\setminus X_b$, and $b\in X_b\setminus X_a$ in linear orders $<_1$ and $<_2$. The main tools of all proofs of this subsection are Lemma \ref{lem:completion-properties} and Definition \ref{def:completion} (definition of completion). 
%

%
%
\begin{lemma}\label{lem:help1-rigid-type1}
Suppose that $G$ is a square geometric $B_{a,b}$-graph.
\begin{itemize}
\item[(1)] Let $\{ay_1,x_2y_2\}$ be a rigid pair with $y_1, y_2\in N_Y(b)$.
Then $ay_2$ and $ab$ belong to the same completion. 
%
\item[(2)] Let $\{by_1',x_2'y_2'\}$ be a rigid pair with $y_1',y_2'\in N_Y(a)$.
Then $by_2'$ and $ab$ belong to the same completion.
\end{itemize}
\end{lemma}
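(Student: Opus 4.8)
The plan is to prove statement (1); statement (2) then follows by exchanging the roles of $a$ and $b$ (equivalently, of $X_a$ and $X_b$). So let $\{ay_1,x_2y_2\}$ be a rigid pair with $y_1,y_2\in N_Y(b)$, and let $\C_1,\C_2$ be the completions of the linear orders $<_1,<_2$ witnessing that $G$ is square geometric. By Proposition \ref{prop:completion-empty}, the chord $ay_2$ of this rigid pair lies in exactly one of $\C_1,\C_2$; say $ay_2\in\C_i$. I want to show $ab\in\C_i$ as well. By symmetry of the two completions, I may fix $i=1$, so assume $ay_2\in\C_1$.

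First I would pin down the relative position of $a$ and $x_2$ in $<_1$. Without loss of generality assume $a<_1 x_2$ (the case $x_2<_1 a$ is analogous, using part (2) of Lemma \ref{lem:x_1<x_2iffy_1<y_2} instead of part (1)). Since $ay_2\in\C_1$ and $\{ay_1,x_2y_2\}$ is a rigid pair, part (1) of Lemma \ref{lem:x_1<x_2iffy_1<y_2} applies with the chord $x_2y_1\in\C_1$ playing the role there — note $ay_2\in\C_1$ forces $x_2y_1\in\C_1$ too, by the same Proposition \ref{prop:completion-empty} reasoning on the other chord. Thus I get either $a<_1 x_2<_1 y_1<_1 y_2$ or $a<_1 y_1<_1 x_2<_1 y_2$, and in particular $a<_1 Y$, hence $a<_1 y$ for every $y\in N_Y(b)$, and also $a <_1 x_2 <_1 y_2$. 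The key extra hypothesis is $y_2\in N_Y(b)$, so $b$ is adjacent to $y_2$.

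Now I would run a short case analysis on where $b$ sits relative to $a$ and $x_2$ in $<_1$. If $ab\in\C_1$ we are done, so suppose toward a contradiction that $ab\notin\C_1$. Since $a<_1 Y$ and, say, $y_1\in N_Y(b)\setminus N_Y(a)$ wait—$y_1\in N_Y(a)$, so instead use that $b$ and $a$ are distinct and consider: if $a<_1 b$, then because $x_2\in N_Y(b)^{c}$... hmm, $x_2\in X$, not $Y$. Let me instead argue directly: since $ab\notin\C_1$ and $a<_1 Y$ (so in particular $a<_1 y_2$ with $y_2\in N(b)$), part (2) of Lemma \ref{lem:location-1-a,b-rigid-free-conditions} would give, in the subcase $a<_1 b$, that $N_Y(a)<_1 b$ and $a<_1 N_Y(b)<_1 b$... but we also showed $a<_1 x_2 <_1 y_2$, and $y_2\in N_Y(b)$, so $y_2<_1 b$; combined with $x_2\sim y_2$ and $x_2\sim b$ this should force $ab\in\C_1$ via part (1) or (4) of Lemma \ref{lem:completion-properties}, a contradiction. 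The subcase $b<_1 a$ contradicts $a<_1 Y$ together with $y_1\in N_Y(b)$ and $y_1 \sim y_2$ only if $y_1$ is on the correct side; one cleans this up using part (4) of Lemma \ref{lem:completion-properties} with $u_1=x_2\in N(y_2)$, wait I need $w\notin N(z)$ with $w,z=a,b$. So: $a\notin N(b)$, $x_2\in N(b)$, and $y_2\in N(a)$? No, $y_2\sim x_2$ not necessarily $y_2\sim a$. Since $a<_1 x_2$ and $x_2\in N(b)$, if also $y_1\in N(a)$ with $y_1<_1$ something $<_1 b$... The main obstacle will be assembling exactly the right instance of Lemma \ref{lem:completion-properties} (parts (1),(2),(4)) in each subcase so that a vertex adjacent to $b$ and a vertex adjacent to $a$ straddle $a,b$ in $<_1$; the adjacencies $x_2\sim b$, $a\sim y_1$, $a\sim x_2$, $x_2\sim y_2$, $b\sim y_1$, $b\sim y_2$ are all available, and the order constraints $a<_1 Y$ and $a<_1 x_2<_1 y_2$ should be enough to corner $b$ into a position producing $ab\in\C_1$ in every case, contradicting $ab\notin\C_1$.
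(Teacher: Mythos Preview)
Your proposal contains a genuine error at the outset that derails the argument. You write that ``$ay_2\in\C_1$ forces $x_2y_1\in\C_1$ too, by the same Proposition \ref{prop:completion-empty} reasoning.'' This is exactly backwards: Proposition \ref{prop:completion-empty} says each completion contains \emph{exactly one} chord of a rigid pair, so $ay_2\in\C_1$ forces $x_2y_1\in\C_2$, not $\C_1$. Consequently, when you invoke Lemma \ref{lem:x_1<x_2iffy_1<y_2} you are applying the wrong part. With the correct matching (take $x=x_2$, $y=y_2$, $x'=a$, $y'=y_1$, so that $x'y=ay_2\in\C_1$), the case $a<_1 x_2$ is the case $x'<_1 x$, and part (2) of Lemma \ref{lem:x_1<x_2iffy_1<y_2} gives either $y_1<_1 y_2<_1 a<_1 x_2$ or $y_1<_1 a<_1 y_2<_1 x_2$. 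In particular $y_1<_1 a$, which directly contradicts your conclusion $a<_1 Y$. This wrong ordering is why the subsequent case analysis on the position of $b$ never closes: you are trying to trap $b$ on the wrong side of $a$.

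Once the correct ordering is in hand, the proof is immediate and no contradiction argument or appeal to Lemma \ref{lem:location-1-a,b-rigid-free-conditions} is needed. From either of the two displayed chains you read off $y_1<_1 a<_1 x_2$. Now $y_1\in N_Y(b)$ by hypothesis and $x_2\in N(b)$ (since $x_2$ is adjacent to $a\in X_a\setminus X_b$, hence $x_2\in X_a$, and $x_2$ participates in a rigid pair with $a$, so $x_2\in X_a\cap X_b\subseteq N(b)$). Thus $a$ sits strictly between two neighbours of $b$ in $<_1$, and part (1) of Lemma \ref{lem:completion-properties} gives $ab\in\C_1$ directly. The case $x_2<_1 a$ is symmetric via part (1) of Lemma \ref{lem:x_1<x_2iffy_1<y_2}, yielding $x_2<_1 a<_1 y_1$ and the same conclusion. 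This is the paper's route, and it avoids entirely the tangle of subcases you entered.
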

\begin{proof}
We only prove (1). The proof of (2) is analogous. Suppose $\{ay_1,x_2y_2\}$ is a rigid pair with $y_1, y_2\in N_Y(b)$ and $ay_2\in\C_1$. First suppose $a<_1 x_2$. Then by (2) of Lemma \ref{lem:x_1<x_2iffy_1<y_2} we have either  $y_1<_1 y_2 <_1 a<_1 x_2$ or $y_1<_1 a<_1 y_2 <_1 x_2$. In both cases, $y_1<_1 a<_1 x_2$. Since $x_2, y_1\in N(b)$, by part (1) of Lemma \ref{lem:completion-properties} we have $ab\in\C_1$. 

Now suppose that $x_2<_1 a$. Then, by Lemma \ref{lem:x_1<x_2iffy_1<y_2}, we have that either $ x_2<_1 a<_1 y_2<_1 y_1 $ or $x_2<_1 y_2<_1 a<_1 y_1$. Thus $x_2<_1 a<_1 y_1$. Again since $x_2, y_1\in N(b)$, then by part (1) of Lemma \ref{lem:completion-properties}, we have that $ab\in\C_1$. 
If $ay_2\in\C_2$, then the same argument with $<_1$ replaced by $<_2$ shows $ab\in\C_2$. This finishes the proof.
\end{proof}

In what follows we prove that rigid-free conditions of Definition \ref{def:rigid-free-conditions} hold. Indeed, we prove that  structures (1) and (2) of Figure \ref{fig:all}, and structures (3) and (4)can not occur simultaneously. {\sl I.e.} if one of structures (1) and (2) occurs then non of structures (3) and (4) can occur and vice versa.
\begin{figure}[H]
\centerline
{\includegraphics[width=0.70\textwidth]{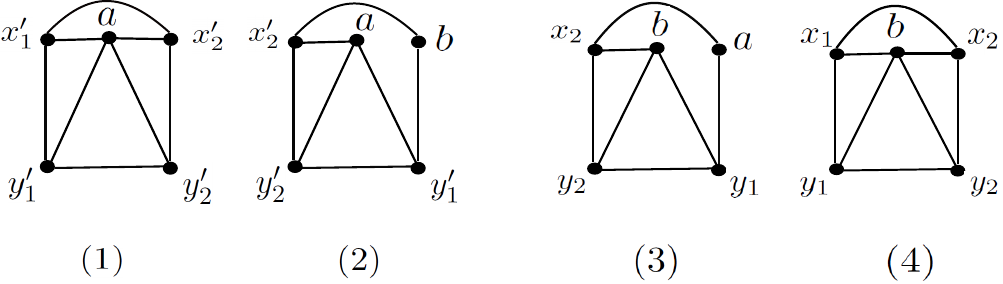}}
\caption{The structures (1), (2) and (3), (4) can not occur simultaneously.\label{fig:all}}
\end{figure} 

\begin{corollary}
Suppose that $G$ is a square geometric graph as in Assumption \ref{ass:nec-condition}. If there is a rigid pair $\{ay_1,x_2y_2\}$  with $y_1, y_2\in N_Y(b)$ then there is no rigid pair $\{by_1',x_2'y_2'\}$ with $y_1',y_2'\in N_Y(a)$. Equivalently, if there is a rigid pair $\{by_1',x_2'y_2'\}$ with $y_1',y_2'\in N_Y(a)$ then there is no rigid pair $\{ay_1,x_2y_2\}$  with $y_1, y_2\in N_Y(b)$.
\end{corollary}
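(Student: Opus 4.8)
The statement is a contrapositive-style combination of the two parts of Lemma \ref{lem:help1-rigid-type1}, so the plan is to derive a contradiction from the simultaneous existence of both rigid pairs. Suppose, for contradiction, that $G$ is square geometric as in Assumption \ref{ass:nec-condition} and that there exist both a rigid pair $\{ay_1,x_2y_2\}$ with $y_1,y_2\in N_Y(b)$ and a rigid pair $\{by_1',x_2'y_2'\}$ with $y_1',y_2'\in N_Y(a)$. Fix the linear orders $<_1,<_2$ and completions $\C_1,\C_2$ guaranteed by Assumption \ref{ass:nec-condition}, so in particular $\C_1\cap\C_2=\emptyset$.

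First I would apply part (1) of Lemma \ref{lem:help1-rigid-type1} to the rigid pair $\{ay_1,x_2y_2\}$: its chord $ay_2$ lies in exactly one completion (Proposition \ref{prop:completion-empty}), say $ay_2\in\C_j$, and the lemma gives $ab\in\C_j$. Then I would apply part (2) of Lemma \ref{lem:help1-rigid-type1} to the rigid pair $\{by_1',x_2'y_2'\}$: its chord $by_2'$ lies in exactly one completion, say $by_2'\in\C_k$, and the lemma gives $ab\in\C_k$. Hence $ab\in\C_j\cap\C_k$; since completions are disjoint, we must have $j=k$. So $ab$ belongs to a single completion — call it $\C_i$ — and both $ay_2\in\C_i$ and $by_2'\in\C_i$.

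The remaining step is to turn this into a contradiction. The cleanest route is to observe that $ab$, being a non-edge of $G$, cannot lie in $\C_1\cap\C_2$, but nothing above yet forbids it lying in just $\C_i$ — so the contradiction must instead come from combining $ay_2\in\C_i$ with $by_2'\in\C_i$ via part (1) of Lemma \ref{lem:help-mega}. Indeed, $\{ay_1,x_2y_2\}$ and $\{by_1',x_2'y_2'\}$ are rigid pairs whose chords $ay_2$ and $by_2'$ lie in the same completion $\C_i$, so Lemma \ref{lem:help-mega}(1) gives $ab\in\C_i$ — which we already know, so this alone is not yet the contradiction either. The actual obstruction, and the step I expect to be the crux, is that we have now forced $ay_2,by_2'\in\C_i$ \emph{and} $ab\in\C_i$ together; I would examine the location of $a$, $b$, and $X_a\cap X_b$ in $<_i$ using Lemmas \ref{lem:location-1-a,b-rigid-free-conditions} and \ref{lem:location-2-a,b-rigid-free-conditions} (applied to whichever of $ab\notin\C_1$ or $ab\notin\C_2$ holds — note $ab\in\C_i$ forces $ab\notin\C_{3-i}$), and show that the constraints imposed by the chord $ay_2\in\C_i$ (via Lemma \ref{lem:x_1<x_2iffy_1<y_2}, giving $y_1<_i a$ or $a<_i y_2$ type bounds) are incompatible with those imposed by $by_2'\in\C_i$, since $y_1,y_2\in N_Y(b)$ and $y_1',y_2'\in N_Y(a)$ pin the relevant $Y$-vertices on both sides of $a$ and $b$ simultaneously. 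This will contradict either $\C_1\cap\C_2=\emptyset$ or one of the exclusions in Assumption \ref{ass:1}, completing the proof; the symmetric phrasing of the corollary then follows by swapping the roles of $a$ and $b$ (and $X_a$, $X_b$).
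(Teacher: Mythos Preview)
Your first half is correct: both applications of Lemma~\ref{lem:help1-rigid-type1} are valid, and the deduction that $ay_2$ and $by_2'$ must lie in the \emph{same} completion $\C_i$ (else $ab\in\C_1\cap\C_2$) is sound. But the argument then stalls --- you acknowledge twice that what you have ``is not yet the contradiction,'' and the proposed finish via Lemmas~\ref{lem:location-1-a,b-rigid-free-conditions} and~\ref{lem:location-2-a,b-rigid-free-conditions} is a plan, not a proof: you never specify which inequalities become incompatible or why.

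The missing ingredient is Lemma~\ref{lem:mega-lemma}. Since $ay_2\sim^* x_2y_1$ in $\tilde G$ and $x_2y_1$ is not an $a$-vertex, we have $ay_2\in\mathcal A$; likewise $by_2'\sim^* x_2'y_1'$ gives $by_2'\in\mathcal B$. Lemma~\ref{lem:mega-lemma} then forces $ay_2$ and $by_2'$ into \emph{different} completions, immediately contradicting your conclusion that $j=k$. This is precisely the paper's argument, just run in the opposite order: the paper first invokes Lemma~\ref{lem:mega-lemma} to place $ay_2$ and $by_2'$ in distinct completions, and then applies both parts of Lemma~\ref{lem:help1-rigid-type1} to obtain $ab\in\C_1\cap\C_2=\emptyset$. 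Either ordering works, but the location lemmas you reach for would at best re-derive a special case of Lemma~\ref{lem:mega-lemma} from scratch.
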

\begin{proof}
Suppose by contradiction that there are rigid pairs $\{ay_1,x_2y_2\}$  with $y_1, y_2\in N_Y(b)$ and $\{by_1',x_2'y_2'\}$ with $y_1',y_2'\in N_Y(a)$. Then $ay_2\sim^* x_2y_1$ and $by_2'\sim^* x_2'y_1'$, and thus $ay_2\in\mathcal{A}$ and $by_2'\in\mathcal{B}$. By part (1) of Lemma \ref{lem:mega-lemma}, we know that $ay_2$ and $by_2'$ belong to different completions $\C_1$ and $\C_2$. By Lemma \ref{lem:help1-rigid-type1}, this implies that $ab$ belongs to both completions $\C_1$ and $\C_2$.
%
This contradicts $\C_1\cap\C_2=\emptyset$. 
\end{proof}

We now prove that structures (1) and (4), as shown in Figure \ref{fig:all}, do not occur simultaneously.
%
%
%
\begin{lemma}\label{lem:help2-rigid-type-1}
Let $\{x_1y_1,x_2y_2\}$ be a rigid pair with $x_1, x_2\in X_a\cap X_b$. For $v\in X_a\cup X_b$, if $y_1, y_2\in N_Y(v)$ then, for all $i\in \{1,2\}$, either $x_1<_i v<_i y_2$ or $y_1<_i v<_i x_2$. Moreover, if $x_1<_i v<_i y_2$ then $x_1<_i Y$ and $X_a\cap X_b <_i y_2$, and if $y_1<_i v<_i x_2$ then $y_1<_i X_a\cap X_b$ and $Y<_i x_2$.
\end{lemma}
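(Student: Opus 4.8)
\textbf{Proof plan for Lemma \ref{lem:help2-rigid-type-1}.}
The plan is to argue about a single linear order $<_i$ (the statement is quantified over $i\in\{1,2\}$, and the argument is identical for both), and to exploit the fact that $\{x_1y_1,x_2y_2\}$ is a rigid pair, so $x_1y_1y_2x_2$ is an induced $4$-cycle: we have $x_1\sim y_1$, $x_2\sim y_2$, $y_1\sim y_2$, $x_1\sim x_2$, but $x_1\nsim y_2$ and $x_2\nsim y_1$. First I would fix $i$ and, without loss of generality (by swapping the roles of $x_1,y_1$ with $x_2,y_2$), assume $x_1<_i x_2$. Since $v\in X_a\cup X_b$ and $x_1,x_2\in X_a\cap X_b\subseteq N(v)$, and since $y_1,y_2\in N_Y(v)$, the vertex $v$ is adjacent to all four of $x_1,x_2,y_1,y_2$; this is the key leverage, because by part (1) of Lemma \ref{lem:completion-properties}, if $v$ lies $<_i$-between two of its neighbors, then $v$ paired with any non-neighbor lying between them is forced into $\C_i$ — and in particular $v$ cannot lie $<_i$-between $x_1$ and $y_2$ with $x_1\nsim y_2$ straddling nothing problematic, wait — more precisely, the non-edges $x_1y_2$ and $x_2y_1$ cannot have $v$'s neighbors straddling them in a way that puts these specific non-edges into $\C_i$; rather, the constraint is that $v$ itself is a common neighbor, so applying Lemma \ref{lem:completion-properties}(1) to $v$ and the non-edge $x_1y_2$: if both $x_1<_i v$ is false in the right way we get a contradiction.

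Let me restructure: the real argument should be a case analysis on the $<_i$-position of $v$ relative to $x_1,x_2,y_1,y_2$. Assuming $x_1<_i x_2$, I would first pin down where $y_1,y_2$ sit. By part (1) of Lemma \ref{lem:completion-properties} applied with the common neighbor relationships coming from the rigid pair, together with the fact (implicit in the setup, via Lemma \ref{lem:x_1<x_2iffy_1<y_2} or directly from Proposition \ref{prop:completion-empty} and the definition of completion) that exactly one of $x_1y_2,x_2y_1$ is in $\C_i$, one obtains that the rigid pair is laid out in $<_i$ in one of the two "interleaved" patterns $x_1<_i x_2<_i y_1<_i y_2$ or $x_1<_i y_1<_i x_2<_i y_2$ (the shape forced by Lemma \ref{lem:x_1<x_2iffy_1<y_2}-type reasoning, using that $x_1\nsim y_2$ forbids $y_2$ from being squeezed between $x_1$ and $x_2$, and $x_2\nsim y_1$ forbids $y_1$ from being squeezed between $x_2$ and $y_2$ on the wrong side). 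In either pattern, $x_1$ is the $<_i$-least of the four and $y_2$ is the $<_i$-greatest. Now I place $v$: since $v$ is adjacent to $x_1$ and to $y_2$, and $v$ is adjacent to everything in $X_a\cap X_b$ and to $y_1,y_2$, the position of $v$ must be such that no non-edge of $G$ with both ends among neighbors-of-$v$ straddled. The cleanest outcome to target is: $v$ lies strictly between $x_1$ and $y_2$ in $<_i$ (it cannot be $<_i$-below $x_1$ because then $x_1$, a neighbor of $v$, together with the non-neighbor... hmm, $x_1$ would need a non-neighbor on its other side — actually $v<_i x_1$ forces, via $v\sim y_1$ and $v\sim x_2$ being on the far side, nothing immediately, so I must be more careful and probably use Lemma \ref{lem:completion-properties}(2) repeatedly).

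Here is the cleaner route I would actually write. With $x_1<_i x_2$ and the interleaved layout established, I claim $v$ cannot be $<_i$-strictly-below $x_1$: if $v<_i x_1$, then since $y_2\in N(v)$ and $v<_i x_1<_i y_2$, Lemma \ref{lem:completion-properties}(1) applied to the non-neighbor pair... no, $x_1$ is a neighbor of $v$, so I instead use $x_1\nsim y_2$ with the common neighbor $v$: $v<_i x_1$ and $v<_i y_2$ with $v\sim x_1$, $v\sim y_2$ — this does not by itself force a contradiction. So the dichotomy "$x_1<_i v<_i y_2$ or $y_1<_i v<_i x_2$" must be proved by ruling out that $v$ sits outside the interval $(x_1,y_2)$ AND ruling out the "middle" positions that violate both clauses; I expect the mechanism is: if $v$ is $\le_i x_1$ or $\ge_i y_2$, one uses the other rigid-pair vertex to find a non-neighbor of $v$ straddled by two neighbors of $v$, contradicting that $\C_i$ contains no... wait, $\C_i$ can contain non-edges. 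The actual contradiction must be with Proposition \ref{prop:completion-empty}: a chord of the rigid pair $\{x_1y_1,x_2y_2\}$ is in exactly one $\C_i$, and pushing $v$ to a bad position forces both chords $x_1y_2$ and $x_2y_1$ into $\C_i$, contradicting that only one is. For the "moreover" clauses: once $x_1<_i v<_i y_2$ is known, $x_1<_i Y$ follows because any $y\in Y$ with $y<_i x_1$ would give $y<_i x_1<_i v$ with $y\sim v$ and... combined with $x_1\nsim$ appropriate vertex, forcing a contradiction via Lemma \ref{lem:completion-properties}, and $X_a\cap X_b<_i y_2$ follows symmetrically using $v\sim y_2$ and that elements of $X_a\cap X_b$ are neighbors of both $x_1$ and $y_1$.

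\textbf{Main obstacle.} The delicate point is establishing the interleaved layout of the rigid pair in $<_i$ and then rigorously ruling out the "outside" and "wrong-middle" positions for $v$ — i.e., showing that a bad placement of $v$ forces \emph{both} chords of the rigid pair into the same completion $\C_i$, contradicting Proposition \ref{prop:completion-empty}. I would expect this to lean heavily on parts (1) and (2) of Lemma \ref{lem:completion-properties} and on Lemma \ref{lem:x_1<x_2iffy_1<y_2}, applied to the vertex $v$ in place of one of the rigid-pair endpoints, exploiting that $v$ is simultaneously adjacent to $x_1,x_2,y_1,y_2$. The symmetric case $x_2<_i x_1$ is handled by swapping indices, which is exactly why the conclusion is stated as the two-sided dichotomy "$x_1<_i v<_i y_2$ or $y_1<_i v<_i x_2$."
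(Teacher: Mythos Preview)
Your overall direction is correct, but you are working much too hard because you overlook that Lemma~\ref{lem:x_1<x_2iffy_1<y_2} already does the entire job. The paper's proof is essentially two lines: fix $i$, use Proposition~\ref{prop:completion-empty} to assume without loss of generality that $x_2y_1\in\C_i$ (so $x_1y_2\in\C_{3-i}$), and then split on whether $x_1<_i x_2$ or $x_2<_i x_1$. In the first case, the ``Moreover'' clause of Lemma~\ref{lem:x_1<x_2iffy_1<y_2}(1) says that every $u\in N(y_2)$ satisfies $x_1<_i u$ and every $w\in N(x_1)$ satisfies $w<_i y_2$; since $v\in N(y_2)\cap N(x_1)$, this immediately gives $x_1<_i v<_i y_2$, and the same clause gives $x_1<_i Y$ and $X_a\cap X_b<_i y_2$ directly. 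The case $x_2<_i x_1$ is symmetric via part (2).

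What you describe as the ``main obstacle'' --- ruling out bad positions of $v$ by showing they would force both chords into the same $\C_i$ --- is precisely how the ``Moreover'' clause of Lemma~\ref{lem:x_1<x_2iffy_1<y_2} was \emph{proved}. So your plan amounts to re-deriving that lemma inside the present proof rather than invoking it. There is no genuine gap in your reasoning, but the long case analysis, the hesitations about which contradiction you are chasing, and the separate treatment of the interleaved layout are all unnecessary once you notice that $v\in N(x_1)\cap N(y_2)$ lets you read the conclusion straight off Lemma~\ref{lem:x_1<x_2iffy_1<y_2}.
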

\begin{proof}
Assume without loss of generality that $i=1$. We know that $x_1y_2$ and $x_2y_1$ belong to different completions. By symmetry of $x_1y_2$ and $x_2y_1$, we may assume that $x_2y_1\in\C_1$ and $x_1y_2\in\C_2$. Since $v\in X_a\cup X_b$ and $x_1\in X_a\cap X_b$ then we have that $v\in N(x_1)$. Moreover $v\in N(y_2)$. First let $x_1<_1 x_2$. Then by (1) of Lemma \ref{lem:x_1<x_2iffy_1<y_2}, we have that $x_1<_1 v<_1 y_2$, 
and $x_1<_1 Y$. Also, since $X_a\cap X_b\subseteq N(x_1)$, by Lemma \ref{lem:x_1<x_2iffy_1<y_2}, we have that $X_a\cap X_b<_1 y_2$.

If $x_2<_1 x_1$ then a similar argument, and (2) of Lemma \ref{lem:x_1<x_2iffy_1<y_2}, proves that $y_1<_1 v<_1 x_2$, $y_1<_1 X_a\cap X_b$ and $Y<_1 x_2$. 
\end{proof}
\begin{proposition}\label{prop:help-rigid-2}
Suppose $G$ is a square geometric graph as in Assumption \ref{ass:nec-condition}. If there is a rigid pair $\{x_1y_1,x_2y_2\}$  with $y_1, y_2\in N_Y(b)$ and $x_1, x_2\in X_a\cap X_b$ then there is no rigid pair $\{x_1'y_1',x_2'y_2'\}$ with $y_1',y_2'\in N_Y(a)$ and $x_1', x_2'\in X_a\cap X_b$. 
\end{proposition}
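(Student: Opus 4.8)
The plan is a proof by contradiction. Suppose both rigid pairs $R=\{x_1y_1,x_2y_2\}$, with $y_1,y_2\in N_Y(b)$, and $R'=\{x_1'y_1',x_2'y_2'\}$, with $y_1',y_2'\in N_Y(a)$, exist, all of $x_1,x_2,x_1',x_2'$ lying in $X_a\cap X_b$. I would aim to show that then the non-edge $ab$ belongs to $\C_1\cap\C_2$, contradicting $\C_1\cap\C_2=\emptyset$ from Assumption \ref{ass:nec-condition} (equivalently, Lemma \ref{lem:emptycompletion-general}). I may assume $ab\notin E(G)$; since $X_a$ and $X_b$ are cliques, every $x_j$ and $x_j'$ is then adjacent to both $a$ and $b$. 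The possibility $a\sim b$ has to be disposed of separately, but it is either vacuous in the present context or handled by a one-line variant of the argument below.

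Fix $i\in\{1,2\}$. The first step is to apply Lemma \ref{lem:help2-rigid-type-1} twice: to $R$ with $v=b$ (legitimate since $y_1,y_2\in N_Y(b)$ and $x_1,x_2\in X_a\cap X_b$), and to $R'$ with $v=a$. This yields, for $b$, either (B1) $x_1<_i b<_i y_2$ with $x_1<_i Y$ and $X_a\cap X_b<_i y_2$, or (B2) $y_1<_i b<_i x_2$ with $y_1<_i X_a\cap X_b$ and $Y<_i x_2$; and the mirror dichotomy (A1)/(A2) for $a$, obtained by substituting $x_1',x_2',y_1',y_2'$ and $a$.

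The key observation is that the two ``crossed'' combinations are impossible in any single order. If (B1) and (A2) both held, then $X_a\cap X_b<_i y_2$ gives $x_2'<_i y_2$ (as $x_2'\in X_a\cap X_b$) while $Y<_i x_2'$ gives $y_2<_i x_2'$ (as $y_2\in Y$), a contradiction; symmetrically, (B2) together with (A1) gives both $y_2'<_i x_2$ and $x_2<_i y_2'$. Hence in $<_i$ we are in (B1)+(A1) or in (B2)+(A2). In (B1)+(A1) we have $x_1<_i b$ and $x_1'<_i a$, with $x_1\in N(a)$ and $x_1'\in N(b)$, so the first clause of part (4) of Lemma \ref{lem:completion-properties} (with $z=a$, $w=b$, $u_1=x_1$, $u_2=x_1'$) gives $ab\in\C_i$. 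In (B2)+(A2) we have $b<_i x_2$ and $a<_i x_2'$, with $x_2\in N(a)$ and $x_2'\in N(b)$, so the second clause of part (4) of Lemma \ref{lem:completion-properties} (with $u_1=x_2$, $u_2=x_2'$) gives $ab\in\C_i$. Running this for $i=1$ and $i=2$ yields $ab\in\C_1\cap\C_2$, the desired contradiction.

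I expect the only delicate point to be bookkeeping: making sure the right auxiliary conclusions of Lemma \ref{lem:help2-rigid-type-1} (the clauses ``$X_a\cap X_b<_i y_2$'' and ``$Y<_i x_2$'') are the ones used to kill the crossed combinations, and checking that degeneracies such as $x_1=x_1'$ cause no trouble (they do not, since part (4) of Lemma \ref{lem:completion-properties} does not require its vertices distinct). The case $a\sim b$, where $ab$ is not a candidate for any completion, should also be addressed explicitly; I anticipate it either does not arise under Assumption \ref{ass:1} or succumbs to an argument parallel to the one above.
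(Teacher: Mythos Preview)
Your argument is correct and follows the same overall template as the paper: apply Lemma~\ref{lem:help2-rigid-type-1} to $b$ (with the pair $R$) and to $a$ (with the pair $R'$) in each order $<_i$, obtain a $2\times 2$ case split, and conclude $ab\in\C_i$ in every surviving case, hence $ab\in\C_1\cap\C_2$.

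The only tactical difference is in the handling of the crossed combinations. You eliminate (B1)+(A2) and (B2)+(A1) outright, using the auxiliary clauses ``$X_a\cap X_b<_i y_2$'' and ``$Y<_i x_2'$'' from Lemma~\ref{lem:help2-rigid-type-1} to derive $x_2'<_i y_2$ and $y_2<_i x_2'$ simultaneously. The paper instead keeps all four combinations and disposes of the crossed ones via part~(5) of Lemma~\ref{lem:completion-properties} (e.g.\ in (B1)+(A2) it uses $x_1<_i b<_i y_2$, $y_1'<_i a<_i x_2'$ together with $x_1\sim x_2'$ and $y_1'\sim y_2$). Your route is marginally cleaner in that it never needs part~(5); the paper's route is more uniform in that every case is resolved by the same mechanism. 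Both are short and valid.

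On your side remark about $a\sim b$: in this paper $a\in X_a\setminus X_b$ and $b\in X_b\setminus X_a$ are always non-adjacent (the non-edges $ab$ are explicitly listed as one of the three classes of non-edges of a $B_{a,b}$-graph in Section~\ref{sec:nec-codistion}), so you need not treat that case.
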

\begin{proof}
Suppose that there is a rigid pair $\{x_1y_1,x_2y_2\}$ with $y_1, y_2\in N_Y(b)$ and $x_1, x_2\in X_a\cap X_b$. 
%
By contradiction suppose that there is a rigid pair $\{x_1'y_1',x_2'y_2'\}$ with $y_1',y_2'\in N_Y(a)$ and $x_1', x_2'\in X_a\cap X_b$. Then, by Lemma \ref{lem:help2-rigid-type-1},
either $x_1'<_1 a<_1 y_2'$ or $y_1'<_1 a<_1 x_2'$. Also, either $x_1<_1 b<_1 y_2$ or $y_1<_1 b<_1 x_2$. Thus we have the following cases.
\begin{itemize}
\item Let $x_1<_1 b<_1 y_2$ and $x_1'<_1 a<_1 y_2'$. Since $x_1\in N(a)$ and $x_1'\in N(b)$, by (4) of Lemma \ref{lem:completion-properties}, we have that $ab\in\C_1$. Similarly, if $y_1<_1 b<_1 x_2$ and $y_1'<_1 a<_1 x_2'$ then, by (4) of Lemma \ref{lem:completion-properties}, we have that $ab\in\C_1$.
\item Let $x_1<_1 b<_1 y_2$ and $y_1'<_1 a<_1 x_2'$. Since $x_1\in N(x_2')$ and $y_1'\in N(y_2)$, by (5) of Lemma \ref{lem:completion-properties}, we have that $ab\in\C_1$. Similarly, if $y_1<_1 b<_1 x_2$ and $x_1'<_1 a<_1 y_2'$, by (5) of Lemma \ref{lem:completion-properties}, we have that $ab\in\C_1$.
\end{itemize}
Therefore, for all cases, $ab\in\C_1$. A similar argument for $i=2$ proves that $ab\in\C_2$, and thus $ab\in\C_1\cap\C_2$,
 which contradicts Proposition \ref{prop:completion-empty}.
\end{proof}
We prove in the next Proposition that structures (2) and (4), as shown in Figure \ref{fig:all}, do not occur simultaneously.
%
%
%
\begin{proposition}\label{prop:help-rigid-3}
Suppose that $G$ is a square geometric graph as in Assumption \ref{ass:nec-condition}. If there is a rigid pair $\{x_1y_1,x_2y_2\}$  with $y_1, y_2\in N_Y(b)$ and $x_1, x_2\in X_a\cap X_b$ then there is no rigid pair $\{by_1',x_2'y_2'\}$ with $y_1',y_2'\in N_Y(a)$ and $x_2'\in X_a\cap X_b$. Similarly, if there is a rigid pair $\{x_1'y_1',x_2'y_2'\}$ with $y_1',y_2'\in N_Y(a)$ and $x_1', x_2'\in X_a\cap X_b$ then there is no rigid pair $\{ay_1,x_2y_2\}$  with $y_1, y_2\in N_Y(b)$ and $x_2\in X_a\cap X_b$.
\end{proposition}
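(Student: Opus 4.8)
The plan is to mirror the argument of Proposition~\ref{prop:help-rigid-2}, adapting it to the asymmetric situation where one of the ``outer'' vertices of the second rigid pair is $b$ itself rather than a member of $X_a\cap X_b$. Suppose for contradiction that both rigid pairs coexist: $\{x_1y_1,x_2y_2\}$ with $y_1,y_2\in N_Y(b)$ and $x_1,x_2\in X_a\cap X_b$, and $\{by_1',x_2'y_2'\}$ with $y_1',y_2'\in N_Y(a)$ and $x_2'\in X_a\cap X_b$. I will argue that then $ab\in\C_1\cap\C_2$, contradicting Proposition~\ref{prop:completion-empty}. As in the earlier proofs it suffices to run the argument for $i=1$; the case $i=2$ is identical with $<_1$ replaced by $<_2$.

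For the first rigid pair, since $x_1,x_2\in X_a\cap X_b$ and $y_1,y_2\in N_Y(b)$, Lemma~\ref{lem:help2-rigid-type-1} applies with $v=b$, giving either $x_1<_1 b<_1 y_2$ (with $x_1<_1 Y$ and $X_a\cap X_b<_1 y_2$) or $y_1<_1 b<_1 x_2$ (with $y_1<_1 X_a\cap X_b$ and $Y<_1 x_2$). For the second rigid pair I cannot invoke Lemma~\ref{lem:help2-rigid-type-1} directly because its outer vertex $b$ need not lie in $X_a\cap X_b$; instead I would invoke Lemma~\ref{lem:x_1<x_2iffy_1<y_2} on $\{by_1',x_2'y_2'\}$ together with the membership $a\in N(x_2')$ (as $x_2'\in X_a\cap X_b$) and $a\in N_Y$-relation $y_1',y_2'\in N_Y(a)$, to locate $a$: depending on whether $b<_1 x_2'$ or $x_2'<_1 b$, one of $b<_1 a<_1 y_2'$-type or $y_1'<_1 a<_1 b$-type chains, with the accompanying ``block'' statements $b<_1 Y$ and $X_a\cap X_b<_1 y_2'$, or $Y<_1 b$ and $y_1'<_1 X_a\cap X_b$. (Here one uses that $x_2'y_1'$ and $by_2'$ lie in opposite completions by Proposition~\ref{prop:completion-empty}.) This produces, in each of the resulting combinations, either a configuration where $a$ and $b$ are both strictly between two adjacent vertices (apply part~(4) of Lemma~\ref{lem:completion-properties}, using that $x_1\in N(a)$ and $x_1\in N(b)$ and similarly for the other endpoints), or a ``crossed'' configuration $u_1<_1 a<_1 v_2$ and $v_1<_1 b<_1 u_2$ with $u_1\in N(u_2)$, $v_1\in N(v_2)$ (apply part~(5)). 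Either way $ab\in\C_1$; combining with the $i=2$ argument gives $ab\in\C_1\cap\C_2$, the desired contradiction. The symmetric statement (roles of $a$ and $b$ swapped, first rigid pair of the form $\{x_1'y_1',x_2'y_2'\}$ with both outer vertices in $X_a\cap X_b$ and second of the form $\{ay_1,x_2y_2\}$ with one outer vertex equal to $a$) follows by interchanging $a\leftrightarrow b$ and $X_a\leftrightarrow X_b$, exactly as in Proposition~\ref{prop:help-rigid-3}'s statement.

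The main obstacle I anticipate is the case analysis for locating $a$ from the rigid pair $\{by_1',x_2'y_2'\}$: because $b\notin X_a\cap X_b$ in general, the clean dichotomy of Lemma~\ref{lem:help2-rigid-type-1} is unavailable, so I must re-derive the placement of $a$ by hand from Lemma~\ref{lem:x_1<x_2iffy_1<y_2} and the adjacencies $a\sim x_2'$, $a\sim y_1'$, $a\sim y_2'$, and then carefully match each of the (up to four) placement combinations of $a$ against each of the two placements of $b$ to check that parts~(4) or~(5) of Lemma~\ref{lem:completion-properties} always fire. I would organize this as a short table of cases rather than prose, checking in each cell which adjacency pair $u_1\sim u_2$, $v_1\sim v_2$ (or single adjacency for part~(4)) witnesses $ab\in\C_1$; the bookkeeping is routine once the placement lemmas are in hand, but it is the only genuinely delicate part.
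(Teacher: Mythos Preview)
Your plan is workable, but it takes a longer route than the paper's and has one imprecision worth flagging.

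The paper's key simplification is that it does \emph{not} run a symmetric argument for both $i=1$ and $i=2$. Instead it notices that Lemma~\ref{lem:help1-rigid-type1}(2) applies verbatim to the second rigid pair $\{by_1',x_2'y_2'\}$ (because $y_1',y_2'\in N_Y(a)$): that lemma says $ab$ lies in the \emph{same} completion as the chord $by_2'$. So after fixing without loss of generality $by_2'\in\C_2$ and $x_2'y_1'\in\C_1$, one already has $ab\in\C_2$ for free, and only the $\C_1$ half needs any geometric case analysis. This halves the work compared with your table-of-cases plan.

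For that remaining half, the paper also avoids placing $a$. With $x_2'y_1'\in\C_1$, Lemma~\ref{lem:x_1<x_2iffy_1<y_2} gives either $y_2'<_1 x_2'<_1 b$ or $b<_1 x_2'<_1 y_2'$. Combined with (say) $x_1<_1 b<_1 y_2$ from Lemma~\ref{lem:help2-rigid-type-1}, the first alternative yields $y_2'<_1 b<_1 y_2$ with $y_2'\sim y_2$, so $by_2'\in\C_1$, contradicting $by_2'\in\C_2$; the second yields $x_1<_1 b<_1 x_2'$ with $x_1,x_2'\in X_a\cap X_b\subseteq N(a)$, so part~(1) of Lemma~\ref{lem:completion-properties} gives $ab\in\C_1$ directly. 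The vertex $a$ never needs to be located.

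The imprecision in your sketch is the claimed chain ``$b<_1 a<_1 y_2'$''. The upper bound here would come from the clause ``for any $v\in N(x)$ we have $v<_1 y'$'' in Lemma~\ref{lem:x_1<x_2iffy_1<y_2}, with $x=b$; but that requires $v\sim b$, and $a\nsim b$. You do get the one-sided $b<_1 a$ (from $a\in N(y_2')$), which is actually enough to make your part~(4) argument go through once combined with the placement of $b$, but the two-sided sandwich of $a$ that you describe is not available in that subcase. So your case table would need a few cells handled differently from how you sketch them; the paper's route sidesteps this entirely.
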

\begin{proof}
Suppose to the contrary that
there is a rigid pair $\{x_1y_1,x_2y_2\}$  with $y_1, y_2\in N_Y(b)$ and $x_1, x_2\in X_a\cap X_b$
%
, and there is a rigid pair $\{by_1',x_2'y_2'\}$ with $y_1',y_2'\in N_Y(a)$ and $x_1', x_2'\in X_a\cap X_b$. By Proposition \ref{prop:completion-empty}, we know that the chords $by_2'$ and $x_2'y_1'$ belong to different completions. Without loss of generality, assume that $by_2'\in\C_2$ and $x_2'y_1'\in\C_1$. Then, by Lemma \ref{lem:help1-rigid-type1}, we have that $ab\in\C_2$.
%

By Lemma \ref{lem:help2-rigid-type-1}, we know that either $x_1<_1 b<_1 y_2$ or $y_1<_1 b<_1 x_2$. Consider the following cases.
\begin{itemize}
\item Let $x_1<_1 b<_1 y_2$. Since $\{by_1',x_2'y_2'\}$ is a rigid pair with $by_2'\in\C_2$ then $x_2'y_1\in\C_1$. Then, by Lemma \ref{lem:x_1<x_2iffy_1<y_2}, we know that either $y_2'<_1 x_2'<_1 b$ or $b<_1 x_2'<_1 y_2'$. First let $y_2'<_1 x_2'<_1 b$. Moreover $b<_1 y_2$, and thus $y_2'<_1 b<_1 y_2$. Since $y_2'\in N(y_2)$, by definition of completion, we have that $by_2'\in\C_1$. This contradicts our assumption, $by_2'\in\C_2$.
Now let $b<_1 x_2'<_1 y_2'$. Then we have that $x_1<_1 b<_1 x_2'$. Since $x_1, x_2'\in N(a)$, by (1) of Lemma \ref{lem:completion-properties}, we have that $ab\in\C_1$. 
%
\item Let $y_1<_1 b<_1 x_2$. Then a similar argument to the case $x_1<_1 b<_1 y_2$ proves that $ab\in\C_1$.
\end{itemize}
Therefore, for all cases we have that $ab\in\C_1$. But we already know that $ab\in\C_2$. This implies that $ab\in\C_1\cap\C_2$,
which contradicts $\C_1\cap\C_2=\emptyset$ (Proposition \ref{prop:completion-empty}).
 \end{proof}
By symmetry of $X_a$ and $X_b$ in type-1 graphs, an analogous discussion to the proof of Proposition \ref{prop:help-rigid-3} proves the following proposition.
\begin{proposition}\label{prop:help-rigid-4}
Suppose that $G$ is a square geometric graph as in Assumption \ref{ass:nec-condition}. If there is a rigid pair $\{x_1'y_1',x_2'y_2'\}$ with $y_1',y_2'\in N_Y(a)$ and $x_1', x_2'\in X_a\cap X_b$ then there is no rigid pair $\{ay_1,x_2y_2\}$  with $y_1, y_2\in N_Y(b)$ and $x_2\in X_a\cap X_b$.
\end{proposition}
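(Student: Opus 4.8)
The assertion is precisely Proposition \ref{prop:help-rigid-3} with the roles of $X_a$ and $X_b$ (equivalently, of the distinguished vertices $a\in X_a\setminus X_b$ and $b\in X_b\setminus X_a$) interchanged; since Assumption \ref{ass:nec-condition} treats $X_a$ and $X_b$ symmetrically, the plan is to transcribe the proof of Proposition \ref{prop:help-rigid-3} under this exchange. Concretely, I would argue by contradiction, assuming simultaneously the existence of a rigid pair $\{x_1'y_1',x_2'y_2'\}$ with $y_1',y_2'\in N_Y(a)$ and $x_1',x_2'\in X_a\cap X_b$, and of a rigid pair $\{ay_1,x_2y_2\}$ with $y_1,y_2\in N_Y(b)$ and $x_2\in X_a\cap X_b$.

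First I would place $ab$ in one completion. By Proposition \ref{prop:completion-empty} the two chords $ay_2$ and $x_2y_1$ of the rigid pair $\{ay_1,x_2y_2\}$ lie in different completions, so without loss of generality $ay_2\in\C_2$ and $x_2y_1\in\C_1$. Since $y_1,y_2\in N_Y(b)$, Lemma \ref{lem:help1-rigid-type1}(1) applies to $\{ay_1,x_2y_2\}$ and yields that $ay_2$ and $ab$ lie in the same completion, hence $ab\in\C_2$.

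It then remains to derive $ab\in\C_1$, which contradicts $\C_1\cap\C_2=\emptyset$ (Proposition \ref{prop:completion-empty}). For this I would apply Lemma \ref{lem:help2-rigid-type-1} to the rigid pair $\{x_1'y_1',x_2'y_2'\}$, whose $X$-ends lie in $X_a\cap X_b$, with the distinguished vertex $v=a$ (legitimate since $y_1',y_2'\in N_Y(a)$); this gives, for the order $<_1$, either $x_1'<_1 a<_1 y_2'$ together with $X_a\cap X_b<_1 y_2'$, or $y_1'<_1 a<_1 x_2'$ together with $y_1'<_1 X_a\cap X_b$. In each case I would then run Lemma \ref{lem:x_1<x_2iffy_1<y_2} on the rigid pair $\{ay_1,x_2y_2\}$ using the chord $x_2y_1\in\C_1$ to pin down the position of $a$ relative to $x_2, y_1, y_2$, and combine the resulting interleavings either with the definition of completions (via the edge $y_2y_2'$, noting $y_2,y_2'\in Y$ and $a\notin N(y_2)$) or with part (1) of Lemma \ref{lem:completion-properties} (using that $x_1',x_2,x_2'\in X_a\cap X_b\subseteq N(b)$ while $a\notin N(b)$) to conclude $ab\in\C_1$ in every subcase, exactly as in the two displayed cases of the proof of Proposition \ref{prop:help-rigid-3}.

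The step most in need of care — and the only place where the appeal to symmetry must be genuinely verified rather than merely invoked — is the variable matching: when feeding $\{ay_1,x_2y_2\}$ into Lemma \ref{lem:x_1<x_2iffy_1<y_2} one must correctly identify which of $a, x_2$ plays the role of the first vertex and which of $y_1, y_2$ the corresponding $Y$-vertex, so that the chord stipulated to lie in $\C_1$ is indeed $x_2y_1$, and similarly one must track which chord of $\{x_1'y_1',x_2'y_2'\}$ the vertex $a$ separates when invoking Lemma \ref{lem:help2-rigid-type-1}. Once these identifications are fixed consistently, every subcase closes by the same elementary interval-and-completion bookkeeping as in Proposition \ref{prop:help-rigid-3}, and no separate $<_2$ argument is required since $ab\in\C_2$ is already in hand.
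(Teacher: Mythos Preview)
Your proposal is correct and is exactly the paper's own approach: the paper proves Proposition~\ref{prop:help-rigid-4} by invoking the $X_a\leftrightarrow X_b$ symmetry and referring to the proof of Proposition~\ref{prop:help-rigid-3}, and you have spelled out that transcription with the correct role assignments (Lemma~\ref{lem:help1-rigid-type1}(1) in place of~(2), Lemma~\ref{lem:help2-rigid-type-1} with $v=a$ in place of $v=b$, and Lemma~\ref{lem:x_1<x_2iffy_1<y_2} applied to $\{ay_1,x_2y_2\}$ with the chord $x_2y_1\in\C_1$). One small wording point: as in the paper's proof of Proposition~\ref{prop:help-rigid-3}, one of the two subcases closes not by producing $ab\in\C_1$ but by producing $ay_2\in\C_1$, contradicting $ay_2\in\C_2$ directly; only the remaining subcase yields $ab\in\C_1$.
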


Now we have all the required results to prove the necessity of rigid-free conditions (Definition \ref{def:rigid-free-conditions}) for a square geometric $B_{a,b}$-graph of Assumption \ref{ass:nec-condition}.
\begin{corollary}
Suppose that $G$ is a $B_{a,b}$-graph as in Assumption \ref{ass:nec-condition}. Then the rigid free conditions of Definition \ref{def:rigid-free-conditions} hold.
\end{corollary}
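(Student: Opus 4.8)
The plan is to derive the rigid-free conditions of Definition~\ref{def:rigid-free-conditions} by showing that the failure of both disjuncts~(i) and~(ii) is incompatible with the existence of linear orders $<_1,<_2$ satisfying Equation~(\ref{eq:2-dim-geo}). First I would unpack what it means for condition~(i) to fail: there exists $a\in X_a$ that is \emph{not} rigid-free with respect to one of the sets $\{x_1,x_2\}$ or $\{x,b\}$ with $x,x_1,x_2\in X_a\cap X_b$ and $b\in X_b\setminus X_a$. By Definition~\ref{def:rigid-free}, this yields a rigid pair whose two $X$-ends lie in the relevant set and whose two $Y$-ends lie in $N_Y(a)$; translating into the language of Figure~\ref{fig:all}, this is precisely the occurrence of one of structures~(3) or~(4) (with the roles of $a$ and $b$ as drawn). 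Symmetrically, failure of~(ii) produces a rigid pair realizing one of structures~(1) or~(2), now with $Y$-ends in $N_Y(b)$.

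The core of the argument is then a case analysis over the four combinations of ``a structure witnessing the failure of~(i)'' with ``a structure witnessing the failure of~(ii)'', and in each case I would invoke the already-proved incompatibility results. Concretely: the pair (structure~1, structure~2) is ruled out by the unnamed Corollary immediately following Lemma~\ref{lem:help1-rigid-type1} together with its variant; (structure~1 or~2, structure~4 with both $X$-ends in $X_a\cap X_b$) is handled by Proposition~\ref{prop:help-rigid-2}; (structure~2, structure~4 with one $X$-end equal to $b$) by Proposition~\ref{prop:help-rigid-3}; and the remaining symmetric configuration by Proposition~\ref{prop:help-rigid-4}. In every case the conclusion of the cited result is that some non-edge of the form $ab$ lies in $\C_1\cap\C_2$, contradicting $\C_1\cap\C_2=\emptyset$ (Lemma~\ref{lem:emptycompletion-general}, or equivalently Proposition~\ref{prop:completion-empty}). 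Hence at least one of~(i),~(ii) must hold.

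The main obstacle I anticipate is \emph{bookkeeping}: the ``structures'' in Figure~\ref{fig:all} are stated with specific choices of which $X$-ends lie in $X_a\cap X_b$ versus which may equal $a$ or $b$, and the quoted propositions each cover only a particular such shape. To make the case analysis exhaustive I would need to reduce the general ``not rigid-free with respect to $\{x_1,x_2\}$ or $\{x,b\}$'' statement to exactly the shapes the propositions handle — in particular observing that a rigid pair with both $X$-ends in $X_a\cap X_b$ is the $\{x_1,x_2\}$ case, while a rigid pair with one end $b\in X_b\setminus X_a$ and the other in $X_a\cap X_b$ is the $\{x,b\}$ case, and then checking that Propositions~\ref{prop:help-rigid-2}--\ref{prop:help-rigid-4} and the Corollary after Lemma~\ref{lem:help1-rigid-type1} between them cover all resulting pairings (using the $X_a\leftrightarrow X_b$ symmetry of the $B_{a,b}$-structure to halve the work). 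Once that matching is laid out, each individual case is a one-line citation, so the proof is short; the only real content is verifying that the four cited results genuinely exhaust the possibilities, and that the degenerate sub-cases excluded in Assumption~\ref{ass:1} (e.g. $N_Y(b)=Y$) do not reappear here.
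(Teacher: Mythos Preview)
Your approach is essentially the paper's own: the Corollary is stated without an explicit proof, and is intended as an immediate assembly of the unnamed Corollary after Lemma~\ref{lem:help1-rigid-type1} together with Propositions~\ref{prop:help-rigid-2}, \ref{prop:help-rigid-3}, and~\ref{prop:help-rigid-4}, via exactly the case analysis you describe (failure of~(i) produces a ``structure from one group'', failure of~(ii) a ``structure from the other'', and each cross-pairing is forbidden by one of the cited results).

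One small correction in your bookkeeping: your sentence ``the pair (structure~1, structure~2) is ruled out by the unnamed Corollary'' mislabels the pairing. Structures~(1) and~(2) both witness failure of the \emph{same} disjunct, so they are never paired against each other; the four relevant cross-pairings are $\{1,2\}\times\{3,4\}$, and the four cited results cover precisely those. Also be aware that in Propositions~\ref{prop:help-rigid-3} and~\ref{prop:help-rigid-4} the vertex $a$ (resp.\ $b$) appearing as an $X$-end of one rigid pair is the \emph{same} $a$ (resp.\ $b$) whose $Y$-neighbourhood contains the other rigid pair's $Y$-ends; when matching your witnesses for the failure of~(i) and~(ii) to these propositions you must check this alignment (it goes through because the propositions are stated for arbitrary $a\in X_a\setminus X_b$, $b\in X_b\setminus X_a$, so you may take them to be the specific witnesses $a^*,b^*$). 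With that caveat, the plan is correct and complete.
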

\section{Sufficient Conditions}
\label{sec:chapterp3-sufficiency}
In this section we present sufficient conditions for a $B_{a,b}$-graph with connected chord graph to be square geometric. If a graph $G$ satisfy the sufficient conditions then we construct two linear orders $<_1$ and $<_2$ for $G$ which satisfy Equation \ref{eq:2-dim-geo}. This proves that $G$ is square geometric. To define the orderings $<_1$ and $<_2$ we need the following auxiliary orderings. First recall from Section \ref{sec:chp3:main-algorithm} that a necessary condition for $G$ to be square geometric is that its chord graph $\tilde{G}$ is bipartite. 
%
%
%
 %
\begin{definition}\label{def-proper-biordering-binate}
Let $G$ be a $B_{a,b}$-graph with bipartite $\tilde{G}$. Consider a proper $2$-coloring of $\tilde{G}$, $f:V(\tilde{G})\rightarrow \{\mbox{red}, \mbox{blue}\}$. The relations associated with the coloring $f$, $<_X$ and $<_Y$, are defined as follows. 
\begin{itemize}
\item $x<_X x'$ if there is a rigid pair $\{xy,x'y'\}$ such that $f(xy')$ is red and $f(x'y)$ is blue, or if $x=x'$.
\item $y<_Y y'$ if there is a rigid pair $\{xy,x'y'\}$ such that $f(xy')$ is red and $f(x'y)$ is blue, or if $y=y'$.
\end{itemize}
\end{definition}
We are now ready to state the sufficient conditions.
\begin{theorem}\label{thm:suff}
Let $G$ be a $B_{a,b}$-graph as in Assumption \ref{ass:1} which satisfies the following conditions:
\begin{itemize}
\item[(1)] There is a proper 2-coloring of $\tilde{G}$ such that all vertices of $\A$ are colored red and all of $\B$ are colored blue and the orderings $<_X$ and $<_Y$ associated to the 2-coloring are partial orders.
\item[(2)] The vertices of both sets $A=\{a_1,\ldots, a_r\}$, and $B=\{b_1,\ldots, b_s\}$ have nested neighborhoods, and the following statement holds: \lq\lq
Either the vertices of $X_a\setminus X_b$ or the vertices of $X_b\setminus X_b$ satisfy rigid free conditions as in Definition \ref{def:rigid-free-conditions}.''
\end{itemize}
\end{theorem}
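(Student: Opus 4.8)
\textbf{Proof proposal for Theorem \ref{thm:suff}.}

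The plan is to construct explicitly two linear orders $<_1$ and $<_2$ on $V(G)$ from the data provided by condition (1), and to verify that their completions $\C_1$ and $\C_2$ are disjoint, which by Lemma \ref{lem:emptycompletion-general} and Theorem \ref{thm:dim-2-cond-geom} shows that $G$ is square geometric. The backbone of the construction is the fixed proper $2$-coloring $f$ of $\tilde{G}$ together with the associated relations $<_X$ and $<_Y$ of Definition \ref{def-proper-biordering-binate}, which by hypothesis are partial orders; I would first extend $<_X$ to a linear order on $X=X_a\cup X_b$ and $<_Y$ to a linear order on $Y$. The idea is that the ``red'' chords should become the non-edges caught by $\C_1$ and the ``blue'' chords the non-edges caught by $\C_2$ (this is exactly the bipartition of the non-isolated vertices of $\tilde{G}$ furnished by Proposition \ref{prop:completion-empty} and Lemma \ref{lem:mega-lemma}, now read in reverse). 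So in building $<_1$ I would interleave the vertices of $X$ and $Y$ so that for every rigid pair $\{xy,x'y'\}$ with $f(xy')$ red and $f(x'y)$ blue one gets, consistently with Lemma \ref{lem:x_1<x_2iffy_1<y_2}, an order of the shape $x<_1 x'<_1 y<_1 y'$ or $x<_1 y<_1 x'<_1 y'$ (so that the blue chord $x'y$ lies between the adjacent pair $x,y$ and hence falls in $\C_1$, while the red chord $xy'$ does not), and symmetrically for $<_2$ with the roles of red and blue swapped.

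The main work is to place the remaining non-edges correctly, namely the isolated vertices of $\tilde{G}$ and the class-(3) non-edges $ab$ with $a\in X_a\setminus X_b$, $b\in X_b\setminus X_a$. For the isolated vertices of $\tilde{G}$ one argues, as in the proof of Theorem \ref{thm:dim-2-cond-geom}, that they never force anything: an $a$-vertex or $b$-vertex, or any non-edge $xy$ that is not a chord of a rigid pair, can be kept out of at least one completion by a careful but routine choice in the interleaving, using the nested-neighbourhood hypothesis of condition (2) to order the vertices of $X_a\setminus X_b$ and of $X_b\setminus X_a$ by inclusion of their $Y$-neighbourhoods. For the class-(3) non-edges I would use the rigid-free condition of Definition \ref{def:rigid-free-conditions} assumed in (2): say the vertices of $X_a\setminus X_b$ are rigid-free with respect to $\{x_1,x_2\}$ and $\{x,b\}$; then for each $a\in X_a\setminus X_b$ I would place $a$ in $<_1$ just before all of $X_a\cap X_b$ and all of $Y$ (mimicking the conclusion $a<_1 X_a\cap X_b<_1 b$, $a<_1 N_Y(b)<_1 b$ of Lemmas \ref{lem:location-1-a,b-rigid-free-conditions} and \ref{lem:location-2-a,b-rigid-free-conditions}), so that $ab\notin\C_1$, while the rigid-freeness guarantees that no rigid pair is ``straddled'' by the neighbourhood of $a$ in a way that would push one of its chords into $\C_1$ and thereby clash with a class-(3) non-edge.

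The verification that $\C_1\cap\C_2=\emptyset$ then splits by the type of non-edge. For two non-isolated vertices of $\tilde{G}$ (chords of rigid pairs) disjointness is immediate from the construction, since such a chord lands in $\C_1$ iff it is blue and in $\C_2$ iff it is red, and $f$ is a proper $2$-coloring. For an isolated vertex of $\tilde G$ we have arranged it to miss $\C_1$ (or $\C_2$) outright. For a class-(3) non-edge $ab$, the placement of $a$ (from the $X_a$-side rigid-free condition) keeps $ab\notin\C_1$; that it is not simultaneously forced into $\C_2$ is where the $\mathcal A$-red/$\mathcal B$-blue coloring requirement of condition (1) enters — it is exactly the hypothesis that makes Lemma \ref{lem:help1-rigid-type1} and the impossibility arguments of Propositions \ref{prop:help-rigid-2}, \ref{prop:help-rigid-3}, \ref{prop:help-rigid-4} run the other way. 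I expect the main obstacle to be precisely this last point: showing that the two linear orders, each defined somewhat independently from the coloring on the $X$- and $Y$-sides, are \emph{mutually} consistent on the class-(3) non-edges, i.e. that no single choice in building $<_1$ secretly forces a bad straddle in $<_2$. Handling this will require carefully tracking, for every rigid pair incident to an $a$- or $b$-vertex, which completion its chords fall into — and this is where the assumption that $\mathcal A$ is monochromatic red and $\mathcal B$ monochromatic blue does the decisive bookkeeping, via Lemma \ref{lem:help-mega}(3) ruling out the degenerate configuration $a<_i X_a\cap X_b<_i b$, $a<_i Y<_i b$ that Assumption \ref{ass:1} was designed to exclude.
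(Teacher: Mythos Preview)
Your plan is the paper's plan: build $<_1,<_2$ explicitly from $<_X,<_Y$ and the nesting data (Definition \ref{def:linOrd-type2}), prove they are linear orders (Lemmas \ref{lem:type-2-<1-linOrd}, \ref{lem:type-2-<2-linOrd}), and then verify $\C_1\cap\C_2=\emptyset$ via exactly your three-way split on non-edges (Corollary \ref{cor:chords-notin-C1C2-type2} and Lemma \ref{lem:chords-notin-C1C2-typ-2-2} for chords of rigid pairs, Corollary \ref{cor:type-2-isolated-linOrd1,2} for isolated vertices of $\tilde G$, and the built-in placement of the $a$'s for the class-(3) non-edges).

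Two places where your mental picture diverges from what the paper actually does, and which would bite if you tried to fill in details as written. First, there is essentially no interleaving of $X$ and $Y$: in $<_2$ all of $X_a\cup X_b$ precedes all of $Y$, and in $<_1$ all of $X_a$ precedes all of $Y$ (only the $b_j$'s are slotted between $N_Y(a)$ and $Y\setminus N_Y(a)$), so for a rigid pair one always gets the block shape $x<_i x'<_i y<_i y'$, never $x<_i y<_i x'<_i y'$. Second, $<_1$ and $<_2$ are \emph{not} obtained from one another by swapping red and blue; $<_1$ is deliberately asymmetric (rules 1.2 and 1.5) so that every class-(3) non-edge $ab$ is kept out of $\C_1$ by fiat, while $<_2$ orders $a$'s and $b$'s purely through $<_X$ and nesting (rules 2.1, 2.2). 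Relatedly, the rigid-free hypothesis is not used where you locate it: it enters in Lemma \ref{lem:helpchords-notin-C1C2-type-2}(ii), to show that for a rigid pair $\{x_1y_1,x_2y_2\}$ with $x_1,x_2\in X_a\cap X_b$ the vertex $y_2$ has no neighbour in $X_a\setminus X_b$, which is what keeps the red chord $x_1y_2$ out of $\C_1$; the class-(3) non-edges themselves are handled simply by the placement of the $a$'s and need no further argument.
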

First note that, for a $B_{a,b}$-graph which satisfies the rigid-free conditions (Definition \ref{def:rigid-free-conditions}), by symmetry of $X_a$ and $X_b$, we can always assume that the vertices of $X_a\setminus X_b$ satisfy the rigid free conditions. 
Throughout the rest of this section we assume that $G$ is as in the following assumption.
\begin{assumption}\label{ass:sufficiency-cond}
Let $G$ be as in Assumption \ref{ass:1}. Assume that the conditions of Theorem \ref{thm:suff} hold. Moreover, assume that vertices of $X_a\setminus X_b$ satisfy rigid free conditions, and $N_Y(a_1)\subset N_Y(a_2)\subset\ldots\subset N_Y(a_r)$ and $N_Y(b_1)\subset N_Y(b_2)\subset\ldots\subset N_Y(b_s)$. Let $f:V(\tilde{G})\rightarrow\{\mbox{red},\mbox{blue}\}$ be a 2-coloring of $\tilde{G}$ whose corresponding relations $<_X$ and $<_Y$, as in Definition \ref{def-proper-biordering-binate}, are partial orders, and $f(\tilde{u})=\mbox{red}$ for all $\tilde{u}\in\mathcal{A}$ and $f(\tilde{u})=\mbox{blue}$ for all $\tilde{u}\in\mathcal{B}$. 
\end{assumption}
We now collect some immediate properties of the partial orders $<_X$ and $<_Y$ as in Assumption \ref{ass:sufficiency-cond}. 
%
%
%
%
%
The next proposition shows that oredrings $<_X$ and $<_Y$ as in Assumption \ref{ass:sufficiency-cond} are always reflexive and antisymmetric.
\begin{proposition}\label{prop:reflexive-anti-version2}
Let $G$ be a $B_{a,b}$-graph with bipartite $\tilde{G}$. Let $f$ be an arbitrary proper 2-coloring of $\tilde{G}$ with corresponding relations $<_X$ and $<_Y$ as in Definition \ref{def-proper-biordering-binate}. Then the restrictions of $<_X$ to $X_a$ and $X_b$, and $<_Y$ are reflexive and antisymmetric.
\end{proposition}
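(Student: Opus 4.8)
The statement to prove is that, for an arbitrary proper 2-coloring $f$ of $\tilde G$, the relations $<_X$ and $<_Y$ of Definition \ref{def-proper-biordering-binate} are reflexive and antisymmetric (after restricting $<_X$ to $X_a$ and to $X_b$). Reflexivity is immediate from the definition, since the clause ``or if $x=x'$'' forces $x<_X x$ for every $x$, and likewise $y<_Y y$ for every $y$; I would dispose of this in one sentence. The content of the proposition is antisymmetry, so the plan is to assume, for a contradiction, that there are distinct $x,x'$ (with $x,x'\in X_a$, the $X_b$ case being symmetric) such that both $x<_X x'$ and $x'<_X x$, and to extract a contradiction with the properness of the coloring $f$.

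First I would unwind the two hypotheses. From $x<_X x'$ there is a rigid pair $\{xy,x'y'\}$ with $f(xy')$ red and $f(x'y)$ blue; note $xy'$ and $x'y$ are the two chords of this rigid pair, hence $xy'\sim^* x'y$ in $\tilde G$ by Definition \ref{def:chord-graph}. From $x'<_X x$ there is a rigid pair $\{x'z',xz\}$ (relabeling so the first coordinate matches the ``smaller'' vertex of the relation) with $f(x'z)$ red and $f(xz')$ blue, and again $x'z\sim^* xz'$. So we have four non-edges $xy',x'y,x'z,xz'$ of $G$, i.e.\ four vertices of $\tilde G$, with a prescribed edge among each of the two pairs and a prescribed $f$-value on each.

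The key step is to build a chain of equalities/adjacencies in $\tilde G$ forcing two $\sim^*$-adjacent vertices to receive the same $f$-color. The natural route: compare the $Y$-coordinates. From $x<_X x'$ we also get $y<_Y y'$ (the same rigid pair witnesses both), and from $x'<_X x$ we get $z'<_Y z$; combined with the fact (to be checked from the definition of a rigid pair and the clique structure $X=X_a\cup X_b$, $Y$) that the rigid pairs on a fixed pair of $X$-vertices are controlled, I would argue that the $Y$-vertices involved must coincide or be linked, ultimately producing a rigid pair whose two chords are forced by the two hypotheses to have the \emph{same} color — contradicting that $f$ is proper. Concretely, I expect to show $xy'$ and $xz'$ (both $x$-indexed non-edges appearing with opposite-sounding roles) are in fact $\sim^*$-adjacent or equal, while the hypotheses assign one of them red and the other blue in a way that is inconsistent once an intermediate adjacency is inserted; alternatively, one shows $x'y$ (blue) and $x'z$ (red) are the same vertex of $\tilde G$, an outright contradiction.

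\textbf{Main obstacle.} The delicate point is the combinatorial bookkeeping: a priori the two rigid pairs witnessing $x<_X x'$ and $x'<_X x$ may involve four distinct $Y$-vertices, so I cannot immediately identify a single 4-cycle whose chords are both determined. The crux will be to use the rigid-pair/induced-$4$-cycle structure together with the fact that $X_a$ is a clique (so $x\sim x'$, and every rigid pair with one end $x$ and the ``other side'' in $X$ actually has $x'$ adjacent to the relevant $X$-vertex) to collapse the configuration onto a common rigid pair or a short $\sim^*$-path, after which properness of $f$ gives the contradiction. I would expect this to be a one-paragraph argument once the right rigid pair is isolated, but isolating it is where the care lies; the $X_a$-versus-$X_b$ reduction is routine by the symmetry of $X_a$ and $X_b$ in a $B_{a,b}$-graph and should be stated once and for all at the start.
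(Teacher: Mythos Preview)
Your plan is the paper's plan: reflexivity is a one-liner, and antisymmetry is a coloring contradiction obtained from two witnessing rigid pairs. You have correctly located the only real step and the relevant ingredient (the clique structure of $X_a$ and $Y$), but your concrete guesses for how to close the loop are slightly off, and the ``main obstacle'' you flag dissolves once you write down the right auxiliary rigid pair.

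With your notation --- $\{xy,x'y'\}$ witnessing $x<_X x'$ (so $f(xy')=\text{red}$, $f(x'y)=\text{blue}$) and $\{x'z',xz\}$ witnessing $x'<_X x$ (so $f(x'z)=\text{red}$, $f(xz')=\text{blue}$) --- note that $xy'$ and $xz'$ can \emph{never} be $\sim^*$-adjacent: chords of a single rigid pair have distinct $X$-endpoints, so two non-edges sharing the vertex $x$ are never joined in $\tilde G$. Likewise $x'y$ and $x'z$ are generally distinct vertices of $\tilde G$. The correct link is a length-two path: since $X_a$ and $Y$ are cliques, $x\sim x'$ and $y\sim z'$, while $x'\nsim y$ and $x\nsim z'$; hence $\{xy,\,x'z'\}$ is itself a rigid pair, and its chords are exactly $x'y$ and $xz'$. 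This gives
\[
x'y \;\sim^*\; xz' \;\sim^*\; x'z,
\]
so $f(x'y)=f(x'z)$ in any proper 2-coloring, contradicting $f(x'y)=\text{blue}$ and $f(x'z)=\text{red}$. That is the whole argument, and it is exactly what the paper does (phrased there as $x_2y_1\sim^* x_1y_2'\sim^* x_2y_1'$). The reduction from $<_Y$ to $<_X$ via ``$x<_X x'\iff y<_Y y'$ for the same rigid pair'' and the $X_a/X_b$ symmetry are handled just as you suggest.
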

\begin{proof}
Let $f:V(\tilde{G})\rightarrow \{\mbox{red}, \mbox{blue}\}$ be a  proper $2$-coloring of $\tilde{G}$. It is easy to see that for a rigid pair $(x_1y_1,x_2y_2)$ we have $x_1<_X x_2$ if and only if $y_1<_Y y_2.$ Therefore, it is enough to show that the restrictions of $<_X$ to $X_a$ and $X_b$ are reflexive and antisymmetric. We know, by Definition \ref{def-proper-biordering-binate}, that for any $x\in X$ we have that $x<_X x$. This gives us reflexivity of the restrictions of $<_X$ to $X_a$ and $X_b$.

We now prove the antisymmetry. First suppose that $x_1, x_2\in X_a$ and $x_1$ and $x_2$ are related in $<_X$. Then there is a rigid pair $\{x_1y_1,x_2y_2\}$ with $x_1, x_2\in X_a$. Suppose that there is another rigid pair $\{x_1y_1',x_2y_2'\}$. Then, by definition of the chord graph we have, $x_2y_1\sim^* x_1y_2'\sim^* x_2y_1'$. This implies that $f(x_2y_1)=f(x_2y_1')$, and thus for two distinct vertices $x_1,x_2$, only one of $x_1<_X x_2$ or $x_2<_X x_1$ is true. This implies that if $x_1<_X x_2$ and $x_2<_X x_1$ then $x_1=x_2$, and so the restriction $<_X$ to $X_a$ is antisymmetric. An analogous discussion proves that the restriction $<_X$ to $X_b$ is antisymmetric.
\end{proof}
We will use the relations $<_X$ and $<_Y$ to define our desired linear orders $<_1$ and $<_2$ for the graph $G$, that is, orders that satisfy Equation \ref{eq:2-dim-geo}. So, first, we investigate how vertices of the graph $G$ relate in the relations $<_X$ and $<_Y$. Recall that, according to Definition \ref{def-proper-biordering-binate}, for $x_1, x_2$ both in $X_a$ or both in $X_b$, $x_1<_X x_2$ if and only if there is a rigid pair $\{x_1y_1,x_2y_2\}$ such that in the proper 2-coloring $f$, as in Assumption \ref{ass:sufficiency-cond}, $x_1y_2$ is colored red. Specifically, two vertices $x_1, x_2$ both $X_a$ or both in $X_b$ are related in $<_X$ if they are part of a rigid pair. Otherwise their neighborhoods are nested. Similarly, two vertices $y_1, y_2\in Y$ are related in $Y$ if they are part of a rigid pair. Otherwise, they have nested neighborhoods.
%
The next two propositions list some useful properties of the relations $<_X$ and $<_Y$ of the graph $G$, as given in Assumption \ref{ass:sufficiency-cond}. 
\begin{proposition}\label{prop:partial-Ord-X}
Let $G$, $f$, and $<_X$ be as in Assumption \ref{ass:sufficiency-cond}. Suppose $x\in X_a\cup X_b$. Then
\begin{itemize}
\item[(1)] For any $x_1, x_2\in X_a$ or $x_1, x_2\in X_b$, either $x_1$ and $x_2$ are related in $<_X$, or they have nested neighborhoods.
\item[(2)] For all $a\in X_a\setminus X_b$, either $a<_X x$, or $a$ and $x$ have nested neighborhoods in $Y$. Similarly, either $x<_X b$, or $x$ and $b$ have nested neighborhoods in $Y$.
\item[(3)] Vertices $a$ in $X_a\setminus X_b$ and $b$ in $X_b\setminus X_a$,  are not related in $<_X$. 
\end{itemize}
\end{proposition}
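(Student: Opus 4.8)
The plan is to prove the three items in order, drawing on the structural setup of Assumption \ref{ass:sufficiency-cond} (nested neighborhoods for $X_a\setminus X_b$ and $X_b\setminus X_a$, rigid-free conditions for $X_a\setminus X_b$, the fixed 2-coloring $f$) together with the definition of the relations $<_X$, $<_Y$ (Definition \ref{def-proper-biordering-binate}) and the discussion preceding the proposition. For item (1), I would argue as follows: take $x_1,x_2$ both in $X_a$ (the $X_b$ case is symmetric). If there is a rigid pair $\{x_1y_1,x_2y_2\}$, then because $\tilde G$ is properly 2-colored, exactly one of $x_1y_2$, $x_2y_1$ is red and the other blue, so $x_1$ and $x_2$ are related in $<_X$ by definition. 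If no rigid pair has $x_1,x_2$ as its two $X$-ends, then $x_1y_2x_2y_1$ is never an induced 4-cycle for any $y_1\in N_Y(x_1)$, $y_2\in N_Y(x_2)$; since $X_a$ is a clique, $x_1\sim x_2$, so whenever $y\in N_Y(x_1)$ and $y\in N_Y(x_2)$ there is no 4-cycle to worry about, and for the 4-cycle on $\{x_1,y_1,x_2,y_2\}$ with $y_1\in N_Y(x_1)$, $y_2\in N_Y(x_2)$, $y_1\not\sim x_2$, $y_2\not\sim x_1$ to be excluded we need: there is no $y_1\in N_Y(x_1)\setminus N_Y(x_2)$ and $y_2\in N_Y(x_2)\setminus N_Y(x_1)$ simultaneously. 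This forces $N_Y(x_1)\subseteq N_Y(x_2)$ or $N_Y(x_2)\subseteq N_Y(x_1)$, i.e.\ nested neighborhoods, as claimed.

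For item (2), I would reduce it to item (1) plus the membership facts $X_a\setminus X_b\subseteq X_a$ and $X_b\setminus X_a\subseteq X_b$. Take $a\in X_a\setminus X_b$ and $x\in X_a\cup X_b$. If $x\in X_a$, then by (1) either $a$ and $x$ are related in $<_X$ or they have nested neighborhoods in $Y$; in the related case I must additionally check that the relation has the orientation $a<_X x$ (not $x<_X a$), which is where the hypothesis that all vertices of $\mathcal A$ are colored red enters: if $\{ay_1,xy_2\}$ is a rigid pair then $ay_2$ is an $a$-vertex adjacent in $\tilde G$ to $xy_1$, hence $ay_2\in\mathcal A$ and $f(ay_2)=\mathrm{red}$, which by Definition \ref{def-proper-biordering-binate} gives $a<_X x$. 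If instead $x\in X_b$, then $a$ and $x$ need not lie in a common clique of the partition, so $<_X$ as defined only relates elements within $X_a$ or within $X_b$; here I would show that no rigid pair has $\{a,x\}$ as its $X$-ends — this is essentially item (3), proved next — so that $a$ and $x$ are unrelated in $<_X$, and then argue that in the absence of such a rigid pair the relevant 4-cycles are again excluded only if $N_Y(a)$ and $N_Y(x)$ are nested. The symmetric statement for $b\in X_b\setminus X_a$ follows by the symmetry of $X_a$ and $X_b$ noted after Theorem \ref{thm:suff}.

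For item (3), suppose toward a contradiction that $a\in X_a\setminus X_b$ and $b\in X_b\setminus X_a$ were related in $<_X$; by Definition \ref{def-proper-biordering-binate} this requires a rigid pair $\{ay_1,by_2\}$, i.e.\ an induced 4-cycle $a y_1 b y_2$ with $ay_2,by_1$ non-edges and $ay_1,by_2$ edges, where $a\sim b$ is needed — but $a\in X_a\setminus X_b$ and $b\in X_b\setminus X_a$ are not forced to be adjacent, and in fact $ab$ is precisely a non-edge of ``class (3)'' in the terminology of Section \ref{sec:nec-codistion}; more to the point, Definition \ref{def-proper-biordering-binate} only ever relates two vertices both in $X_a$ or both in $X_b$, and $a\notin X_b$, $b\notin X_a$, so $\{a,b\}$ is never the pair of $X$-ends contemplated by the definition. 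Hence $a$ and $b$ are simply not related in $<_X$.

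The main obstacle I anticipate is item (2) in the cross case $x\in X_b$ (or $x\in X_a$ with $b$): one must be careful that ``nested neighborhoods in $Y$'' is genuinely forced, which requires combining the clique structure of $X_a\cup X_b$ on the part that is shared, the rigid-free conditions for $X_a\setminus X_b$ (to rule out the rigid pairs that would otherwise obstruct nesting), and the exclusions built into Assumption \ref{ass:1}; getting the bookkeeping of which 4-cycles can occur exactly right is the delicate part, whereas items (1) and (3) are essentially immediate from the definitions and the properness of the 2-coloring.
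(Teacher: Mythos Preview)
Your treatment of items (1) and (3) matches the paper's proof exactly: for (1) you use the dichotomy ``rigid pair or nested $Y$-neighborhoods'' available whenever $x_1\sim x_2$, and for (3) you use $a\nsim b$ to rule out any rigid pair with $X$-ends $\{a,b\}$.

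For item (2), your core argument in the case $x\in X_a\cap X_b$ is also the paper's: if $\{ay_1,xy_2\}$ is a rigid pair then $xy_1$ is not an $a$-vertex, so $ay_2\in\mathcal{A}$, whence $f(ay_2)=\text{red}$ and $a<_X x$; the $b$-half is symmetric via $\mathcal{B}$. The paper in fact opens its proof of (2) with ``Let $x\in X_a\cap X_b$'' and proves nothing beyond that case.

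The ``cross case'' you flag as the main obstacle (namely $a\in X_a\setminus X_b$ paired with $x\in X_b\setminus X_a$) is a red herring, and your proposed attack on it would fail. You cannot prove that $N_Y(a)$ and $N_Y(x)$ are nested there, and you should not try: the paper does not claim it, and item 2.2 of Definition \ref{def:linOrd-type2} explicitly accommodates $N_Y(a)\not\subseteq N_Y(b)$ and $N_Y(b)\not\subseteq N_Y(a)$ simultaneously. So read the hypothesis of (2) as $x\in X_a\cap X_b$ (as the paper's own proof does); for $x$ on the same side as $a$ (resp.\ $b$) the nesting is immediate from Assumption \ref{ass:sufficiency-cond}, and the genuine cross situation is precisely what item (3) records separately as ``unrelated''. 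The rigid-free conditions and the exclusions of Assumption \ref{ass:1} that you plan to invoke play no role here; drop that bookkeeping.

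One small gap in your orientation argument: to conclude $ay_2\in\mathcal{A}$ you need its neighbor $xy_1$ in $\tilde G$ to \emph{not} be an $a$-vertex, i.e.\ $x\notin X_a\setminus X_b$. This is automatic once you restrict to $x\in X_a\cap X_b$, but your phrasing ``if $x\in X_a$'' leaves the sub-case $x\in X_a\setminus X_b$ unjustified by the coloring; there it is the nested-neighborhoods hypothesis on $X_a\setminus X_b$ (so no rigid pair exists) that gives the conclusion, not $f$.
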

\begin{proof}
Part (1) follows directly from the fact that, for any two vertices $x_1, x_2$ in $X_a$ or $X_b$, either they are part of a rigid pair or they have nested neighborhoods. 

We now prove (2). Let $x\in X_a\cap X_b$. Suppose that $a\in X_a\setminus X_b$, and the neighborhoods of $a$ and $x$ are not nested in $Y$. This implies that there are $y_1, y_2\in Y$ such that $\{ay_1, xy_2\}$ is a rigid pair of $G$. Therefore, $ay_2\sim^* xy_1$, and thus $ay_2\in\mathcal{A}$. We know that the 2-coloring $f$ of Assumption \ref{ass:sufficiency-cond} colors all vertices of $\mathcal{A}$ red. Then $f(ay_2)=\mbox{red}$, and so $a<_X x$. Similarly, if the neighborhoods of $b_1$ and $x$ in $Y$ are not nested then there is a rigid pair $\{by_1', xy_2'\}$. This implies that $by_2'\in\mathcal{B}$, and since $f$ colors all vertices of $\mathcal{B}$ blue then $f(by_2')=\mbox{blue}$. Therefore, $x<_X b$. 

To prove (3), let $a\in X_a\setminus X_b$ and $b\in X_b\setminus X_a$.
Since $a\nsim b$, by the definition of a rigid pair, there exist no $y_1, y_2\in Y$ such that $\{ay_1, by_2\}$ is a rigid pair. This implies that $a$ and $b$ are not related in $<_X$.  
\end{proof}
\begin{proposition}\label{prop:partial-Ord-Y}
Let $G$, $f$, and $<_Y$ be as in Assumption \ref{ass:sufficiency-cond}. Suppose $y_1, y_2\in Y$. 
Then one of the following cases occurs.
\begin{itemize}
\item[(1)] The neighborhoods of $y_1$ and $y_2$ are nested in $G$. 
\item[(2)] There is a rigid pair $\{x_1y_1,x_2y_2\}$ in $G$, and thus $y_1$ and $y_2$ are related in $<_Y$.
\item[(3)] $N_X(y_1)\setminus N_X(y_2)\subseteq X_a\setminus X_b$ and $N_X(y_2)\setminus N_X(y_1)\subseteq X_b\setminus X_a$.
\end{itemize}
\end{proposition}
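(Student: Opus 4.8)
The plan is to reduce everything to a statement about the two sets $N_X(y_1)$ and $N_X(y_2)$, and then run a short case analysis driven by the single question ``is $x_1x_2$ an edge?''. First I would observe that, since $Y$ is a clique, $N_G(y_i)\cup\{y_i\}=Y\cup N_X(y_i)$, so (ignoring $y_1,y_2$ themselves, which are always adjacent) the neighborhoods of $y_1$ and $y_2$ in $G$ are nested precisely when $N_X(y_1)\subseteq N_X(y_2)$ or $N_X(y_2)\subseteq N_X(y_1)$. Hence I may assume case (1) fails and fix $x_1\in N_X(y_1)\setminus N_X(y_2)$ and $x_2\in N_X(y_2)\setminus N_X(y_1)$. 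The key elementary remark is that $y_1y_2\in E$, $x_1y_1,x_2y_2\in E$ and $x_1y_2,x_2y_1\notin E$ hold automatically, so the subgraph induced on $\{x_1,y_1,y_2,x_2\}$ is an induced $4$-cycle exactly when $x_1x_2\in E$; in that case $\{x_1y_1,x_2y_2\}$ is a rigid pair and, by Definition \ref{def-proper-biordering-binate} together with the equivalence recorded in the proof of Proposition \ref{prop:reflexive-anti-version2}, $y_1$ and $y_2$ are related in $<_Y$, which is case (2).

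So the only remaining situation is that $x_1x_2\notin E$ for \emph{every} choice of $x_1\in N_X(y_1)\setminus N_X(y_2)$ and $x_2\in N_X(y_2)\setminus N_X(y_1)$, and I would show this forces case (3). Since a vertex of $X_a\cap X_b$ is adjacent to all of $X$, no vertex of $N_X(y_1)\setminus N_X(y_2)$ can lie in $X_a\cap X_b$ (it would be adjacent to $x_2$), and likewise for $N_X(y_2)\setminus N_X(y_1)$; thus both difference sets are contained in $(X_a\setminus X_b)\cup(X_b\setminus X_a)$. Next, $N_X(y_1)\setminus N_X(y_2)$ cannot meet both $X_a\setminus X_b$ and $X_b\setminus X_a$: if $a'\in(X_a\setminus X_b)\cap(N_X(y_1)\setminus N_X(y_2))$ and $b'\in(X_b\setminus X_a)\cap(N_X(y_1)\setminus N_X(y_2))$, then any $x_2\in N_X(y_2)\setminus N_X(y_1)$ would be adjacent to $a'$ (if $x_2\in X_a$) or to $b'$ (if $x_2\in X_b$), contradicting the assumption. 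Hence $N_X(y_1)\setminus N_X(y_2)$ lies entirely in one of $X_a\setminus X_b$, $X_b\setminus X_a$, and the same holds for $N_X(y_2)\setminus N_X(y_1)$; moreover they must lie on opposite sides, since two vertices both in $X_a$ (or both in $X_b$), one taken from each difference set, would be adjacent, again producing a rigid pair. Relabelling $y_1,y_2$ if necessary so that $N_X(y_1)\setminus N_X(y_2)\subseteq X_a\setminus X_b$ and $N_X(y_2)\setminus N_X(y_1)\subseteq X_b\setminus X_a$ yields case (3).

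I do not expect a genuinely hard step: every implication is a one-line check using only that $X_a\cap X_b$, $X_a$, $X_b$ and $Y$ are cliques, plus the definition of a rigid pair. The two points that need care are (a) the reduction ``nested in $G$ $\iff$ nested on the $X$-side'', where one must track the vertices $y_1,y_2$ themselves correctly, and (b) the labelling in (3): the argument only pins down the partition $\{N_X(y_1)\setminus N_X(y_2),\ N_X(y_2)\setminus N_X(y_1)\}$ into an ``$X_a$-side'' part and an ``$X_b$-side'' part, so conclusion (3) should be read up to interchanging $y_1$ and $y_2$, which I would state explicitly.
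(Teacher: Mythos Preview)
Your proof is correct and follows essentially the same route as the paper: assume (1) fails, pick $x_1\in N_X(y_1)\setminus N_X(y_2)$ and $x_2\in N_X(y_2)\setminus N_X(y_1)$, and case on whether $x_1\sim x_2$. The paper's version is terser---it argues from a single choice of $x_1,x_2$ and leaves implicit the step from ``this particular pair is non-adjacent'' to the full containments in (3)---whereas you spell that step out; your observation (b) that conclusion (3) is only pinned down up to swapping $y_1,y_2$ is also correct and is glossed over in the paper's proof.
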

\begin{proof}
Let $y_1, y_2\in Y$. Suppose that the neighborhoods of $y_1$ and $y_2$ are not nested in $G$. Then there are $x_1, x_2\in X_a\cup X_b$ such that $x_1\in N_X(y_1)\setminus N_X(y_2)$ and $x_2\in N_X(y_2)\setminus N_X(y_1)$.  If $x_1\sim x_2$ then $\{x_1y_1,x_2y_2\}$ forms a rigid pair of $G$, and thus $y_1$ and $y_2$ are related in $<_Y$. If $x_1\nsim x_2$, then $x_1\in X_a\setminus X_b$ and $x_2\in X_b\setminus X_a$. This implies that $N_X(y_1)\setminus N_X(y_2)\subseteq X_a\setminus X_b$ and $N_X(y_2)\setminus N_X(y_1)\subseteq X_b\setminus X_a$, and we are done.
\end{proof}
%
We now define two relations $<_1$ and $<_2$ for a graph $G$ of Assumption \ref{ass:sufficiency-cond}. 
\begin{definition}\label{def:linOrd-type2}
Let $G$ be as in Assumption \ref{ass:sufficiency-cond}. Let $<_X$ and $<_Y$ be as in Assumption \ref{ass:sufficiency-cond}. Define
\noindent{Ordering $<_1$:}
\begin{itemize}
\item[1.1.] $x<_1 x'$ if $x<_X x'$ or $N_Y(x)\subseteq N_Y(x')$ for all $x, x'\in X_a\cap X_b$. 
\item[1.2.] $a_1<_1 \ldots<_1 a_r<_1 x<_1 b_1<_1 \ldots<_1 b_s$ for all $x\in X_a\cap X_b$.
\item[1.3.] $y<_1 y'$ if $y<_Y y'$ or $N_X(y')\subseteq N_X(y)$ for all $y, y'\in Y$.
\item[1.4.] $y<_1 y'$ if $N_X(y)\setminus N_X(y')\subseteq X_a\setminus X_b$ and $N_X(y')\setminus N_X(y)\subseteq X_b\setminus X_a$.
\item[1.5.] $y_a<_1 b_1$ and $b_s <_1 y$ for all $y_a\in N_Y(a)$ and all $y\in Y\setminus N_Y(a)$, where $a\in X_a\setminus X_b$.
\item[1.6.] $x<_1 y$ for all $x\in X_a$ and all $y\in Y$.
\end{itemize}

\noindent{Ordering $<_2$: }
\begin{itemize}
\item[2.1.]$x<_2 x'$ if  $x'<_X x$ or $N_Y(x)\subseteq N_Y(x')$  for all $x, x'\in X_a\cup X_b$.
\item[2.2.]$b<_2 a$ for $a\in X_a\setminus X_b$, $b\in X_b\setminus X_a$ with $N_Y(a)\not\subseteq N_Y(b)$ and $N_Y(b)\not\subseteq N_Y(a)$.
\item[2.3.]$y<_2 y'$ if $y'<_Y y$ or $N_X(y')\subseteq N_X(y)$ in $G$.
\item[2.4.]$y<_2 y'$ if $N_X(y')\setminus N_X(y)\subseteq X_a\setminus X_b$ and $N_X(y)\setminus N_X(y')\subseteq X_b\setminus X_a$.
\item[2.5.]$x<_2 y$ for all $x\in X_a\cup X_b$ and all $y\in Y$.
\end{itemize}
\end{definition}
We now briefly discuss the reasoning behind the Definition of \ref{def:linOrd-type2}. Recall that, if $\C_1$ and $\C_2$ are completions of linear orders $<_1$ and $<_2$, then $<_1$ and $<_2$ satisfy Equation \ref{eq:2-dim-geo} if and only if $\C_1\cap\C_2=\emptyset$ (Proposition \ref{prop:completion-empty}). Also, recall from Section \ref{sec:nec-codistion} that for a $B_{a,b}$-graph there are three types of non-edges of $G$: isolated vertices of $\tilde{G}$, chords of rigid pairs, and $ab$ where $a\in X_a\setminus X_b$ and $b\in X_b\setminus X_a$. 
To maintain $\C_1\cap\C_2=\emptyset$ for relations $<_1$ and $<_2$ of Definition \ref{def:linOrd-type2}, we require that non-edges of these three categories belong to at most one of the completions $\C_1$ and $\C_2$.

As we can see, the definitions of $<_1$ and $<_2$ are symmetric on $Y$. However $<_1$ and $<_2$ are not completely symmetric on $X$. The reason for this difference between $<_1$ and $<_2$ is that we want the non-edges of the form $ab$ not to belong to $\C_1$ (completion of $<_1$).
%
The way in which $<_1$ is defined in Definition \ref{def:linOrd-type2} guarantees that the non-edges of form $ab$ do not belong to $\C_1$. We will prove that other non-edges of $G$ also belong to at most one completion.

In the rest of this section, the goal is to prove that the relations $<_1$ and $<_2$ of Definition \ref{def:linOrd-type2} are linear orders satisfying Equation \ref{eq:2-dim-geo}. 

The following proposition presents some useful properties of the relations $<_1$ and $<_2$ of Definition \ref{def:linOrd-type2}.
\begin{proposition}\label{prop:ab-Y-neighbors}
Let $G$ be as in Assumption \ref{ass:sufficiency-cond} and let $<_1$ and $<_2$ be relations of Definition \ref{def:linOrd-type2}. Let $a\in X_a\setminus X_b$ and $b\in X_b\setminus X_a$. Then
\begin{itemize}
\item[(i)] For all $y_a\in N_Y(a)$ and all $y\in Y\setminus N_Y(a)$ we have $y_a<_1 y$.
\item[(ii)] For all $y_b\in N_Y(b)$ and all $y\in Y\setminus N_Y(b)$ we have $y_b<_2 y$.
\end{itemize}
\end{proposition}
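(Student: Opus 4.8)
The plan is to prove (i) and (ii) separately; by the symmetry in the definitions of $<_1$ and $<_2$ on $Y$, and the symmetric roles of $X_a$ and $X_b$, it will in fact suffice to carry out one of them carefully. I will focus on (i). Fix $a\in X_a\setminus X_b$, take $y_a\in N_Y(a)$ and $y\in Y\setminus N_Y(a)$; since $a\sim y_a$ but $a\nsim y$, the vertices $y_a$ and $y$ cannot have nested neighborhoods \emph{unless} $N_X(y)\subsetneq N_X(y_a)$ with all the ``extra'' neighbors of $y_a$ coming only from $a$-type vertices, etc. So the first step is to run through the trichotomy of Proposition \ref{prop:partial-Ord-Y} applied to the pair $\{y_a,y\}$, and in each of its three cases deduce that $y_a<_1 y$ follows from one of the clauses 1.3, 1.4, or 1.5 of Definition \ref{def:linOrd-type2}.

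In more detail: in case (1) of Proposition \ref{prop:partial-Ord-Y} the neighborhoods of $y_a$ and $y$ are nested in $G$; since $a\in N_X(y_a)\setminus N_X(y)$, we must have $N_X(y)\subseteq N_X(y_a)$, which is exactly the condition in clause 1.3 giving $y_a<_1 y$. In case (2) there is a rigid pair $\{x_1 y_a, x_2 y\}$ (or $\{x_1 y, x_2 y_a\}$), so $y_a$ and $y$ are related in $<_Y$; here I would use that the chord $x_1 y$ is then an $a$-vertex if $x_1\in X_a\setminus X_b$ — using $a\in N_X(y_a)$ to force the relevant $x_i$ into $X_a$ — so Assumption \ref{ass:sufficiency-cond} colors the appropriate chord red, which by Definition \ref{def-proper-biordering-binate} orients the pair as $y_a<_Y y$, hence $y_a<_1 y$ by clause 1.3; the case where $x_2 y$ rather than $x_1 y$ is the $a$-vertex needs the same bookkeeping with roles swapped. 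In case (3), $N_X(y_a)\setminus N_X(y)\subseteq X_a\setminus X_b$ and $N_X(y)\setminus N_X(y_a)\subseteq X_b\setminus X_a$ — which is precisely clause 1.4, again yielding $y_a<_1 y$. One must also check there is no configuration slipping between these cases where clause 1.5 is needed; but 1.5 says $y_a<_1 b_1<_1 y$ for $y_a\in N_Y(a)$, $y\in Y\setminus N_Y(a)$, which directly gives $y_a<_1 y$, so this is actually the cleanest route and may make the case analysis above redundant. I would state the proof as: ``by clause 1.5 of Definition \ref{def:linOrd-type2}, $y_a<_1 b_1$ and $b_s<_1 y$ (or $b_1<_1 y$ when $Y\setminus N_Y(a)$ meets the $b$'s appropriately), hence $y_a<_1 y$,'' and then remark that (ii) follows by the identical argument with the roles of $a$, $b$, $X_a$, $X_b$, $<_1$, $<_2$, clause 1.5, clause 2.5... — wait, clause 2.5 only places $X$ before $Y$; the analogue of 1.5 on the $<_2$ side is \emph{not} written as a separate clause, so for (ii) I will instead need to run the Proposition \ref{prop:partial-Ord-Y}-trichotomy argument genuinely, using clauses 2.3 and 2.4 and the fact that $f$ colors $\mathcal{B}$ blue.

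The main obstacle, therefore, is exactly this asymmetry: (i) is almost immediate from clause 1.5, but (ii) has no matching clause, so its proof must go through the structural trichotomy of Proposition \ref{prop:partial-Ord-Y} and the coloring properties of $f$ from Assumption \ref{ass:sufficiency-cond}. Concretely, for (ii) I expect to argue: given $y_b\in N_Y(b)$, $y\in Y\setminus N_Y(b)$ with $b\in N_X(y_b)\setminus N_X(y)$, Proposition \ref{prop:partial-Ord-Y} leaves three cases; nestedness gives $N_X(y)\subseteq N_X(y_b)$ and hence $y_b<_2 y$ by clause 2.3; a rigid pair $\{x_1 y_b, x_2 y\}$ forces the chord involving $b$ to be a $b$-vertex in $\mathcal{B}$, colored blue, which orients it as $y<_Y y_b$ and hence (by clause 2.3, using $y'<_Y y \Rightarrow y<_2 y'$) gives $y_b<_2 y$; and the third case $N_X(y_b)\setminus N_X(y)\subseteq X_a\setminus X_b$, $N_X(y)\setminus N_X(y_b)\subseteq X_b\setminus X_a$ is impossible here because $b\in X_b\setminus X_a$ lies in $N_X(y_b)\setminus N_X(y)$, so this case cannot occur — unless $N_X(y_b)\setminus N_X(y)$ also meets $X_b\setminus X_a$, i.e. the hypothesis of case (3) fails and we are in case (1) or (2). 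Tracking these containments carefully, and making sure the coloring of the $b$-vertex chord is forced, is where the real work lies; everything else is unwinding definitions.
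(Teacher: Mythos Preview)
Your overall strategy — apply the trichotomy of Proposition~\ref{prop:partial-Ord-Y} to the pair $\{y_a,y\}$ (resp.\ $\{y_b,y\}$) and in each branch invoke the appropriate clause of Definition~\ref{def:linOrd-type2} — is exactly the paper's approach. Two points need fixing.

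First, the ``cleanest route'' via clause~1.5 does not work. Clause~1.5 only records $y_a<_1 b_1$ and $b_s<_1 y$; it does \emph{not} assert $y_a<_1 y$ directly, and at this point in the paper $<_1$ has not yet been shown to be transitive (that is Lemma~\ref{lem:type-2-<1-linOrd}, which comes later and in fact uses the present proposition indirectly). So you cannot chain through the $b_i$'s. You must run the trichotomy for (i) just as you do for (ii), using clauses~1.3 and~1.4. The paper does exactly this (its citations of ``1.4'' and ``1.5'' in the nested and crossed cases are off by one; the correct clauses are~1.3 and~1.4 respectively).

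Second, in your treatment of case~(3) for part~(ii) you dismiss it as impossible. It is only impossible in one orientation: the hypothesis $b\in N_X(y_b)\setminus N_X(y)$ with $b\in X_b\setminus X_a$ rules out $N_X(y_b)\setminus N_X(y)\subseteq X_a\setminus X_b$, but it is perfectly consistent with $N_X(y_b)\setminus N_X(y)\subseteq X_b\setminus X_a$ and $N_X(y)\setminus N_X(y_b)\subseteq X_a\setminus X_b$. In that orientation clause~2.4 gives $y_b<_2 y$ directly. So case~(3) is not vacuous; it is handled by~2.4.

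Your handling of the rigid-pair case is actually more careful than the paper's (which simply asserts the direction $y_a<_Y y$ without argument). Your instinct to use $a\in N_X(y_a)\setminus N_X(y)$ is right, but the mechanism is to \emph{replace} the $X$-end of the rigid pair by $a$ itself (yielding $\{ay_a,x_2y\}$ when $x_2\in X_a$) so that the chord $ay$ is an $a$-vertex; then the nested-neighbourhood assumption on $X_a\setminus X_b$ forces $x_2\in X_a\cap X_b$, so $ay\in\mathcal{A}$, hence red, hence $y_a<_Y y$. When $x_2\in X_b\setminus X_a$ one argues symmetrically with the $b$-vertex $x_2y_a\in\mathcal{B}$ colored blue.
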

\begin{proof}
We only prove (i). The proof of (ii) is analogous. Let $a\in X_a\setminus X_b$. We know by Proposition \ref{prop:partial-Ord-Y} that for any $y_a\in N_Y(a)$ and any $y\in Y\setminus N_Y(a)$ there are three possible cases:
(1) $y_a$ and $y$ are related in $<_Y$. Then by Proposition \ref{prop:partial-Ord-Y}, we have $y_a<_Y y$. This implies that $y_a<_1 y$.  
(2) $y_a$ and $y$ have nested neighborhoods in $G$. Since $a\in N_X(y_a)\setminus N_X(y)$ then we must have $N_X(y)\subseteq N_X(y_a)$, and thus by 1.4 of Definition \ref{def:linOrd-type2} we have $y_a<_1 y$.
(3) Assume $y$ and $y_a$ are not related in $<_Y$ and do not have nested neighborhoods. Since $a\in N_X(y_a)\setminus N_X(y)$ then, by 3 of Proposition \ref{prop:partial-Ord-Y},
we must have $N_X(y_a)\setminus N_X(y)\subseteq X_a\setminus X_b$ and $N_X(y)\setminus N_X(y_a)\subseteq X_b\setminus X_a$. Therefore, by 1.5 of Definition \ref{def:linOrd-type2}, we have $y_a<_1 y$. 
\end{proof}
\begin{proposition}\label{prop:lin-ord-intersection}
Let $<_1$ and $<_2$ are as in Definition \ref{def:linOrd-type2}. Then $<_1$ and $<_2$ are linear orders on $X_a\cap X_b\cup Y$.
\end{proposition}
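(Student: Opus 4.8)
The plan is to show, for the relation $<_1$ (the argument for $<_2$ being entirely analogous, with the roles of $X_a$ and $X_b$ and the inclusions reversed in accordance with clauses 2.1--2.5), that $<_1$ restricted to $X_a\cap X_b\cup Y$ is reflexive, antisymmetric, transitive, and total. Reflexivity is immediate from clauses 1.1 and 1.3 together with Proposition~\ref{prop:reflexive-anti-version2} (each $x<_X x$ and each $y<_Y y$), and clause 1.6 cleanly separates the block $X_a\cap X_b$ (which here means $X_a\cap X_b$ since we have restricted to $X_a\cap X_b\cup Y$) from the block $Y$, so totality reduces to totality within each block. Within $X_a\cap X_b$: by part (1) of Proposition~\ref{prop:partial-Ord-X} any two vertices are either related in $<_X$ or have nested $Y$-neighborhoods, so clause 1.1 makes any two of them comparable; within $Y$: by Proposition~\ref{prop:partial-Ord-Y} any two vertices $y_1,y_2$ are either related in $<_Y$, or have nested $N_X$-neighborhoods, or satisfy $N_X(y_1)\setminus N_X(y_2)\subseteq X_a\setminus X_b$ and $N_X(y_2)\setminus N_X(y_1)\subseteq X_b\setminus X_a$ (or the symmetric statement), and clauses 1.3 and 1.4 cover exactly these three possibilities, so any two vertices of $Y$ are comparable.

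Next I would check antisymmetry and transitivity separately on $X_a\cap X_b$ and on $Y$, noting that no relation of clauses 1.1--1.6 ever makes an element of $Y$ precede an element of $X_a\cap X_b$, so mixed chains cannot arise and it suffices to argue inside each block. On $X_a\cap X_b$: antisymmetry of $<_X$ restricted here is Proposition~\ref{prop:reflexive-anti-version2}, and the set-inclusion part of clause 1.1 is antisymmetric as a suborder of containment; the only thing to rule out is $x<_X x'$ together with $N_Y(x')\subsetneq N_Y(x)$ for distinct $x,x'$, but $x<_X x'$ comes from a rigid pair $\{xy,x'y'\}$ which forces $x'y\notin E$ with $x'\in N_X(y)$ impossible if $N_Y(x')\subseteq N_Y(x)$ would be violated --- more precisely, a rigid pair witnesses that neither neighborhood contains the other, so the two cases of clause 1.1 are mutually exclusive for distinct vertices, giving antisymmetry; transitivity follows because $<_X$ is assumed to be a partial order (Assumption~\ref{ass:sufficiency-cond}), containment is transitive, and a mixed chain $x<_X x'$, $N_Y(x')\subseteq N_Y(x'')$ (or vice versa) can be resolved back into one of the two cases using again that a rigid pair precludes nested neighborhoods. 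The analysis on $Y$ is the same in spirit but has one more clause: I would show that clauses 1.3 and 1.4 together define a partial order by observing that $<_Y$ is a partial order by Assumption~\ref{ass:sufficiency-cond}, that "$N_X(y')\subseteq N_X(y)$'' is a transitive antisymmetric relation, that clause 1.4 is transitive (if $N_X(y_1)\setminus N_X(y_2)\subseteq X_a\setminus X_b$, $N_X(y_2)\setminus N_X(y_1)\subseteq X_b\setminus X_a$ and likewise for $y_2,y_3$, then the set-difference inclusions chain correctly, and in fact $y_1,y_3$ fall under 1.3 or 1.4 again), and finally that one cannot simultaneously have $y<_1 y'$ and $y'<_1 y$ for distinct $y,y'$ across different clauses --- Proposition~\ref{prop:partial-Ord-Y} tells us exactly one of its three alternatives holds for an ordered pair, and in each alternative the direction of $<_1$ is forced, so no contradictory pair of relations can be derived.

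The main obstacle is the interaction between the three defining clauses for the $Y$-order (1.3, 1.4, and the nested-neighborhood half of 1.3): I need to verify that a chain that hops between "$y<_Y y'$", "$N_X(y')\subseteq N_X(y)$", and "the $X_a$/$X_b$-split'' of clause 1.4 always composes to a single relation of one of these forms, and that no such chain loops back. The key observation that makes this work is that the rigid-free condition in Assumption~\ref{ass:sufficiency-cond} (vertices of $X_a\setminus X_b$ are rigid-free and the $a_i$ and $b_j$ have the stated nested neighborhoods) tightly constrains how the "split'' case of clause 1.4 can arise --- essentially it can only be created by pairs of $a$-type and $b$-type neighbors --- so composing a 1.4-relation with a 1.3-relation stays inside the union of the two; I would make this precise by a short case analysis on which of $X_a\setminus X_b$, $X_b\setminus X_a$, $X_a\cap X_b$ the relevant witnessing vertices lie in. Once antisymmetry, transitivity, and totality are established on each block and the blocks are stacked via 1.6 (resp.\ 2.5), the relations are linear orders on $X_a\cap X_b\cup Y$, which is the claim.
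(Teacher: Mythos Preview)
Your overall architecture matches the paper's: work block-by-block on $X_a\cap X_b$ and on $Y$, stack the two blocks via clause~1.6, and appeal to Propositions~\ref{prop:partial-Ord-X} and~\ref{prop:partial-Ord-Y} for totality. Reflexivity and the block separation are handled exactly as in the paper, and your treatment of antisymmetry (observing that the two alternatives in clause~1.1 are mutually exclusive for distinct vertices) is in fact more explicit than the paper's one-line dismissal.

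The gap is in transitivity for the mixed chains. On $X_a\cap X_b$ you write that a chain such as $N_Y(x_1)\subseteq N_Y(x_2)$, $x_2<_X x_3$ ``can be resolved back into one of the two cases using again that a rigid pair precludes nested neighborhoods.'' That observation alone does not give you anything about $x_1$ versus $x_3$: knowing that $N_Y(x_2)$ and $N_Y(x_3)$ are incomparable says nothing about $N_Y(x_1)$ and $N_Y(x_3)$. The mechanism the paper uses is different: from the rigid pair $\{x_2y_2,x_3y_3\}$ one has $y_3\notin N_Y(x_2)\supseteq N_Y(x_1)$, so $x_1\nsim y_3$; if moreover $N_Y(x_1)\not\subseteq N_Y(x_3)$, pick $y_1\in N_Y(x_1)\setminus N_Y(x_3)\subseteq N_Y(x_2)$, so that both $\{x_2y_1,x_3y_3\}$ and $\{x_1y_1,x_3y_3\}$ are rigid pairs. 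The first together with $x_2<_X x_3$ gives $y_1<_Y y_3$, and then the second gives $x_1<_X x_3$. This construction of auxiliary rigid pairs and the back-and-forth between $<_X$ and $<_Y$ is the missing idea in your sketch.

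For the $Y$-block you identify the interaction of clauses~1.3 and~1.4 as the obstacle and propose to control it through the rigid-free condition in Assumption~\ref{ass:sufficiency-cond}. The paper does not use the rigid-free condition here at all: transitivity on $Y$ is again a direct set-theoretic case split (when one relation is of type~1.4 and the other is $N_X(y_3)\subseteq N_X(y_2)$ or $y_2<_Y y_3$) combined with the same auxiliary-rigid-pair trick as above. So this part of your plan points in the wrong direction; the rigid-free hypothesis is saved for later results (Lemma~\ref{lem:helpchords-notin-C1C2-type-2}) and is not what makes Proposition~\ref{prop:lin-ord-intersection} go through.
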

\begin{proof}
We only prove that $<_1$ is a linear order on $X_a\cap X_b\cup Y$. The proof for $<_2$ follows by the symmetry of $<_1$ and $<_2$ on $X_a\cap X_b$ and $Y$. By Definition \ref{def:linOrd-type2}, if we prove that $<_1$ is a linear order on $X_a\cap X_b$ and $Y$, then we have that $<_1$ is a linear order on $X_a\cap X_b\cup Y$. The reflexivity and antisymmetry of $<_1$ follows directly from the definition of $<_1$, and the fact that the relations $<_X$, $<_Y$, and $\subset$ are partial orders. We now prove the transitivity.

Let $x_1, x_2, x_3\in X_a\cap X_b$ such that $x_1<_1 x_2$ and $x_2<_1 x_3$. By Definition \ref{def:linOrd-type2}, there are a few possible cases. First suppose that $x_1<_X x_2$ and $x_2<_X x_3$. As $<_X$ is transitive, we have $x_1<_X x_3$, and thus $x_1<_1 x_3$. 
If $N_Y(x_1)\subseteq N_Y(x_2)$ and $N_Y(x_2)\subseteq N_Y(x_3)$, then by transitivity of  subset relation, we have $N_Y(x_1)\subseteq N_Y(x_3)$. This implies that $x_1<_1 x_3$. 
Now let $N_Y(x_1)\subseteq N_Y(x_2)$ and $x_2<_X x_3$. Then there are $y_2, y_3\in Y$ such that $\{x_2y_2, x_3y_3\}$ is a rigid pair. If $N_Y(x_1)\subseteq N_Y(x_3)$ then by definition $x_1<_1 x_3$. 
So assume there is $y_1\in N_Y(x_1)$ such that $y_1\nsim x_3$. Since $N_Y(x_1)\subseteq N_Y(x_2)$, and $\{x_2y_2, x_3y_3\}$ is a rigid pair, we have $x_1\nsim y_3$. Therefore, $\{x_1y_1,x_3y_3\}$ and $\{x_2y_1,x_3y_3\}$ are rigid pairs. We have that $\{x_2y_1,x_3y_3\}$ is a rigid pair and $x_2<_X x_3$. Then, we have $y_1<_Y y_3$. Now we know that $\{x_1y_1,x_3y_3\}$ is a rigid pair and $y_1<_Y y_3$, thus $x_1<_X x_3$.

A similar argument shows that $x_1<_X x_3$, when $x_1<_X x_2$ and $N_Y(x_2)\subseteq N_Y(x_3)$, and thus $x_1<_1 x_3$. This finishes the proof of transitivity of $<_1$. Now suppose that $y_1<_1 y_2$ and $y_2<_1y_3$. If $y_1, y_2$, and $y_3$ are either related in $<_Y$ or they have nested neighborhoods, then a discussion analogous to the proof of transitivity of $<_1$ on $x_a\cap X_b$ shows that $y_1<_1 y_3$. Now suppose that one of the pairs $y_1, y_2$ or $y_2, y_3$ are related in $<_1$ as in (3) of Proposition \ref{prop:partial-Ord-Y}. 
Suppose without loss of generality that $N_X(y_1)\setminus N_X(y_2)\subset X_a\setminus X_b$ and $N_X(y_2)\setminus N_X(y_1)\subset X_b\setminus X_a$. Then either $N_X(y_3)\subset N_X(y_2)$ or $y_2<_Y y_3$. Suppose that $N_X(y_3)\subset N_X(y_2)$ then  one of the following occurs: (1) $N_X(y_3)\subset N_X(y_1)$, (2) $N_X(y_1)\setminus N_X(y_3)\subset X_a\setminus X_b$ and $N_X(y_3)\setminus N_X(y_1)\subset X_b\setminus X_a$, or (3) $y_1<_Y y_3$. This implies that $y_1<_1 y_3$. Now let $y_2<_Y y_3$. There there is a rigid pair $(v_2y_2,v_3y_3)$ with $v_2<_X v_3$, and $v_2\in X_a\cap X_b$. This implies that $(v_2y_1,v_3y_3)$ is a rigid pair and thus $y_1<_Y y_3$. Therefore, $y_1<_1 y_3$. This finishes the proof of transitivity of $<_1$. 
\end{proof}
\begin{remark}\label{rem:help-Lin-2}
Suppose that $G$ is as in Assumption \ref{ass:sufficiency-cond}. Let $<_2$ be the relation as in Definition \ref{def:linOrd-type2}. Let $a\in X_a\setminus X_b$, $b\in X_b\setminus X_a$, and $x\in X_a\cap X_b$. 
\begin{itemize}
\item[(1)] By Proposition \ref{prop:partial-Ord-X}, we know that, for all $x\in X_a\cap X_b$, if $x$ and $a$ are related in $<_X$ then $a<_X x$, and if $x$ and $b$ are related in $<_X$ then $x<_X b$. Therefore, by 2.1 of Definition \ref{def:linOrd-type2}, if $a<_2 x$ then $x<_X a$, and if $x<_2 a$ then $N_Y(x)\subseteq N_Y(a)$. Similarly, if $b<_2 x$ then $b<_X x$, and if $x<_2 b$ then $N_Y(x)\subseteq N_Y(b)$.
\item[(2)] If there exists $x\in X_a\cap X_b$ such that $a<_X x$ and $x<_X b$, then $N_Y(a)\not\subseteq N_Y(b)$. Since $x<_X b$ and $a<_X x$ then there are rigid pairs $\{by_1,xy_2\}$ and $\{ay_1',xy_2'\}$ in $G$. If $N_Y(a)\subseteq N_Y(b)$ then $b\sim y_1'$ and $a\nsim y_2$. Therefore, $\{ay_1',xy_2\}$ and $\{by_1',xy_2\}$ are rigid pairs. This implies that $ay_2\sim^* xy_1'\sim^* by_2$. Therefore, $ay_2\in\mathcal{A}$, $by_2\in\mathcal{B}$, and in any proper 2-coloring of $\tilde{G}$, both $ay_2$ and $by_2$ receive the same color. But we know, by Assumption \ref{ass:sufficiency-cond}, that all the vertices of $\mathcal{A}$ are red and all the vertices of $\mathcal{B}$ are blue. This implies that $N_Y(a)\not\subseteq N_Y(b)$.
\end{itemize}
\end{remark}
%
In the next two lemmas, we prove that the relations $<_1$ and $<_2$ are linear orders on $V(G)$.
\begin{lemma}\label{lem:type-2-<1-linOrd}
Let $G$ be as in Assumption \ref{ass:sufficiency-cond}. Suppose that the relation $<_1$ is as in Definition \ref{def:linOrd-type2}. Then $<_1$ is a linear order on $V(G)$.  
\end{lemma}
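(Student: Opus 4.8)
The plan is to verify that $<_1$, as given in Definition \ref{def:linOrd-type2}, is a total order on $V(G)=(X_a\cap X_b)\cup Y\cup(X_a\setminus X_b)\cup(X_b\setminus X_a)$. Reflexivity and antisymmetry are immediate from the corresponding properties of $<_X$, $<_Y$, and $\subseteq$ together with the fact, already recorded in Assumption \ref{ass:sufficiency-cond} and Proposition \ref{prop:reflexive-anti-version2}, that $<_X$ and $<_Y$ are partial orders; the explicit layout in 1.2, 1.5, 1.6 makes reflexivity and antisymmetry on the ``new'' comparisons trivial. So the real work is (a) totality — every two vertices are comparable — and (b) transitivity.

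For totality I would split $V(G)$ according to the four blocks. By Proposition \ref{prop:lin-ord-intersection} we already know $<_1$ is linear on $(X_a\cap X_b)\cup Y$, so it remains to place the vertices of $A=X_a\setminus X_b$ and $B=X_b\setminus X_a$ relative to everything. Clause 1.2 totally orders $A$ among themselves (via the fixed chain $a_1<_1\cdots<_1 a_r$, consistent with the nested neighborhoods assumed in Assumption \ref{ass:sufficiency-cond}), and similarly for $B$; clause 1.2 also puts all of $A$ before $X_a\cap X_b$ before all of $B$; clause 1.6 puts all of $X_a$ (hence $A$ and $X_a\cap X_b$) before all of $Y$; and clause 1.5 compares $B$ with $Y$ by declaring $N_Y(a)<_1 b_1$ and $b_s<_1 Y\setminus N_Y(a)$ — here one should check, using Proposition \ref{prop:ab-Y-neighbors}(i), that this is consistent, i.e. that $N_Y(a)$ really forms an initial segment of $Y$ under $<_1$, so the cut point is well-defined. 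This accounts for every pair, giving totality.

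Transitivity is the main obstacle, since it requires checking chains $u<_1 v<_1 w$ where $u,v,w$ range over the four blocks and the three defining clauses (order relation vs.\ nested neighborhoods vs.\ the $X_a/X_b$-split clause 1.4) can mix. I would organize this by the block-type of the middle element $v$: if $v\in X_a\cap X_b$ or $v\in Y$, most sub-cases reduce to transitivity already proved in Proposition \ref{prop:lin-ord-intersection}, plus the block-separation clauses 1.2 and 1.6 which are transitive by construction; the genuinely new sub-cases are those where $v\in A$ or $v\in B$. When $v\in A$: since all of $A$ precedes $X_a\cap X_b$ and $Y$ and $B$, the only nontrivial chains are $a_i<_1 a_j<_1 a_k$ (handled by the fixed chain in 1.2) and $u<_1 a_j<_1 a_k$ with $u$ also in $A$; chains with $w\in (X_a\cap X_b)\cup Y\cup B$ follow from block order. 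When $v\in B$: here one uses clause 1.2 ($X_a\cap X_b<_1 B$) and clause 1.5 ($N_Y(a)<_1 B<_1 Y\setminus N_Y(a)$), so a chain $u<_1 b_j<_1 w$ forces $u\in (X_a\cap X_b)\cup A\cup N_Y(a)\cup B$ and $w\in B\cup (Y\setminus N_Y(a))$, and each resulting combination is resolved by the block inequalities together with the already-established linearity on $(X_a\cap X_b)\cup Y$ and the chain structure of $B$. The one place needing care is when a chain passes from the ``rigid-pair / $<_X$'' regime into the ``nested neighborhood'' regime within $X_a\cap X_b$ or $Y$ — but that compatibility is exactly what Proposition \ref{prop:lin-ord-intersection} already delivers, so I would cite it rather than redo it. Assembling these cases yields transitivity, and hence $<_1$ is a linear order on $V(G)$.
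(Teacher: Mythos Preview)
Your proposal is correct and follows essentially the same approach as the paper: reduce to Proposition~\ref{prop:lin-ord-intersection} on $(X_a\cap X_b)\cup Y$, then place the vertices of $X_a\setminus X_b$ and $X_b\setminus X_a$ via the block layout in clauses 1.2, 1.5, 1.6 of Definition~\ref{def:linOrd-type2}. Your transitivity check is organized by the block of the middle element whereas the paper groups triples according to which of $X_a\cup X_b$, $X_a\cup Y$, etc.\ contains all three, and your explicit remark that Proposition~\ref{prop:ab-Y-neighbors}(i) makes $N_Y(a)$ an initial segment of $Y$ (so clause 1.5 is a well-defined cut) is a detail the paper's proof leaves implicit, but these are cosmetic rather than substantive differences.
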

\begin{proof}
First note that, by Proposition  \ref{prop:lin-ord-intersection}, the relation $<_1$ is a linear order on $V(G)\setminus\{a_1,\ldots, a_r, b_1,\ldots, b_s\}$. We need to prove that, the relation $<_1$ remains a linear order when the vertices of $X_a\setminus X_b$ and $X_b\setminus X_a$ are considered. It directly follows, by Definition \ref{def-proper-biordering-binate}, that $<_1$ is reflexive on $\{a_1,\ldots, a_r, b_1,\ldots, b_s\}$.
Now suppose that $v, v'\in V(G)$, and at least one of $v$ and $v'$ is in $\{a_1,\ldots, a_r, b_1,\ldots, b_s\}$.  Then $v$ and $v'$ are related by one of 1.2, 1.5, and 1.6 of Definition \ref{def:linOrd-type2}. Moreover, by the definition, $<_1$ is antisymmetric on $v, v'$. 

We now prove that $<_1$ is transitive. Let $v_1, v_2, v_3\in V(G)$ such that $v_1<_1 v_2$ and $v_2<_1 v_3$. 
If $v_1, v_2, v_3\in V(G_a)\setminus\{a_1,\ldots, a_r, b_1,\ldots, b_s\}$, then, by Proposition \ref{prop:lin-ord-intersection}, we know that $v_1<_1 v_3$.
If $v_1, v_2, v_3\in X_a\cup X_b$, then, by 1.2 of Definition \ref{def:linOrd-type2}, $<_1$ is transitive on $v_1, v_2, v_3$, and thus $v_1<_1 v_3$.
Moreover, if $v_1, v_2, v_3\in X_a\cup Y$, then by 1.6 of Definition \ref{def:linOrd-type2}, we know that $<_1$ is transitive on $v_1, v_2, v_3$, and thus $v_1<_1 v_3$. 

So assume that among $v_1, v_2, v_3$, one is in $X_b\setminus X_a$, one is in $Y$, and one is in $X_a\setminus X_b$. By Definition \ref{def:linOrd-type2}, we know that vertices of $X_a\setminus X_b$ are minimum elements of $V(G)\setminus \{a_1,\ldots, a_r\}$ under $<_1$. Therefore, $v_1\in X_a\setminus X_b$, and $v_1<_1 v_3$. 
%
This proves that $<_1$ is transitive, and we are done.
\end{proof}
We now prove that $<_2$ is a linear order on $V(G)$.
\begin{lemma}\label{lem:type-2-<2-linOrd}
Let $G$ be as in Assumption \ref{ass:sufficiency-cond}. Suppose that the relation $<_2$ is as in Definition \ref{def:linOrd-type2}. Then $<_2$ is a linear order on $V(G)$.  
\end{lemma}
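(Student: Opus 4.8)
The plan is to mirror the proof of Lemma \ref{lem:type-2-<1-linOrd}, exploiting the symmetry between $<_1$ and $<_2$ on $X_a\cap X_b$ and $Y$ (Proposition \ref{prop:lin-ord-intersection}) so that only the extra vertices $a_1,\dots,a_r,b_1,\dots,b_s$ require new work. First I would invoke Proposition \ref{prop:lin-ord-intersection} to conclude that $<_2$ is already a linear order on $V(G)\setminus\{a_1,\dots,a_r,b_1,\dots,b_s\}$. Then I would handle reflexivity and antisymmetry on the remaining vertices: reflexivity is immediate from Definition \ref{def-proper-biordering-binate} (and the convention $x<_X x$), and antisymmetry follows because whenever one of $v,v'$ lies in $X_a\setminus X_b$ or $X_b\setminus X_a$ the pair is related by exactly one of the clauses 2.1--2.5 of Definition \ref{def:linOrd-type2}, which are mutually exclusive in their direction. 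The genuinely new content is totality and transitivity once the $a_i$ and $b_j$ are thrown in.

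For totality I would check that any two vertices are comparable: two vertices of $X_a\cup X_b$ are compared via 2.1 together with 2.2 (using that by Proposition \ref{prop:partial-Ord-X}(3) an $a$-vertex and a $b$-vertex are never related in $<_X$, so clause 2.2 resolves exactly those pairs, and by Remark \ref{rem:help-Lin-2}(1) the $X_a\cap X_b$ versus $a$ or $b$ comparisons are governed by $<_X$ or nested neighborhoods); a vertex of $X_a\cup X_b$ and a vertex of $Y$ are compared by 2.5; and two vertices of $Y$ are compared by 2.3--2.4 as in Proposition \ref{prop:lin-ord-intersection}. The main case analysis is for transitivity. Given $v_1<_2 v_2<_2 v_3$, I would split on which of the three parts $X_a\setminus X_b$, $X_b\setminus X_a$, $X_a\cap X_b$, $Y$ the $v_i$ lie in. If all three lie in $(X_a\cap X_b)\cup Y$, Proposition \ref{prop:lin-ord-intersection} applies directly. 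If all three lie in $X_a\cup X_b$, I would use transitivity of $<_X$ on $X_a$ and on $X_b$ separately, the fact (Remark \ref{rem:help-Lin-2}(1)) that on $X_a\cap X_b$ versus $a$ or $b$ the relation $<_2$ reduces to $<_X$ or a nested-neighborhood containment, and Remark \ref{rem:help-Lin-2}(2) to rule out the problematic configuration $a<_X x<_X b$ forcing an inconsistency (it cannot, since it would put $a$-vertices and $b$-vertices in the same color class, contradicting Assumption \ref{ass:sufficiency-cond}); the key observation is that no chain can run $a\to x\to b$ inside $X$ under $<_2$ in a way that conflicts with 2.2. If all three lie in $X_b\cup Y$ then $v_3\in Y$ by 2.5 and transitivity is trivial; symmetrically with $X_a\cup Y$.

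The remaining (and I expect the main obstacle) case is when the three vertices are spread across $X_a\setminus X_b$, $Y$, and $X_b\setminus X_a$, or otherwise mix an $X$-part with $Y$ nontrivially. Here I would use 2.5, which places \emph{every} vertex of $X_a\cup X_b$ before every vertex of $Y$: so if any $v_i\in Y$, it must be $v_3$ if the others are in $X$, forcing $v_1<_2 v_3$ immediately; and within $X$, clause 2.2 and the nested-neighborhood clause of 2.1 interact only through $X_a\cap X_b$, which I would analyze using Proposition \ref{prop:ab-Y-neighbors} and Remark \ref{rem:help-Lin-2} to show that a chain $a<_2 x<_2 b$ or $b<_2 x<_2 a$ always yields the correct relation between $a$ and $b$ (namely the one dictated by 2.2 or by a containment of neighborhoods). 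Concretely, the hard sub-step is: if $a<_2 x$ and $x<_2 b$ with $x\in X_a\cap X_b$, deduce $a<_2 b$; by Remark \ref{rem:help-Lin-2}(1) this means $x<_X a$ is false so $N_Y(a)\subseteq N_Y(x)$ fails as the reason, i.e.\ $a<_X x$, and likewise $x<_X b$, whence $N_Y(a)\not\subseteq N_Y(b)$ by Remark \ref{rem:help-Lin-2}(2), and also $N_Y(b)\not\subseteq N_Y(a)$ by the nested-neighborhood hypothesis of Assumption \ref{ass:sufficiency-cond} on $A$ and $B$ — so 2.2 applies and gives $b<_2 a$, which would contradict\ldots — here care is needed with the direction, and one must instead track that the correct conclusion is consistent; I would therefore set up the direction conventions of 2.1--2.2 explicitly at the start and verify each of the finitely many mixed chains. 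Once all mixed chains are dispatched, transitivity holds and $<_2$ is a linear order on $V(G)$, completing the proof.
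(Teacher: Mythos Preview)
Your overall architecture is exactly the paper's: invoke Proposition \ref{prop:lin-ord-intersection} on $(X_a\cap X_b)\cup Y$, use clause 2.5 to make any triple containing a $Y$-vertex trivial, and reduce the remaining transitivity checks to triples $\{a,x,b\}$ with $a\in X_a\setminus X_b$, $x\in X_a\cap X_b$, $b\in X_b\setminus X_a$, analysed via Remark \ref{rem:help-Lin-2}. So the approach is right.

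The concrete gap is your treatment of the chain $a<_2 x<_2 b$. You write that since $x<_X a$ is false, ``$N_Y(a)\subseteq N_Y(x)$ fails as the reason, i.e.\ $a<_X x$''. This inverts the disjunction in clause 2.1. That clause says $a<_2 x$ holds iff $x<_X a$ \emph{or} $N_Y(a)\subseteq N_Y(x)$. By Proposition \ref{prop:partial-Ord-X}(2) the relation $x<_X a$ is impossible, so $a<_2 x$ \emph{forces} $N_Y(a)\subseteq N_Y(x)$. The same reasoning on $x<_2 b$ (using that $b<_X x$ is impossible) gives $N_Y(x)\subseteq N_Y(b)$. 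Hence $N_Y(a)\subseteq N_Y(b)$ and clause 2.1 delivers $a<_2 b$ directly; clause 2.2 is not invoked here and there is no contradiction to untangle. Your attempted route through Remark \ref{rem:help-Lin-2}(2) and clause 2.2 was leading you to the wrong conclusion precisely because of this inverted reading.

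Once this is fixed, the remaining ordered configurations go through the same way as in the paper: $b<_2 x<_2 a$ uses Remark \ref{rem:help-Lin-2}(1) to get $x<_X b$ and $a<_X x$, then Remark \ref{rem:help-Lin-2}(2) to obtain $N_Y(a)\not\subseteq N_Y(b)$ and hence $b<_2 a$ via 2.1 or 2.2; the mixed chains $x<_2 a<_2 b$, $a<_2 b<_2 x$, $x<_2 b<_2 a$, $b<_2 a<_2 x$ are each a short check combining the corrected reading of 2.1 with Remark \ref{rem:help-Lin-2}. No appeal to Proposition \ref{prop:ab-Y-neighbors} or to the nested-neighborhood hypothesis on $A,B$ is needed in this lemma.
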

\begin{proof}
By Proposition \ref{prop:lin-ord-intersection}, we have that $<_2$ is a linear order on the set of vertices of 
$V(G)\setminus \{a_1,\ldots, a_r, b_1, \ldots, b_s\}$. We need to prove that $<_2$ remains a linear order when the vertices of $X_a\setminus X_b$ and $X_b\setminus X_a$ are considered. It directly follows, by Definition \ref{def-proper-biordering-binate}, that $<_2$ is reflexive on $\{a_1,\ldots, a_r, b_1, \ldots, b_s\}$.
Now suppose that $v, v'\in V(G)$, and at least one of $v$ and $v'$ is in $\{a_1,\ldots, a_r, b_1, \ldots, b_s\}$. Then $v$ and $v'$ are related by one of 2.1, 2.2, and 2.5 of Definition \ref{def:linOrd-type2}. Moreover, by the definition, $<_2$ is antisymmetric on $v, v'$. 

We now prove that, for any triple $v_1, v_2, v_3\in V(G)$, the relation $<_2$ is transitive on $v_1, v_2, v_3.$ Since $<_2$ is a linear order on $V(G)\setminus \{a_1, \ldots, a_r, b_1, \ldots, b_s\}$, we assume that at least one of $v_1, v_2, v_3$ is in $\{a_1, \ldots, a_r, b_1, \ldots, b_s\}$. 
If one of $v_1, v_2,$ and $v_3$ is in $Y$, then by 2.5 of Definition \ref{def:linOrd-type2}, we know that $<_2$ is transitive on $v_1, v_2, v_3$. So suppose that $v_1, v_2, v_3\in X_a\cup X_b$.

If $v_1, v_2, v_3\in X_a$ or $v_1, v_2, v_3\in X_b$, then any pair of vertices $v_1, v_2$, and $v_3$ are either related in $<_X$ or they have nested neighborhoods in $Y$. Therefore, they are related in $<_2$ by 2.1 of Definition \ref{def:linOrd-type2}. A discussion similar to the proof of Proposition \ref{prop:lin-ord-intersection} for $X_a\cup X_b$, shows that $<_2$ is transitive on $v_1, v_2,$ and $v_3$. 
Now let $v_1, v_2\in X_a$ and $v_3\in X_b\setminus X_a$. If none of $v_1$ and $v_2$ is in $X_a\setminus X_b$, then $v_1, v_2, v_3$ are all in $X_b$. So assume that at least one $v_1, v_2, v_3$ is in $X-a\setminus X_b$. This implies that $\{v_1, v_2, v_3\}=\{a, x, b\}$, where $a\in X_a\setminus X_b$, $b\i X_b\setminus X_a$, and $x\in X_a\cap X_b$. We consider the following cases:
\begin{itemize}
\item $a<_2 x$ and $x<_2 b$. Then, by (1) of Remark \ref{rem:help-Lin-2}, we have that $N_Y(a)\subseteq N_Y(x)$, and $N_Y(x)\subseteq N_Y(b)$. This implies that $N_Y(a)\subseteq N_Y(b)$, and thus, by 2.1 of Definition \ref{def:linOrd-type2}, $a<_2 b$.
\item $b<_2 x$ and $x<_2 a$. Then, by (1) of Remark \ref{rem:help-Lin-2}, we have that $x<_X b$ and $a<_X x$. Therefore, by (2) of Remark \ref{rem:help-Lin-2}, $N_Y(a)\not\subseteq N_Y(b)$. Then either $N_Y(b)\subseteq N_Y(a)$ or $N_Y(b)\not\subseteq N_Y(a)$. By 2.1 and 2.2 of Definition, for both cases, we have that $b<_2 a$.
\item $x<_2 a$ and $a<_2 b$. Then by (1) of Remark \ref{rem:help-Lin-2}, $a<_X x$, and by 2.1 of Definition \ref{def:linOrd-type2}, $N_Y(a)\subseteq N_Y(b_1)$. If $x$ and $b$ are related in $<_X$, then $x<_X b$. Then, by (2) of Remark \ref{rem:help-Lin-2}, $N_Y(a)\not\subseteq N_Y(b)$, which is a contradiction. So $N_Y(x)\subseteq N_Y(b)$, and thus $x<_2 b$. If $a<_2 b$ and $b<_2 x$ then an analogous discussion proves that $a<_2 x$.
\item $x<_2 b$ and $b<_2 a$. Then by (1) of Remark \ref{rem:help-Lin-2}, $N_Y(x)\subseteq N_Y(b)$. If $N_Y(a)\subseteq N_Y(x)$ then $N_Y(a)\subseteq N_Y(b)$, and thus $a<_2 b$ which is not true. Therefore, either $N_Y(x)\subseteq N_Y(a)$ or $a<_X x$. In both cases, by 2.1 of Definition \ref{def:linOrd-type2}, we have that $x<_2 a$. If $b<_2 a$ and $a<_2 x$, then an analogous discussion proves that $b<_2 x$.
\end{itemize}
This finishes the proof of transitivity of $<_2$.
\end{proof}
Now that we know that the relations $<_1$ and $<_2$ of Definition \ref{def:linOrd-type2} are linear orders, the next step is to show that linear orders $<_1$ and $<_2$ satisfy Equation (\ref{eq:2-dim-geo}). We assume that $\C_1$ and $\C_2$ are completions of $<_1$ and $<_2$, respectively. We first prove that chords of a rigid pair belong to different completions $\C_1$ and $\C_2$. Then we prove that isolated vertices of $\tilde{G}$ belong to at most one completion $\C_1$ and $\C_2$. Note that we already defined $<_1$ and $<_2$ in a way that the non-edges of form $ab$ do not belong to $\C_1$. Recall that definitions of linear orders $<_1$ and $<_2$ are symmetric on $Y$ and $X_a\cap X_b$. 
The next lemma gives us the required results to prove that chords of a rigid pair belong to different completions $\C_1$ and $\C_2$. This is where the rigid-free conditions show up and help us with the proofs.

\begin{lemma}\label{lem:helpchords-notin-C1C2-type-2}
Let $G$, $<_1$ and $<_2$ be as in Assumption \ref{ass:sufficiency-cond}. Suppose $\{x_1y_1,x_2y_2\}$ is a rigid pair and $x_1,x_2\in X_a\cap X_b$. If $x_1<_1 x_2$ then 
\begin{itemize}
\item[(i)] For all $y\in N_Y(x_1)$, $y<_1 y_2$.
\item[(ii)] For all $x\in N_X(y_2)$, $x_1<_1 x$.
\item[(iii)] For any $x\in X$ and $y\in Y$ with $x<_1 x_1$ and $y_2<_1 y$, we have $x\nsim y$.
\end{itemize}
Similarly if $x_1<_2 x_2$ and we replace $<_1$ by $<_2$ in the statements (i)-(iii) then statements (i), (ii), and (iii) hold.
\end{lemma}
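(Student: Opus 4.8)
The plan is to first read off the local picture that the hypotheses force, then prove (i), (ii), (iii) in that order (each uses the previous ones), and finally obtain the $<_2$ statements from symmetry. \emph{Setup:} since $x_1,x_2\in X_a\cap X_b$ and $\{x_1y_1,x_2y_2\}$ is a rigid pair, $y_1\in N_Y(x_1)\setminus N_Y(x_2)$ and $y_2\in N_Y(x_2)\setminus N_Y(x_1)$, so $N_Y(x_1)$ and $N_Y(x_2)$ are not nested; hence by 1.1 of Definition~\ref{def:linOrd-type2} the hypothesis $x_1<_1 x_2$ forces $x_1<_X x_2$, and then, by the facts about $f$ recorded in the proof of Proposition~\ref{prop:reflexive-anti-version2}, $f(x_1y_2)$ is red, $f(x_2y_1)$ is blue and $y_1<_Y y_2$, whence $y_1<_1 y_2$ by 1.3. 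Since $x_1,x_2\in X_a$, clause 1.6 gives $x_1<_1 x_2<_1 y_1<_1 y_2$. Two facts will be used constantly: any vertex that $<_1$-precedes $x_1$ lies in $X_a$ (by 1.2 and 1.6), and $x_1$, being in $X_a\cap X_b$, is adjacent to every vertex of $X_a\cup X_b$. I will argue the $<_1$ statement; the $<_2$ statement then follows verbatim after reading 2.1--2.5 for 1.1--1.6 and reversing $<_X$, since the only asymmetry between $<_1$ and $<_2$ in Definition~\ref{def:linOrd-type2} concerns the placement of $X_a\setminus X_b$ and $X_b\setminus X_a$ relative to $Y$, which is irrelevant for a rigid pair lying inside $(X_a\cap X_b)\cup Y$.

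For (i), fix $y\in N_Y(x_1)$ and apply Proposition~\ref{prop:partial-Ord-Y} to $\{y,y_2\}$. Case~(3) is impossible because $x_1\in N_X(y)\setminus N_X(y_2)$ while $x_1\in X_a\cap X_b$. In case~(1), nesting must go $N_X(y_2)\subseteq N_X(y)$, so $y<_1 y_2$ by 1.3. In case~(2) there is a rigid pair $\{x'y,x''y_2\}$; since $x''\in X_a\cup X_b$ we have $x_1\sim x''$, so $\{x_1y,x''y_2\}$ is also a rigid pair, with chords $x_1y_2$ (red) and $x''y$; properness of $f$ forces $f(x''y)$ blue and then $f(x'y_2)$ red, that is $y<_Y y_2$, so $y<_1 y_2$. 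For (ii), take $x\in N_X(y_2)$, so $x\neq x_1$. If $x\in X_b\setminus X_a$ then $x_1<_1 x$ by 1.2. I rule out $x\in X_a\setminus X_b$: if $x\nsim y_1$ then $\{xy_2,x_1y_1\}$ is a rigid pair whose $a$-vertex chord $xy_1$ has the non-$a$-vertex $x_1y_2$ as a $\tilde{G}$-neighbor, so $xy_1\in\mathcal{A}$ and $f(xy_1)$ is red, contradicting $f(x_1y_2)$ red; and if $x\sim y_1$ then $y_1,y_2\in N_Y(x)$ while $\{x_1y_1,x_2y_2\}$ is a rigid pair with $x_1,x_2\in X_a\cap X_b$, contradicting the rigid-freeness of $x$ granted by Assumption~\ref{ass:sufficiency-cond} (condition (i) of Definition~\ref{def:rigid-free-conditions}). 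Finally, if $x\in X_a\cap X_b$ with $x<_1 x_1$, then $y_2\in N_Y(x)\setminus N_Y(x_1)$ forces $x<_X x_1$; choosing a witnessing rigid pair $\{xw_1,x_1w_2\}$ with $f(xw_2)$ red, the quadruple $\{xy_2,x_1w_2\}$ is again a rigid pair, whose chords $xw_2$ and $x_1y_2$ force $f(x_1y_2)$ blue, a contradiction. Hence $x_1<_1 x$ in all cases.

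For (iii), suppose $x<_1 x_1$, $y_2<_1 y$, and, for contradiction, $x\sim y$; then $x\in X_a$, and by (ii) $x\nsim y_2$. If $x\in X_a\setminus X_b$, then Proposition~\ref{prop:ab-Y-neighbors}(i) applied with $a=x$ forces, since $y\in N_Y(x)$, that either $y_2\in N_Y(x)$ (contradicting $x\nsim y_2$) or $y<_1 y_2$ (contradicting $y_2<_1 y$); so $x\in X_a\cap X_b$. By (i), $x_1\nsim y$ (else $y<_1 y_2$), and $y\in N_Y(x)\setminus N_Y(x_1)$ forces $x<_X x_1$, hence $x<_X x_2$ by transitivity. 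A witness for $x<_X x_1$ yields $u_2\in N_Y(x_1)\setminus N_Y(x)$ with $f(xu_2)$ red, so $\{xy,x_1u_2\}$ is a rigid pair and $f(x_1y)$ is blue. Now if $y\in N_Y(x_2)$ then $\{x_1y_1,x_2y\}$ is a rigid pair and $f(x_2y_1)$ blue forces $f(x_1y)$ red; and if $y\notin N_Y(x_2)$ then $\{xy,x_2y_2\}$ is a rigid pair which, as $x<_X x_2$, has $f(xy_2)$ red and $f(x_2y)$ blue and therefore witnesses $y<_Y y_2$, so $y<_1 y_2$; either way we reach a contradiction. I expect the delicate point to be exactly this last part, (iii) with $x\in X_a\cap X_b$: one must keep the orientations of $<_X$ and $<_Y$ straight while tracking which of several chords is forced red and which blue, and this is where an error would most easily creep in; the remaining bookkeeping, including the handling of the two possible orientations in the cases of Proposition~\ref{prop:partial-Ord-Y}, is routine.
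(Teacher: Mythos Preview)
Your argument for the $<_1$ case is correct and follows essentially the same route as the paper's proof: the setup, the coloring/antisymmetry bookkeeping in (i) and (ii), and the final contradiction in (iii) all match, with only cosmetic differences (you argue (i) directly via Proposition~\ref{prop:partial-Ord-Y} rather than by contradiction, and in (iii) you dispose of $x\in X_a\setminus X_b$ via Proposition~\ref{prop:ab-Y-neighbors} instead of via $<_X$).

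The real gap is your treatment of the $<_2$ case. You claim the asymmetry between $<_1$ and $<_2$ is ``irrelevant for a rigid pair lying inside $(X_a\cap X_b)\cup Y$'', but parts (ii) and (iii) quantify over \emph{all} $x\in N_X(y_2)$ and all $x\in X$, so vertices of $X_a\setminus X_b$ and $X_b\setminus X_a$ are very much in play. Your $<_1$ proof of (ii) uses two ingredients that do \emph{not} transfer verbatim: clause~1.2 of Definition~\ref{def:linOrd-type2} to handle $x\in X_b\setminus X_a$, and the rigid-free condition to rule out $x\in X_a\setminus X_b$. There is no $<_2$-analogue of 1.2, and Assumption~\ref{ass:sufficiency-cond} fixes rigid-freeness only for the vertices of $X_a\setminus X_b$, so you cannot invoke it for $b\in X_b\setminus X_a$ in the mirrored argument. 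The paper is explicit about this: it writes ``The proof of (ii) for $<_2$ is slightly different'' and gives a separate argument. For $x\in X_a\setminus X_b$ one simply uses Proposition~\ref{prop:partial-Ord-X}(2) together with $y_2\in N_Y(x)\setminus N_Y(x_1)$ to force either $x<_X x_1$ or $N_Y(x_1)\subseteq N_Y(x)$, either of which gives $x_1<_2 x$ by 2.1 (no rigid-freeness needed). For $x\in X_b\setminus X_a$ one must instead show that $x_1<_X x$ is impossible: a witnessing rigid pair $\{x_1\alpha,x\beta\}$ would produce the rigid pair $\{x_1\alpha,xy_2\}$, whose chord $x\alpha\in\mathcal{B}$ is blue, forcing $f(x_1y_2)$ red, contradicting $f(x_1y_2)$ blue in the $<_2$ setup; hence the neighborhoods are nested and 2.1 again gives $x_1<_2 x$. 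A similar adjustment is needed in (iii). So the $<_2$ half is recoverable, but not by the symmetry you invoke.
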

\begin{proof}
We prove the lemma for $<_1$. The proof for $<_2$ follows similarly. Suppose that $\{x_1y_1,x_2y_2\}$, where $x_1, x_2\in X_a\cap X_b$, is a rigid pair with $x_1<_1 x_2$. Then, by 1.1 of Definition \ref{def:linOrd-type2}, we know that $x_1<_X x_2$, and thus $y_1<_Y y_2$. This implies that $x_1<_1 x_2<_1 y_1<_1 y_2$. Suppose 

First we prove (i) by contradiction. Let $w\in N_Y(x_1)$ and $y_2<_1 w$. 
Since $\{x_1y_1,x_2y_2\}$ is a rigid pair, $x_1\notin N_X(y_2)$. We also have $x_1\in N_X(w)$, and thus $N_X(w)\not\subseteq N_X(y_2)$. But by assumption $y_2<_1 w$, and thus $y_2<_Y w$. Since $y_2$ and $w$ are related in $<_Y$ there are $u\in N_X(y_2)\setminus N_X(w)$ and $z\in N_X(w)\setminus N_X(y_2)$ such that $\{uy_2,zw\}$ is a rigid pair. Since $x_1\in N_X(w)\setminus N_X(y_2)$ then $\{uy_2,x_1w\}$ is also a rigid pair. This together with the fact that $\{x_1y_1,x_2y_2\}$ is a rigid pair implies that $x_2y_1\sim^* x_1y_2\sim^* uw$ in $\tilde{G}$.
%
%
Since $uw$ and $x_2y_1$ receive the same color, then $x_1<_X x_2$ if and only if $x_1<_X u$. We know that $x_1<_X x_2$. So $x_1<_X u$, and thus $w<_Y y_2$. This together with Definition \ref{def:linOrd-type2} implies that $w<_1 y_2$, which contradicts our assumption. 
%
%

We now prove (ii) for $<_1$. Let $x\in N_X(y_2)$. If $x\in X_b\setminus X_a$ then by Definition \ref{def:linOrd-type2}, $x_1<_1 x$.  If $x\in X_a\cap X_b$, then an argument similar to part (i) proves that $x_1<_1 x$. Now let $x\in X_a\setminus X_b$. Since $y_2\in N_Y(x)$ and $y_1<_1 y_2$, by Proposition \ref{prop:ab-Y-neighbors}, we know that $y_1\in N_Y(x)$. This implies that the neighborhood of $x$ contains the rigid pair $\{x_1y_1,x_2y_2\}$. But, a graph $G$ of Assumption \ref{ass:sufficiency-cond} satisfies the rigid-free conditions. Therefore, $y_2$ has no neighbor in $X_a\setminus X_b$. Then, for all $x\in N_X(y_2)$, we have that $x_1<_1 x$.

The proof of (ii) for $<_2$ is slightly different. Suppose that $\{x_1y_1,x_2y_2\}$ is a rigid pair with $x_1, x_2\in X_a\cap X_b$, and $x_1<_2 x_2$. Then $x_1<_2 x_2<_2 y_1<_2 y_2$. Let $x\in N_X(y_2)$. If $x\in X_a\cap X_b$, then an argument similar to part (i) proves that $x_1<_2 x$. If $x\in X_b\setminus X_a$, then either $x<_X x_1$ or $N_Y(x)\subset N_Y(x_1)$. Then by Definition \ref{def:linOrd-type2}, we have that $x_1<_2 x$. 

Now let $x\in X_a\setminus X_b$. Since $y_2\notin N_Y(x_1)$, we know that $N_Y(x)\not\subseteq N_Y(x_1)$. Then, by (1) of Remark \ref{rem:help-Lin-2}, we know that $x_1<_2 x$. This proves that for all $x\in N_X(y)$, $x_1<_2 x$. 

We now prove (iii). Assume that there are $v\in X_a\cup X_b$ and $z\in Y$ such that $v<_1 x_1<_1 x_2<_1 y_1<_1 y_2<_1 z$. 
Suppose, by contradiction, that $v\sim z$. By Part (i), we have that $x_1\nsim z$, and so $N_Y(v)\not\subseteq N_Y(x_1)$. Then, by Proposition \ref{prop:partial-Ord-X}, we know that $v$ and $x_1$ are related in $<_X$. 
If $v\in\{a_1,a_2\}$, then, by Proposition \ref{prop:partial-Ord-X}, we know that $v<_X x_1$. Now let $v\in X_a\cap X_b$. Since $v<_1 x_1$, by 1.1 of Definition \ref{def:linOrd-type2}, we have that $v<_X x_1$.
This implies that, there is rigid pair $\{vz, x_1w\}$ with $w\in N_Y(x_1)$. Since $v<_X x_1$ then $z<_Y w$. Then, by 1.3 of Definition \ref{def:linOrd-type2}, we have that $z<_1 w$. This is impossible since by Part (i) for all $y\in N_Y(x_1)$, we have that $y<_1 y_2$. Therefore, $v\nsim z$, and we are done. The proof for $<_2$ follows by an analogous discussion.
\end{proof}
\begin{corollary}\label{cor:chords-notin-C1C2-type2}
Let $G$  be as in Assumption \ref{ass:sufficiency-cond}, and $<_1$ and $<_2$ be as in Definition \ref{def:linOrd-type2}. Suppose $\C_1$ and $\C_2$ are completions corresponding to $<_1$ and $<_2$, respectively. Then each chord of a rigid pair $\{x_1y_1,x_2y_2\}$ with $x_1,x_2\in X_a\cap X_b$ belong to at most one of $\C_1$ and $\C_2$.
\end{corollary}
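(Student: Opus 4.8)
The plan is to exploit the symmetry built into Definition~\ref{def:linOrd-type2} and reduce the statement to two applications of Lemma~\ref{lem:helpchords-notin-C1C2-type-2}, one for each completion.

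First I would fix an orientation. Since $\{x_1y_1,x_2y_2\}$ is a rigid pair with $x_1,x_2\in X_a\cap X_b$, we have $y_1\in N_Y(x_1)\setminus N_Y(x_2)$ and $y_2\in N_Y(x_2)\setminus N_Y(x_1)$, so the $Y$-neighbourhoods of $x_1$ and $x_2$ are incomparable; by Proposition~\ref{prop:partial-Ord-X}(1) the vertices $x_1,x_2$ are therefore related in $<_X$, and, interchanging their names if needed, I assume $x_1<_X x_2$. By clauses 1.1 and 2.1 of Definition~\ref{def:linOrd-type2} this gives $x_1<_1 x_2$ and $x_2<_2 x_1$. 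Since the two chords of the pair are $x_1y_2$ and $x_2y_1$, it suffices to prove (A) $x_1y_2\notin\C_1$ and (B) $x_2y_1\notin\C_2$: together these say that $x_1y_2$ lies in at most $\C_2$ and $x_2y_1$ lies in at most $\C_1$, which is exactly the assertion.

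For (A), suppose $x_1y_2\in\C_1$. Since $x_1\in X_a$ and $y_2\in Y$, clause 1.6 gives $x_1<_1 y_2$, so by the definition of the completion there are adjacent vertices $u\sim v$ with $u<_1 x_1$ and $y_2<_1 v$. I would first locate $v$: by Lemma~\ref{lem:helpchords-notin-C1C2-type-2}(ii) every vertex of $N_X(y_2)$ exceeds $x_1$ in $<_1$, while by clause 1.2 every vertex of $X_a\setminus X_b$ precedes $x_1$; hence $y_2$ has no neighbour in $X_a\setminus X_b$, so $y_2\in Y\setminus N_Y(a)$ for every $a\in X_a\setminus X_b$, and clause 1.5 then puts all of $X_b\setminus X_a$ below $y_2$ in $<_1$. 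Together with clause 1.6 (which puts $X_a$, and hence $X_a\cap X_b$, below $y_2$) this forces $v\in Y$. Next I would locate $u$: clauses 1.2 and 1.6 place $X_b\setminus X_a$ and $Y$ entirely above $x_1$, so $u\in X_a\subseteq X_a\cup X_b$. Now $u\in X_a\cup X_b$, $v\in Y$, $u<_1 x_1$ and $y_2<_1 v$, so Lemma~\ref{lem:helpchords-notin-C1C2-type-2}(iii) yields $u\nsim v$, a contradiction. Statement (B) is the same argument carried out for the order $<_2$: Lemma~\ref{lem:helpchords-notin-C1C2-type-2} is symmetric in the two edges of a rigid pair, so since $x_2<_2 x_1$ we may apply it with $x_2,y_1$ in the roles of $x_1,y_2$; moreover clause 2.5 now places all of $X_a\cup X_b$ below all of $Y$, so a witness $v$ above $y_1$ is immediately forced into $Y$ and a witness $u$ below $x_2$ into $X_a\cup X_b$, whereupon part (iii) of the lemma again produces a contradiction.

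The one step that is more than bookkeeping, and the place I expect the difficulty to sit, is the part of (A) that excludes a witness $v\in X_b\setminus X_a$ lying above $y_2$ in $<_1$: this is precisely where the rigid-free hypothesis of Assumption~\ref{ass:sufficiency-cond} enters, by way of Lemma~\ref{lem:helpchords-notin-C1C2-type-2}(ii), and where one must combine it with the exact interleaving of $X_b\setminus X_a$ with $Y$ prescribed by clauses 1.2, 1.5 and 1.6. For the order $<_2$ no such case arises, since clause 2.5 separates $X_a\cup X_b$ from $Y$ completely; so (B) is the strictly easier half. Everything else is a routine check of which clause of Definition~\ref{def:linOrd-type2} governs a given pair of vertices.
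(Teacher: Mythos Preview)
Your proposal is correct and follows essentially the same route as the paper: fix the orientation $x_1<_X x_2$, then use Lemma~\ref{lem:helpchords-notin-C1C2-type-2} to show $x_1y_2\notin\C_1$ and, symmetrically, $x_2y_1\notin\C_2$. You are in fact more careful than the paper, which simply says ``by Lemma~\ref{lem:helpchords-notin-C1C2-type-2}, $x_1$ and $y_2$ are not between two adjacent vertices'' without explaining why any putative witnesses must lie in $X$ and $Y$ respectively; your unpacking of this step for $<_1$ (in particular the exclusion of $v\in X_b\setminus X_a$ via clause~1.5) is exactly the hidden work.
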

\begin{proof}
Suppose $\{x_1y_1,x_2y_2\}$ is a rigid pair. Without loss of generality, let $x_1<_1 x_2$. Then, by 1.1 of Definition \ref{def:linOrd-type2}, we have that $x_1<_X x_2$, and thus $y_1<_Y y_2$. Then, by 1.3, and 1.6 of Definition \ref{def:linOrd-type2}, we have that $x_1<_1 x_2<_1 y_1<_1 y_2$. Since $x_1\sim y_1$ then $x_2y_1\in\C_1$. By Lemma \ref{lem:helpchords-notin-C1C2-type-2}, we know that $x_1$ and $y_2$ are not between two adjacent vertices, and thus by definition of completion $x_1y_2\notin\C_1$. An analogous discussion proves that $x_2y_1\notin\C_2$.
\end{proof}

The next lemma proves a similar result for rigid pairs $\{xy,x'y'\}$, for which, exactly one of $x$ or $x'$ belongs to $X_a\setminus X_b$ or $X_b\setminus X_a$. Note that both $x$ and $x'$ cannot belong to $X_a\setminus X_b$ since the neighborhoods of vertices of $X_a\setminus X_b$ are nested. Similarly, both $x$ and $x'$ cannot belong to $X_b\setminus X_a$.  

\begin{lemma}\label{lem:chords-notin-C1C2-typ-2-2}
Let $G$ be as in Assumption \ref{ass:sufficiency-cond}, and $<_1$ and $<_2$ be linear orders of Definition \ref{def:linOrd-type2} with completions $\C_1$ and $\C_2$, respectively. If there exists a rigid pair of $G$ of one of the forms $\{ay_1,x_2y_2\}$ or $\{x_1y_1,by_2\}$, then different chords of the rigid pair belong to different completions $\C_1$ and $\C_2$. 
\end{lemma}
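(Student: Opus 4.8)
The plan is to handle the two forms in turn. For a rigid pair $\{ay_1,x_2y_2\}$ of the first form one necessarily has $a\in X_a\setminus X_b$ and $x_2\in X_a\cap X_b$ (the only way $a\sim x_2$), and its two chords are the $a$-vertex $ay_2$ and the non-edge $x_2y_1$; I will show $x_2y_1\in\C_1\setminus\C_2$ and $ay_2\in\C_2\setminus\C_1$. For the second form $\{x_1y_1,by_2\}$ one has $x_1\in X_a\cap X_b$ and $b\in X_b\setminus X_a$, and the target is $by_1\in\C_1\setminus\C_2$ and $x_1y_2\in\C_2\setminus\C_1$. Since the orders $<_1$ and $<_2$ of Definition \ref{def:linOrd-type2} are \emph{not} symmetric in the roles of $X_a\setminus X_b$ and $X_b\setminus X_a$, the second form is argued directly along the same template rather than quoted from the first by symmetry.

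\emph{Fixing colours and positions.} Since $ay_2\sim^* x_2y_1$ in $\tilde G$ and $x_2y_1$ is neither an $a$- nor a $b$-vertex, $ay_2\in\mathcal{A}$, so by Assumption \ref{ass:sufficiency-cond} $f(ay_2)=\text{red}$ and hence $f(x_2y_1)=\text{blue}$. The pair being rigid, the neighbourhoods of $a$ and $x_2$ are not nested, so by Proposition \ref{prop:partial-Ord-X}(2) we get $a<_X x_2$, and therefore $y_1<_Y y_2$ (for a rigid pair, $x<_X x'$ iff $y<_Y y'$, as in Proposition \ref{prop:reflexive-anti-version2}). Feeding this into Definition \ref{def:linOrd-type2}, items 1.2, 1.6, 1.3 give $a<_1 x_2<_1 y_1<_1 y_2$, and items 2.1, 2.5, 2.3 give $x_2<_2 a<_2 y_2<_2 y_1$. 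From these, $a\sim y_1$ with $a<_1 x_2<_1 y_1$ shows $x_2y_1$ is squeezed between the adjacent pair $a,y_1$, so $x_2y_1\in\C_1$; and $x_2\sim y_2$ with $x_2<_2 a<_2 y_2$ shows $ay_2$ is squeezed between $x_2,y_2$, so $ay_2\in\C_2$. The same bookkeeping for the second form (using $x_1<_X b$, $y_1<_Y y_2$, and the position of $b$ in $<_1$ prescribed by item 1.5) yields $by_1\in\C_1$ and $x_1y_2\in\C_2$.

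\emph{The exclusions.} For $ay_2\notin\C_1$: by item 1.2 nothing of $X_a\cup X_b$ precedes $a$ in $<_1$ except the $a_i$ with smaller index, each of which has $N_Y(a_i)\subseteq N_Y(a)\not\ni y_2$ and so is not adjacent to $y_2$; and by Proposition \ref{prop:ab-Y-neighbors}(i) every vertex of $N_Y(a)$ lies before $y_2$ in $<_1$, so $a$ has no $<_1$-later neighbour past $y_2$ — hence no adjacent pair can squeeze $ay_2$. For $x_2y_1\notin\C_2$: a squeezing pair would have to be either a vertex $v_r<_2 x_2$ adjacent to $y_1$ or a vertex $v_s\in N_Y(x_2)$ with $v_s>_2 y_1$. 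In the first case, since $y_1\notin N_Y(x_2)$, item 2.1 forces $x_2<_X v_r$, whence $\{x_2w,v_ry_1\}$ is a rigid pair for a suitable $w\in N_Y(x_2)\setminus N_Y(v_r)$, and Definition \ref{def-proper-biordering-binate} applied to it would colour $x_2y_1$ red, contradicting $f(x_2y_1)=\text{blue}$; in the second case the same contradiction is reached (any $y\in N_Y(x_2)$ with $y_1<_2 y$ would, via items 2.3--2.4, either force $x_2\sim y_1$ or produce a rigid pair colouring $x_2y_1$ red). This finishes the first form. For the second form the exclusion $by_1\notin\C_2$ mirrors the above, and $x_1y_2\notin\C_1$ is where the rigid-free conditions of Assumption \ref{ass:sufficiency-cond} are needed: one first shows $y_2\notin N_Y(a)$ for the vertex $a\in X_a\setminus X_b$ distinguished in item 1.5 — ruling out ``$y_1,y_2\in N_Y(a)$'' by the rigid-free condition (which would forbid the rigid pair $\{x_1y_1,by_2\}$) and ruling out ``$y_2\in N_Y(a),\,y_1\notin N_Y(a)$'' by the colouring (it would make $\{ay_2,x_1y_1\}$ a rigid pair with the two red-red adjacent chords $ay_1,x_1y_2$) — and then, with $y_2$ sitting after all of $X_b\setminus X_a$ in $<_1$, the remaining potential squeezing pairs for $x_1y_2$ are eliminated exactly as before.

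\emph{Main obstacle.} The work is entirely in the exclusions. One must enumerate all vertices that could serve as an endpoint of a squeezing adjacent pair and kill each possibility, and the eliminations rest on the two structural inputs of the construction: the compatibility of the $2$-colouring with $\mathcal{A}$ and $\mathcal{B}$ (which pins down the colours of the chords $x_2y_1$, $x_1y_2$, $by_1$) and, for the $\{x_1y_1,by_2\}$ form, the rigid-free conditions; aligning these with the definitions of $<_1,<_2$ — and, for the second form, first clearing the $b$-versus-$y$ ambiguity of $<_1$ via item 1.5 — is the heart of the argument.
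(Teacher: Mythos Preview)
Your approach is the paper's: fix the colours of the two chords via $\mathcal A,\mathcal B$, read off the positions $a<_1 x_2<_1 y_1<_1 y_2$ (resp.\ $b<_2 x_1<_2 y_1<_2 y_2$), exhibit one chord in each completion, and then argue the exclusions by showing no adjacent pair can squeeze the remaining chord. In fact you do \emph{more} than the paper. For the first form the paper only checks $x_2y_1\in\C_1$ and $ay_2\notin\C_1$; for the second it only checks $x_1y_2\in\C_2$ and $by_1\notin\C_2$ (the printed proof also has several typos: ``$a\in N(y_2)$'' should read $a\in N(y_1)$, and ``$x_1y_1\in\C_2$'', ``$by_2\notin\C_2$'' should read $x_1y_2\in\C_2$, $by_1\notin\C_2$). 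The two exclusions you add, $x_2y_1\notin\C_2$ and $x_1y_2\notin\C_1$, are exactly what is needed downstream for $\C_1\cap\C_2=\emptyset$, and your tools --- the colouring contradiction, and for the second form the rigid-free condition together with the red--red clash on $\{ay_2,x_1y_1\}$ to force $y_2\notin N_Y(a)$ --- are the right ones.

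One caution: your final ``eliminated exactly as before'' for $x_1y_2\notin\C_1$ should be written out, because $<_1$ is not the mirror of $<_2$. A squeezing pair $u\le_1 x_1<_1 y_2\le_1 v$ has $u\in X_a$ (by 1.6 and 1.2) and, since you have placed $y_2$ after all of $X_b\setminus X_a$, $v\in Y$. When $u\in X_a\cap X_b$ with $u<_1 x_1$, item 1.1 gives either $N_Y(u)\subseteq N_Y(x_1)$ (whence $v\in N_Y(x_1)$, and then the colouring argument you used for $x_2y_1\notin\C_2$ applies with $<_Y$ in place of $<_X$) or $u<_X x_1$, which again yields a rigid pair forcing $f(x_1y_2)=\text{blue}$, contradicting the red you already derived. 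When $u=a_i$, your argument needs $y_2\notin N_Y(a_i)$ so that Proposition~\ref{prop:ab-Y-neighbors}(i) gives $v<_1 y_2$; this follows from $y_2\notin N_Y(a)$ for the distinguished $a$ of item 1.5 provided that $a$ is taken to be $a_r$ (so that the nested neighbourhoods propagate the conclusion to every $a_i$). Make that choice explicit.
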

\begin{proof}
Now suppose that there is a rigid pair $\{ay_1,x_2y_2\}$. By Proposition \ref{prop:partial-Ord-X}, we know that $a<_X x_2$, and thus $y_1<_Y y_2$. Then by 1.3 and 1.6 of Definition \ref{def:linOrd-type2}, we have $a<_1 x_2<_1 y_1<_1 y_2$. Since $a\in N(y_2)$ then $x_2y_1\in\C_1$. We prove that $ay_2\notin\C_1$. We know by 1.2 of Definition \ref{def:linOrd-type2} that $a<_1 x$ for all vertices $x\in X_a\cap X_b$. This implies that neither $y_2$ nor any vertex $w\in Y$ with $y_2<_1 w$ has a neighbor $u\in X_a\cap X_b$ with $u<_1 a$. Moreover by (1) of Proposition \ref{prop:ab-Y-neighbors} we know that $a$ has no neighbor $w\in Y$ with $y_2<_1 w$. This proves that $ay_2\notin\C_1$. 

Now suppose that there is a rigid pair $\{x_1y_1,by_2\}$. Then, by Proposition \ref{prop:partial-Ord-X,} we have $x_1<_X b$, and thus $y_1<_Y y_2$. By 2.1, 2.3, and 2.5 of Definition \ref{def:linOrd-type2} we have $b<_2 x_1<_2 y_1<_2 y_2$. Since $b\in N(y_1)$ we have $x_1y_1\in\C_2$. We prove $by_2\notin\C_2$. By Proposition \ref{prop:ab-Y-neighbors}, for all $y_b\in N_Y(b)$, we have that $y_b<_2 y$. Moreover, if $x$ and $b$ are related in $<_X$ then, by Proposition \ref{prop:partial-Ord-X}, $x<_X b$. Then by 2.1 of Definition \ref{def:linOrd-type2}, $b<_2 x$. Therefore, if $x<_2 b$ then we must have $N_Y(x)\subseteq N_Y(b)$. This implies that, for all vertices $x<_2 b$, we have $N_Y(x)<_2 y_2$. Therefore $b$ and $y_2$ are not between two adjacent vertices in $<_2$, and thus $by_2\notin\C_2$. This finishes the proof of the lemma. 
\end{proof}

As we mentioned earlier, to prove that $<_1$ and $<_2$ satisfy Equation \ref{eq:2-dim-geo},
,we need to show that completions $\C_1$ and $\C_2$ have empty intersection.
By Corollary \ref{cor:chords-notin-C1C2-type2} and Lemma \ref{lem:chords-notin-C1C2-typ-2-2}, we know that non-edges which correspond to chords of rigid pairs belong to at most one completion $\C_1$ or $\C_2$. Moreover, recall that non-edges of form $ab$ do not belong to $\C_1$. Therefore, to prove that $\C_1\cap\C_2=\emptyset$ we only need to show that non-edges corresponding to isolated vertices of $\tilde{G}$ belong to at most one of the completion $\C_1$ or $\C_2$.
\begin{lemma}\label{lem:type2-isolated-linOrd1,2}
Let $G$ be as in Assumption \ref{ass:sufficiency-cond}. Suppose $<_1$ and $<_2$ are linear orders as in Definition \ref{def:linOrd-type2}. Suppose $uw$ is an isolated vertex of $\tilde{G}$, and $u\in X_a\cap X_b$. 
%
\begin{itemize}
\item[(i)] For all $y\in N_Y(u)$ we have $y<_2 w$.
\item[(ii)] For all $x\in X_a\cup X_b$ with $x<_2 u$ we have $N_Y(x)\subseteq N_Y(u)$.
\end{itemize}
\end{lemma}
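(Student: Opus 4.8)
The plan is to convert the hypothesis ``$uw$ is an isolated vertex of $\tilde G$'' into a statement about neighbourhoods, and then read (i) and (ii) off the defining clauses of $<_2$ in Definition \ref{def:linOrd-type2}.

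First I would record what isolation means here. Since $u\in X_a\cap X_b$ is adjacent to every vertex of $X$ and $uw\notin E$, the non-edge $uw$ is a chord of a rigid pair if and only if there exist $z\in N_X(w)$ and $y\in N_Y(u)$ with $z\nsim y$: in that case $\{uy,zw\}$ is an induced $4$-cycle $u\,y\,w\,z$ (the edge $u\sim z$ is automatic, $y\sim w$ since $Y$ is a clique, and $uw,\ zy$ are its chords), so $uw\sim^* zy$. Hence ``$uw$ isolated'' is equivalent to: $N_X(w)\subseteq N_X(y)$ for every $y\in N_Y(u)$ (equivalently, $N_Y(u)\subseteq N_Y(z)$ for every $z\in N_X(w)$). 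Given this, (i) is immediate: for $y\in N_Y(u)$ we get $N_X(w)\subseteq N_X(y)$, so part 2.3 of Definition \ref{def:linOrd-type2} (which puts $y<_2 y'$ whenever $N_X(y')\subseteq N_X(y)$) yields $y<_2 w$.

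For (ii) I would argue by contradiction: suppose $x\in X_a\cup X_b$ has $x<_2 u$ but $N_Y(x)\nsubseteq N_Y(u)$. Since $u\in X_a\cap X_b$, clause 2.2 cannot be responsible for $x<_2 u$, so by 2.1 of Definition \ref{def:linOrd-type2} we must have $u<_X x$; thus there is a rigid pair $\{uy_1,xy_2\}$ with $f(uy_2)=\mathrm{red}$, so $y_1\in N_Y(u)\setminus N_Y(x)$, $y_2\in N_Y(x)\setminus N_Y(u)$, and $u\sim x$. Two reductions follow quickly. First, $xw\notin E$: otherwise $\{uy_1,xw\}$ is a rigid pair (using $u\sim x$, $uw\notin E$, $xy_1\notin E$, $y_1\sim w$), whose chord $uw$ would be non-isolated, a contradiction. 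Second, $x\notin X_a\setminus X_b$: a rigid pair between $u$ and $x$ makes $N_Y(u)$ and $N_Y(x)$ non-nested, which clashes with Proposition \ref{prop:partial-Ord-X} and the antisymmetry of $<_X$ on $X_a$ (Proposition \ref{prop:reflexive-anti-version2}). So $x\in X_a\cap X_b$ or $x\in X_b\setminus X_a$, and in each case I would combine the reformulation of isolation ($N_X(w)\subseteq N_X(y_1)$ since $y_1\in N_Y(u)$), the rigid-free conditions for $X_a\setminus X_b$ guaranteed by Assumption \ref{ass:sufficiency-cond}, and the fact that $f$ colours $\mathcal{A}$ red and $\mathcal{B}$ blue in the bipartite graph $\tilde G$, to build from $\{uy_1,xy_2\}$ and the neighbours of $w$ an auxiliary rigid pair whose chords force either a colour clash or a violation of a rigid-free condition.

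The main obstacle is precisely this last step of (ii). One must chase the chords that get produced ($uy_2$, $xy_1$, and the chords of the auxiliary rigid pairs built over $N_X(w)$), keep track of which of them are $a$-vertices or $b$-vertices, and check that every sub-case closes; the sub-case $x\in X_b\setminus X_a$ is the most delicate, and it is there that the rigid-free hypothesis on $X_a\setminus X_b$ and the trichotomy of Proposition \ref{prop:partial-Ord-Y} do the real work.
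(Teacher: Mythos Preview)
Your treatment of (i) is correct and in fact cleaner than the paper's. The paper argues by contradiction, assuming $w<_2 y$ and running through the trichotomy of Proposition~\ref{prop:partial-Ord-Y} to manufacture a rigid pair with chord $uw$. You instead observe directly that isolation of $uw$ (with $u\in X_a\cap X_b$, hence adjacent to all of $X$) forces $N_X(w)\subseteq N_X(y)$ for every $y\in N_Y(u)$, and then clause~2.3 of Definition~\ref{def:linOrd-type2} gives $y<_2 w$ in one step. Both are valid; yours is shorter.

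For (ii), however, you diverge sharply from the paper and leave the argument unfinished. The paper's proof of (ii) is two lines: it asserts that since $uw$ is not a chord of any rigid pair, $N_Y(u)$ and $N_Y(x)$ are nested for \emph{every} $x\in X_a\cup X_b$; then $N_Y(u)\subseteq N_Y(x)$ would give $u<_2 x$ via clause~2.1, contradicting $x<_2 u$, so $N_Y(x)\subseteq N_Y(u)$. It is worth noting that the paper's ``Therefore'' is not actually justified by the definition of a rigid pair alone --- your own reformulation shows that isolation of $uw$ only yields $N_Y(u)\subseteq N_Y(x)$ for $x\in N_X(w)$, not for arbitrary $x$ --- so your instinct that the case $u<_X x$ needs separate attention is well founded. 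But the programme you sketch for handling it (colour-chasing through $\mathcal{A}$ and $\mathcal{B}$, invoking rigid-free conditions, splitting on whether $x\in X_a\cap X_b$ or $x\in X_b\setminus X_a$) is far more elaborate than anything the paper attempts, and you explicitly do not carry it through: the ``main obstacle'' paragraph is a plan, not a proof. So as it stands, (ii) is a genuine gap in your proposal. The paper closes it by fiat; you identify the difficulty correctly but do not resolve it.
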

\begin{proof}
First we prove (i). Let $uw$ be an isolated vertex of $\tilde{G}$. By Definition of $\tilde{G}$, we know that $uw$ is not a chord of any rigid pair of $G$. We first prove (i). Suppose by contradiction that $y\in N_Y(u)$ and $w<_2 y$. By Proposition \ref{prop:partial-Ord-Y} there are three possible cases: 
\begin{itemize}
\item[(1)] $N_X(y)\subseteq N_X(w)$. We have $u\in N_X(y)\setminus N_X(w)$. This implies that $N_X(y)\not\subseteq N_X(w)$ in $G$.
\item[(2)] $N_X(w)\setminus N_X(y)\subset X_b\setminus X_a$ and $N_X(y)\setminus N_X(w)\subseteq X_a\setminus X_b$. But $u\in N_X(y)\setminus N_X(w)$ and $u\in X_a\cap X_b$. This contradicts $N_X(y)\setminus N_X(w)\subseteq X_a\setminus X_b$. 
\item[(3)] $y<_Y w$. This implies that there is a rigid pair $\{xw,x'y\}$. Since $u\in N_X(y)\setminus N_X(w)$ then $\{xw,uy\}$ is also a rigid pair with chords $xy$ and $uw$. This contradicts the fact that $uw$ is an isolated vertex of $\tilde{G}$, and thus for all $y\in N_Y(u)$ we have $y<_1 w$.
\end{itemize}
We now prove (ii). We know that $uw$ is not chord of any rigid pair of $G$. Therefore, by definition of rigid pair, for any $x\in X_a\cup X_b$ either $N_Y(x)\subseteq N_Y(u)$ or $N_Y(u)\subseteq N_Y(x)$. If $N_Y(u)\subseteq N_Y(x)$ then by 2.1 of Definition \ref{def:linOrd-type2} $u<_2 x$. But $x<_2 u$, and thus $N_Y(x)\subseteq N_Y(u)$. 
\end{proof}
\begin{corollary}\label{cor:type-2-isolated-linOrd1,2}
Let $G$ and be as in Assumption \ref{ass:sufficiency-cond}, and $<_1$ and $<_2$ be as in Definition \ref{def:linOrd-type2}. Suppose $\C_1$ and $\C_2$ are completions corresponding to $<_1$ and $<_2$, respectively. Then isolated vertices, $uw$, of $\tilde{G}$ with $u\in X_b$ do not belong to $\C_2$. Moreover, isolated vertices of $\tilde{G}$ of form $aw$, with $a\in X_a\setminus X_b$ do not belong to $\C_1$.
\end{corollary}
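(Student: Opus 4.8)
The plan is to unwind Definition \ref{def:completion} and then exploit the coarse shape of the orders $<_1$ and $<_2$ coming from Definition \ref{def:linOrd-type2}, together with Lemma \ref{lem:type2-isolated-linOrd1,2} and its analogue on the tail $X_b\setminus X_a$. First I would record the following reduction. Suppose $uw$ is a non-edge of $G$ with $u\in X_a\cup X_b$ and $w\in Y$, and suppose $uw\in\C_2$; by Definition \ref{def:completion} there are $p,q\in V(G)$ with $p\sim q$, $p<_2 u<_2 q$ and $p<_2 w<_2 q$. Since $2.5$ of Definition \ref{def:linOrd-type2} puts every vertex of $X_a\cup X_b$ before every vertex of $Y$ in $<_2$, the relation $p<_2 u$ forces $p\in X_a\cup X_b$ and the relation $w<_2 q$ forces $q\in Y$; hence $q\in N_Y(p)$. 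So to prove $uw\notin\C_2$ it suffices to show that no $p\in X_a\cup X_b$ with $p<_2 u$ has a neighbour $q\in N_Y(p)$ with $w<_2 q$. The same bookkeeping with $1.2$ and $1.6$ in place of $2.5$ shows that if $aw\in\C_1$ with $a=a_i\in X_a\setminus X_b$, then $p\in\{a_1,\dots,a_{i-1}\}$; moreover $q\notin X_a$ (else $q<_1 w$ by $1.6$) and $q\notin X_b\setminus X_a$ (not adjacent to $p$), so $q\in Y$ and $q\in N_Y(p)$ with $w<_1 q$.

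Two of the three resulting cases are then immediate. If $u\in X_a\cap X_b$, then $p<_2 u$ gives $N_Y(p)\subseteq N_Y(u)$ by part (ii) of Lemma \ref{lem:type2-isolated-linOrd1,2}, so $q\in N_Y(u)$, and then part (i) of the same lemma gives $q<_2 w$, contradicting $w<_2 q$; hence $uw\notin\C_2$. If $a=a_i\in X_a\setminus X_b$, then $p=a_{i'}$ with $i'<i$, and $q\in N_Y(p)=N_Y(a_{i'})\subset N_Y(a_i)$ by Assumption \ref{ass:sufficiency-cond}; since $w\notin N_Y(a_i)$ (as $aw$ is a non-edge), part (i) of Proposition \ref{prop:ab-Y-neighbors} gives $q<_1 w$, contradicting $w<_1 q$; hence $aw\notin\C_1$.

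The remaining case is $u=b_j\in X_b\setminus X_a$, for which I would prove the two statements that play, on the $X_b\setminus X_a$ side, the role of parts (ii) and (i) of Lemma \ref{lem:type2-isolated-linOrd1,2}: \textbf{(A)} every $p\in X_a\cup X_b$ with $p<_2 b_j$ satisfies $N_Y(p)\subseteq N_Y(b_j)$; and \textbf{(B)} $y<_2 w$ for every $y\in N_Y(b_j)$. Given (A) and (B) the case is done at once: $q\in N_Y(p)\subseteq N_Y(b_j)$ by (A), so $q<_2 w$ by (B), contradicting $w<_2 q$. For (A): by $2.1$ and $2.2$ of Definition \ref{def:linOrd-type2}, $p<_2 b_j$ can only arise from $b_j<_X p$ or $N_Y(p)\subseteq N_Y(b_j)$, and $b_j<_X p$ is impossible in each case for $p$ — part (3) of Proposition \ref{prop:partial-Ord-X} excludes $p\in X_a\setminus X_b$, nestedness of $N_Y$ on $X_b\setminus X_a$ excludes $p\in X_b\setminus X_a$ (no rigid pair exists), and for $p\in X_a\cap X_b$ the relation $b_j<_X p$ would colour the $b$-vertex chord of the witnessing rigid pair red, whereas that chord lies in $\mathcal{B}$ and is blue by Assumption \ref{ass:sufficiency-cond}. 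For (B): since $b_jw$ is an isolated vertex of $\tilde{G}$ it is the chord of no rigid pair; assuming $w<_2 y$ for some $y\in N_Y(b_j)$, apply Proposition \ref{prop:partial-Ord-Y} to the pair $w,y$, noting $b_j\in N_X(y)\setminus N_X(w)$ with $b_j\in X_b\setminus X_a$: the nested case forces $N_X(w)\subsetneq N_X(y)$ and hence $y<_2 w$ by $2.3$; the third alternative of Proposition \ref{prop:partial-Ord-Y} forces the labelling $N_X(w)\setminus N_X(y)\subseteq X_a\setminus X_b$, $N_X(y)\setminus N_X(w)\subseteq X_b\setminus X_a$, hence $y<_2 w$ by $2.4$; and in the rigid-pair alternative $\{x_1w,x_2y\}$, if $x_1\in X_b$ then $\{b_jy,x_1w\}$ is a rigid pair with chord $b_jw$, contradicting isolation, while if $x_1\in X_a\setminus X_b$ then $x_2\notin X_a\setminus X_b$ by nestedness, so the chord $x_1y$ is an $a$-vertex lying in $\mathcal{A}$, hence red, which by Definition \ref{def-proper-biordering-binate} forces $w<_Y y$ and therefore $y<_2 w$ by $2.3$ — and every alternative contradicts $w<_2 y$.

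The main obstacle is statement (B): it is exactly Lemma \ref{lem:type2-isolated-linOrd1,2} re-proved on the $X_b\setminus X_a$ side, and the delicate point is its rigid-pair branch, where one must invoke the forced colour (red) of the $a$-vertex chord, supplied by the $2$-colouring $f$ of Assumption \ref{ass:sufficiency-cond}, to rule out $w<_Y y$; this is precisely where the colouring hypothesis and the nested-neighbourhood hypotheses of Assumption \ref{ass:sufficiency-cond} are essential.
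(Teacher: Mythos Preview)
Your proof is correct and follows essentially the same route as the paper's: reduce via Definition~\ref{def:completion} and the block structure of $<_1,<_2$ to finding $p\in X_a\cup X_b$ and $q\in N_Y(p)$ with $p$ before $u$ and $q$ after $w$, then rule this out using Lemma~\ref{lem:type2-isolated-linOrd1,2} for $u\in X_a\cap X_b$, and the nested-neighbourhood structure plus Proposition~\ref{prop:ab-Y-neighbors} for $u\in (X_a\setminus X_b)\cup(X_b\setminus X_a)$.

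The one place you do more work than needed is your statement \textbf{(B)}. You identify it as ``the main obstacle'' and re-prove it from scratch via Proposition~\ref{prop:partial-Ord-Y}, but \textbf{(B)} is exactly Proposition~\ref{prop:ab-Y-neighbors}(ii): since $w\in Y\setminus N_Y(b_j)$, that proposition gives $y<_2 w$ for every $y\in N_Y(b_j)$ directly, with no appeal to the isolation of $b_jw$. You already invoke part (i) of the same proposition in the $a$-case, so the $b$-case is symmetric and equally short. The paper's proof does precisely this: it cites Proposition~\ref{prop:ab-Y-neighbors}(ii) for the analogue of \textbf{(B)} and asserts your \textbf{(A)} (which you prove carefully and the paper leaves implicit, relying on Remark~\ref{rem:help-Lin-2} and Proposition~\ref{prop:partial-Ord-X}). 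So your argument is more explicit on \textbf{(A)} and unnecessarily elaborate on \textbf{(B)}; the ``delicate'' rigid-pair branch you worry about never needs to be entered.
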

\begin{proof}
Let $uw$ be an isolated vertex of $\tilde{G}$. If $u\in X_a\cap X_b$, then, by Lemma \ref{lem:type2-isolated-linOrd1,2}, we know that $u$ and $w$ are not in between two adjacent vertices in linear order $<_2$. This implies that $uw\notin\C_2$. Now let $u\in X_b\setminus X_a$, and let $u=b$. By Proposition \ref{prop:ab-Y-neighbors}, we know that, for all $y_b\in N_Y(b)$ and for all $y\in Y\setminus N_Y(b)$,we have that $y_b<_2 y$, and thus $y_b<_2 w$. Moreover, for any $x<_2 b$, we know that $N_Y(x)\subseteq N_Y(b)$. This implies that $b$ and $w$ are not in between two adjacent vertices in $<_2$, and thus $bw\notin\C_2$.

Let $a\in X_a\setminus X_b$, and suppose that $u=a$. We know, by Proposition \ref{prop:ab-Y-neighbors}, that, for all $y_a\in N_Y(a)$ and for all $y\in Y\setminus N_Y(a)$, we have that $y_a<_2 y$, and thus $y_a<_2 w$. Moreover, by 1.2 of Definition \ref{def:linOrd-type2}, we know that for all $x\in X_a\cap X_b$, we have that $a<_1 x$. This implies that $a$ and $w$ are not in between two adjacent vertices in $<_1$, and thus $b_1w\notin\C_1$.
\end{proof}
We now use the obtained results to prove that $<_1$ and $<_2$ as in Definition \ref{def:linOrd-type2} are linear orders satisfying Equation \ref{eq:2-dim-geo}.
\begin{theorem}\label{thm:type-2-linOrd}
Let $G$ be as in Assumption \ref{ass:sufficiency-cond}, and $<_1$ and $<_2$ be as in Definition \ref{def:linOrd-type2}. Then relations $<_1$ and $<_2$ are linear orders which satisfy Equation \ref{eq:2-dim-geo}. 
\end{theorem}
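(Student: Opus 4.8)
The plan is to assemble the pieces already proved. By Lemma~\ref{lem:type-2-<1-linOrd} and Lemma~\ref{lem:type-2-<2-linOrd}, $<_1$ and $<_2$ are genuine linear orders on $V(G)$, so it remains only to show that they satisfy Equation~\eqref{eq:2-dim-geo}. By Lemma~\ref{lem:emptycompletion-general} this is equivalent to showing $\C_1\cap\C_2=\emptyset$, where $\C_1,\C_2$ are the completions of $<_1,<_2$. Recall from Section~\ref{sec:nec-codistion} that the non-edges of a $B_{a,b}$-graph split into three classes: (a) chords of rigid pairs, (b) non-edges of the form $ab$ with $a\in X_a\setminus X_b$, $b\in X_b\setminus X_a$, and (c) isolated vertices of $\tilde G$. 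So it suffices to verify that every non-edge of $G$ lies in at most one of $\C_1,\C_2$.

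First I would dispatch class (a). A rigid pair $\{x_1y_1,x_2y_2\}$ has either $x_1,x_2\in X_a\cap X_b$, or exactly one of its left endpoints in $X_a\setminus X_b$ or $X_b\setminus X_a$ (both cannot be in $X_a\setminus X_b$, nor both in $X_b\setminus X_a$, by the nestedness of those neighborhoods in Assumption~\ref{ass:sufficiency-cond}). In the first case Corollary~\ref{cor:chords-notin-C1C2-type2} tells us that each chord lies in at most one completion; in the second case Lemma~\ref{lem:chords-notin-C1C2-typ-2-2} gives the same conclusion (in fact that the two chords lie in \emph{different} completions). Hence no chord of a rigid pair is in $\C_1\cap\C_2$.

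Next, class (b): the non-edges $ab$. This is exactly what the asymmetric design of $<_1$ in Definition~\ref{def:linOrd-type2} was built to handle. By clause~1.2, all vertices of $X_a\setminus X_b$ precede every vertex of $X_a\cap X_b$, which in turn precede every vertex of $X_b\setminus X_a$, in $<_1$; and by clause~1.5 together with clause~1.6, for $a\in X_a\setminus X_b$ every $y_a\in N_Y(a)$ satisfies $y_a<_1 b_1\le_1 b$, while $b\in X_b\setminus X_a$ and every $y\in Y$ with $b<_1 y$ lies past all of $X_b\setminus X_a$. So I would argue that $a$ and $b$ are never squeezed strictly between two adjacent vertices in $<_1$: any common neighbor of a potential witnessing pair on one side of $\{a,b\}$ lies in $X_a\cap X_b$ or in $N_Y$, and the clauses above force those neighbors to the correct side. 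Concretely, if $u<_1 a<_1 v$ and $u<_1 b<_1 v$ with $u\sim v$, then $u\in X_a\setminus X_b$ (the only vertices $<_1 a$) and $v$ lies past $b$, so $v\in X_b\setminus X_a\cup(Y\setminus N_Y(a))$; a short case check using 1.5 shows $u\not\sim v$, a contradiction. Hence $ab\notin\C_1$ for every such non-edge, so class (b) contributes nothing to $\C_1\cap\C_2$.

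Finally, class (c): isolated vertices $uw$ of $\tilde G$, where by convention $u\in X$, $w\in Y$. If $u\in X_b$ (including $u\in X_b\setminus X_a$), Corollary~\ref{cor:type-2-isolated-linOrd1,2} gives $uw\notin\C_2$; if $u\in X_a\setminus X_b$, the same corollary gives $uw\notin\C_1$. The only remaining possibility is $u\in X_a\cap X_b$, which is covered by $u\in X_b$ as well, so in every case $uw$ avoids at least one completion. Combining the three classes: every non-edge of $G$ lies in at most one of $\C_1,\C_2$, hence $\C_1\cap\C_2=\emptyset$, and by Lemma~\ref{lem:emptycompletion-general} the orders $<_1,<_2$ satisfy Equation~\eqref{eq:2-dim-geo}. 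The main obstacle I anticipate is the bookkeeping in class (b): one must be careful that the clauses of Definition~\ref{def:linOrd-type2} really do prevent \emph{any} adjacent pair from straddling $\{a,b\}$ in $<_1$, including mixed pairs with one endpoint in $X_a\cap X_b$ and one in $Y$, and including the boundary cases where $N_Y(a)$ and $N_Y(b)$ overlap or nest — this is where Remark~\ref{rem:help-Lin-2} and Proposition~\ref{prop:ab-Y-neighbors} will have to be invoked carefully.
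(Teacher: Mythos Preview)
Your proposal is correct and follows essentially the same approach as the paper: invoke Lemmas~\ref{lem:type-2-<1-linOrd} and~\ref{lem:type-2-<2-linOrd} for linearity, reduce via Lemma~\ref{lem:emptycompletion-general} to $\C_1\cap\C_2=\emptyset$, and then handle the three classes of non-edges using Corollary~\ref{cor:chords-notin-C1C2-type2}, Lemma~\ref{lem:chords-notin-C1C2-typ-2-2}, Corollary~\ref{cor:type-2-isolated-linOrd1,2}, and the fact that $ab\notin\C_1$. The only difference is that the paper simply asserts $ab\notin\C_1$ (referring back to the design discussion after Definition~\ref{def:linOrd-type2}), whereas you sketch the verification explicitly; your case analysis there is on the right track and your caveat about the bookkeeping is well placed.
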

\begin{proof}
Let $G$ be as in Assumption \ref{ass:sufficiency-cond}, and linear orders $<_1$ and $<_2$ be as in Definition \ref{def:linOrd-type2}. By Lemmas \ref{lem:type-2-<1-linOrd} and \ref{lem:type-2-<2-linOrd}, we know that $<_1$ and $<_2$ are linear orders. Let $\C_1$ and $\C_2$ be completions of $<_1$ and $<_2$, respectively. By Corollary \ref{cor:chords-notin-C1C2-type2} and Lemma \ref{lem:chords-notin-C1C2-typ-2-2}, we know that non-edges that are chords of a rigid pair belong to different completions. Moreover, by Corollary \ref{cor:type-2-isolated-linOrd1,2} we know that non-isolated vertices of $\tilde{G}$ belong to at most one completion $\C_1$ or $\C_2$. Also, $ab\notin\C_1$. This implies that $\C_1\cap\C_2=\emptyset$, and we are done.
 \end{proof}
\section{Conditions of Theorems \ref{thm:chp3-main-result} and \ref{thm:suff} can be checked in polynomial-time}
\label{sec:chp3:main-algorithm}

In this section, for a $B_{a,b}$-graph $G$, as given in Assumption \ref{ass:1}, we show that conditions of Theorems \ref{thm:chp3-main-result} and \ref{thm:suff} can be checked in $n^4$ steps, where $n$ is the order of the graph $G$. 
In order to check conditions of Theorems \ref{thm:chp3-main-result} and \ref{thm:suff}, first we should construct the chord graph of a $B_{a,b}$-graph $G$. 

The vertices of $\tilde{G}$ are the non-edges of $G$. A non-edge $x_1y_2$ is adjacent with another non-edge $x_2y_1$ if and only if (1) $x_1\sim x_2$  and (2) $y_1\in N_Y(x_1)$ and $x_2\in N_X(y_2)$. Therefore, to form the chord graph of $G$ first for all $x\in X_a\cup X_b$ and all $y\in Y$ we find $N_Y(x)$ and $N_X(y)$, respectively. Let $x_1y_2$ be a non-edge of $G$. Let $\mathcal{E}$ be the set of all $xy$ such that $x\in N_X(y_2)\cap N(x_1)$ and $y\in N_Y(x_1)$. The neighborhood of $x_1y_2$ in $\tilde{G}$ is consist of all $xy\in \mathcal{E}\cap E(G^c)$. Since the order of $G$ is $n$, we can form $\tilde{G}$ in at most $n^4$ steps.

We now discuss the rigid-free conditions. Recall that a vertex $u\in V(G)$ is rigid-free with respect to $S\subseteq V(G)$ if there is no rigid pair $\{x_1y_1,x_2y_2\}$ such that $x_1, x_2\in S$ and $y_1, y_2\in N_Y(u)$. 

For each non-isolated vertex, $xy$, of $\tilde{G}$ let $y\in R(X_a)$ if $x\in X_a$, and $y\in R(X_b)$ if $x\in X_b$. Now $a\in X_a\setminus X_b$ is rigid-free with respect to $\{x_1,x_2\}$ and $\{x,b\}$, where $x_1, x_2, x\in X_b$ and $b\in X_b\setminus X_a$, if and only if $|N_Y(a)\cap R(X_b)|\leq 1$. Similarly, $b\in X_b\setminus X_a$ is rigid-free with respect to $\{x_1,x_2\}$ and $\{x,a\}$, where $x_1, x_2, x\in X_a$ and $a\in X_a\setminus X_b$, if and only if $|N_Y(b)\cap R(X_a)|\leq 1$. Since we can find $R(X_a)$ and $R(X_b)$ in at most $n^4$ steps, rigid-free conditions can be checked in at most $n^4$ steps.

%
Now we will look into the coloring conditions of Theorems \ref{thm:chp3-main-result} and \ref{thm:suff}. First to check whether $\tilde{G}$ is bipartite we perform a BFS to properly color its vertices with two colors. Since $\tilde{G}$ is connected if its bipartite then it has only one possible 2-coloring. If the process of 2-coloring of $\tilde{G}$ fails then $G$ is not square geometric. Moreover, if the set $\A$ and the set $\B$ do not belong to different color classes or either of $\A$ or $\B$ has vertices in both color classes then again $G$ is not square geometric (Theorem \ref{thm:chp3-main-result}). 

We now discuss the condition of Theorem \ref{thm:suff} which requires the orderings $<_X$ and $<_Y$ associated to the proper 2-coloring of $\tilde{G}$ to be partial orders. We already now that orderings $<_X$ and $<_Y$ are reflexive and antisymmetric. To check the transitivity we perform the following steps. For each vertex $v\in X_a\cup X_b$ we define $Out(v)$ to be the set of vertices $u\in X_a\cup X_b$ such that $v<_X u$. For a vertex $v\in Y$, $Out(v)$ is defined similarly. Then $<_X$ and $<_Y$ are transitive whenever for all $v\in V(G)$ the statement \lq\lq for all $u\in Out(v)$, we have $Out(u)\subset Out(v)$''. Since $|V(G)|=n$ then the transitivity of $<_X$ and $<_Y$ can be checked in at most $n^2$ steps. 
\bibliographystyle{plain}
\bibliography{B-Square-Oct2016.bib}

\end{document}